\documentclass[hidelinks,12pt]{article}
\usepackage[a4paper, margin=2.5cm]{geometry} 

\usepackage{latexsym,amssymb,graphicx,amsthm,amsmath,mathtools,caption,calc,transparent,float,dsfont,textcomp,comment,bm,enumitem,realhats}
\usepackage[backend=biber,style=alphabetic,giveninits=true]{biblatex}
\usepackage[justification= centering]{caption}
\usepackage{hyperref}

\numberwithin{figure}{section}
\numberwithin{equation}{section}

\AtEveryBibitem{
 \clearlist{address}
 \clearfield{date}
 \clearfield{eprint}
 \clearfield{isbn}
 \clearfield{issn}
 \clearlist{location}
 \clearfield{month}
 \clearfield{series}
 \clearfield{doi}
 \clearfield{url}
 
 \ifentrytype{book}{}{
  \clearlist{publisher}
  \clearname{editor}
 }
}

\newtheorem{theorem}{Theorem}[section]
\newtheorem{proposition}[theorem]{Proposition}
\newtheorem{lemma}[theorem]{Lemma}
\newtheorem{corollary}[theorem]{Corollary}
\newtheorem{claim}{Claim}[theorem]
\theoremstyle{definition}
\newtheorem{definition}[theorem]{Definition}

\theoremstyle{remark}
\newtheorem{remark}[theorem]{Remark}

\newcommand{\half}{\frac{1}{2}}
\newcommand{\nats}{\mathbb N}

\newcommand{\reals}{\mathbb R}

\newcommand{\eps}{\varepsilon}
\newcommand{\ph}{\varphi}
\newcommand{\setin}{\subseteq}
\newcommand{\del}{\partial}

\makeatletter
\newcommand*\twiddle[1]{\mathpalette\wthelper{#1}}
\newcommand*\wthelper[2]{%
        \hbox{\dimen@\accentfontxheight#1%
                \accentfontxheight#11.3\dimen@
                $\m@th#1\widetilde{#2}$%
                \accentfontxheight#1\dimen@
        }%
}

\newcommand*\accentfontxheight[1]{%
        \fontdimen5\ifx#1\displaystyle
                \textfont
        \else\ifx#1\textstyle
                \textfont
        \else\ifx#1\scriptstyle
                \scriptfont
        \else
                \scriptscriptfont
        \fi\fi\fi3
}
\makeatother

\DeclareMathOperator{\lap}{\Delta}
\newcommand{\lie}{\mathcal{L}}

\newcommand{\heatop}{\left(\del_t-\lap_g\right)}

\newcommand{\mfd}{\mathcal{M}}
\newcommand{\metsp}{\mathcal{X}}

\DeclareMathOperator{\ric}{Ric}

\DeclareMathOperator{\riem}{Rm}
\DeclareMathOperator{\sectional}{Sec}
\DeclareMathOperator{\vol}{vol}
\DeclareMathOperator{\inj}{inj}

\title{On the uniqueness of Ricci flows from certain Alexandrov spaces}
\author{Laura Flower}
\date{2 October 2026}

\begin{document}

\maketitle

\begin{abstract}
    We consider the problem of uniqueness of Ricci flows, starting from singular initial data. Even when the initial data is smooth and smoothly obtained, in the complete noncompact case uniqueness is not known in full generality. When starting from nonsmooth initial data, very little is known, even in the compact case.
    
    We obtain a uniqueness result for Ricci flows with scaling-invariant curvature bounds, a suitable noncollapsing assumption, and sectional curvature lower bounds, coming out of (possibly singular) Reifenberg spaces.
    
    As a consequence of our result, and existence results obtained by Simon--Topping and Lai, there is a canonical way to define a smooth structure on any Reifenberg uniformly volume-noncollapsed PIC2 limit space. Further, we obtain a diffeomorphic version of Perelman's stability theorem for these limit spaces.
\end{abstract}

\section{Introduction}\label{SEC introduction}

We say that a smooth family of Riemannian metrics $(g(t))_{t\in(0,T]}$ on a manifold $\mfd$ is a \emph{Ricci flow} if
\[
\del_tg(t)=-2\ric(g(t))
\]
for $t\in(0,T]$. Often, we might expect to see a Ricci flow defined for $t\in[0,T]$, and in that case we consider the metric $g(0)$ to be the initial data of the flow. However, it is also of interest to consider a flow that attains some less regular choice of initial data in a weaker sense. Throughout most of this paper, we simply ask that the distance function $d_{g(t)}$ converges locally uniformly to some distance function $d_0$ on $\mfd$ as $t\searrow0$. However, in our case this is implied by a weaker notion of convergence which is more useful for applications (see \S\ref{SEC consequences}).

We are interested in the case where the initial data is a \emph{uniformly non-collapsed limit space}, that is, a pointed Gromov--Hausdorff limit\footnote{For the definition of pointed Gromov--Hausdorff convergence which we use, see \cite[Definition 8.1.1]{MetricGeom}} of complete manifolds with a uniform lower bound on the volume of all unit balls and some notion of a uniform curvature lower bound. The existence of Ricci flows coming out of such spaces has been established in many cases. For example, in \cite{SimonToppingMollification} Simon and Topping obtain existence of complete Ricci flows with a $C/t$ curvature bound coming out of complete uniformly non-collapsed 3D Ricci limit spaces. In \cite{Lai}, Lai demonstrated that the same proof also works for uniformly non-collapsed $PIC_1$-limit spaces in any dimension, with suitable modifications incorporating estimates of Bamler, Cabezas-Rivas and Wilking \cite{BCRW}. These flows can then be used to show that given any such limit space, a locally bi-H\"older map can be constructed from the space to a smooth manifold, and in particular, to the smooth manifold given at any positive time along the flow (see \cite{SimonToppingMollification}, \cite{Lai}, \cite{McLeodTopping}).

The next question, then, is whether such Ricci flows must be unique, and our main result provides a partial answer to this.

Our main result requires the following Reifenberg condition.
\begin{definition}
    A metric space $(\metsp,d_\metsp)$ is \emph{uniformly Reifenberg} if for every $\eps>0$, there exists $r_\eps>0$ such that for any $p\in \metsp$ and $0<r\leq r_\eps$, we have $d_{GH}(B_{r^{-1}d_0}(p,1),B_{\reals^n}(0,1))<\eps$, where $d_{GH}$ is the Gromov--Hausdorff distance, and $\reals^n$ is equipped with the Euclidean metric.
\end{definition}
\begin{remark}
    If $\metsp$ is a compact volume-non-collapsed Ricci limit space, then this condition is equivalent to requiring all tangent cones of $\metsp$ to be Euclidean, by Bishop--Gromov volume comparison and Cheeger--Colding theory (see \cite[Lemma 2.1]{DSS2}).
\end{remark}
This means that limit spaces with cone points are not Reifenberg, for example the ``ice cream cone" example given by replacing a small spherical cap of a sphere with an appropriate choice of cone (see \cite[\S3.3]{ToppingRLSSurvey}). However, there are Reifenberg limit spaces which are not Riemannian manifolds, for example a ``flying saucer", constructed by identifying two copies of a spherical cap along their boundaries, to form a ``seam". Such seams can also meet each other, provided they satisfy a suitable angle condition, as in the construction one might use to approximate a sphere by sewing together some flat panels of fabric. See Figure \ref{PIC Reifenberg examples} for pictures of these examples.

\begin{figure}[ht]
   \centering
   \def\svgscale{1}
\begingroup%
  \makeatletter%
  \providecommand\color[2][]{%
    \errmessage{(Inkscape) Color is used for the text in Inkscape, but the package 'color.sty' is not loaded}%
    \renewcommand\color[2][]{}%
  }%
  \providecommand\transparent[1]{%
    \errmessage{(Inkscape) Transparency is used (non-zero) for the text in Inkscape, but the package 'transparent.sty' is not loaded}%
    \renewcommand\transparent[1]{}%
  }%
  \providecommand\rotatebox[2]{#2}%
  \newcommand*\fsize{\dimexpr\f@size pt\relax}%
  \newcommand*\lineheight[1]{\fontsize{\fsize}{#1\fsize}\selectfont}%
  \ifx\svgwidth\undefined%
    \setlength{\unitlength}{405.52626242bp}%
    \ifx\svgscale\undefined%
      \relax%
    \else%
      \setlength{\unitlength}{\unitlength * \real{\svgscale}}%
    \fi%
  \else%
    \setlength{\unitlength}{\svgwidth}%
  \fi%
  \global\let\svgwidth\undefined%
  \global\let\svgscale\undefined%
  \makeatother%
  \begin{picture}(1,0.22740244)%
    \lineheight{1}%
    \setlength\tabcolsep{0pt}%
    \put(0,0){\includegraphics[width=\unitlength,page=1]{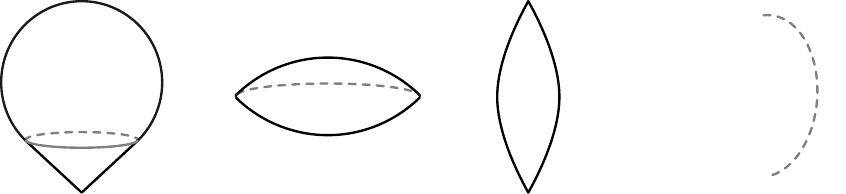}}%
    \put(0.64666424,0.0109429){\makebox(0,0)[lt]{\lineheight{1.25}\smash{\begin{tabular}[t]{l}$\times6$\end{tabular}}}}%
    \put(0,0){\includegraphics[width=\unitlength,page=2]{Reifenberg_examples.pdf}}%
    \put(0.62453249,0.17351199){\makebox(0,0)[t]{\lineheight{1.25}\smash{\begin{tabular}[t]{c}$\frac{\pi}{3}$\end{tabular}}}}%
    \put(0,0){\includegraphics[width=\unitlength,page=3]{Reifenberg_examples.pdf}}%
  \end{picture}%
\endgroup%

   \caption{A non-example and two examples of Reifenberg spaces: \\ an ``ice-cream cone", a ``flying saucer", and a ``six-panel sphere".}
   \label{PIC Reifenberg examples}
\end{figure}

We are now ready to state our main theorem.

\begin{theorem}\label{PROP Reifenberg uniqueness}
    Suppose $g(t)$, $\twiddle{g}(t)$, $t\in(0,T]$ are complete Ricci flows defined on smooth manifolds $\mfd$, $\twiddle{\mfd}$, with $d_{g(t)}\rightarrow d_0$, $d_{\twiddle{g}(t)}\rightarrow \twiddle{d}_0$ locally uniformly as $t\searrow0$ for some distance functions $d_0$, $\twiddle{d}_0$ on $\mfd$, $\twiddle{\mfd}$, which satisfy $d_0=\ph^*\twiddle{d}_0$ for some homeomorphism $\ph:\mfd\rightarrow\twiddle{\mfd}$. Suppose further that $g(t)$, $\twiddle{g}(t)$ satisfy 
    \begin{equation}\label{EQN assumptions inj}
        |\riem|\leq\frac{\alpha}{t}, \hspace{5ex} \inj\geq\sqrt{\frac{t}{\alpha}}, \hspace{5ex} \sectional\geq-1
    \end{equation}
    for some $\alpha>0$. If $(\mfd,d_0)$ is uniformly Reifenberg, then $\ph$ is a diffeomorphism and 
    \[
    g(t)\equiv\ph^*\twiddle{g}(t).
    \]
\end{theorem}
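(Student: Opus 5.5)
We will follow the by-now standard strategy for uniqueness from rough initial data: construct a harmonic map heat flow (Ricci–DeTurck gauge) between the two flows that starts from the homeomorphism $\ph$, show that it remains a uniformly bi-Lipschitz, almost-isometric map with good estimates as $t\searrow0$, and then run a maximum principle/energy argument on the difference of the two metrics in this gauge to conclude they agree. Concretely, we look for maps $\ph_t:\mfd\to\twiddle{\mfd}$, $t\in(0,T]$, solving the harmonic map heat flow $\del_t\ph_t=\lap_{g(t),\twiddle{g}(t)}\ph_t$ with $\ph_t\to\ph$ locally uniformly as $t\searrow0$. The pulled-back metrics $h(t)=(\ph_t)^*\twiddle{g}(t)$ then satisfy the Ricci--DeTurck flow relative to $g(t)$, and the quantity $|h-g|_{g}$ satisfies a parabolic equation whose structure gives uniqueness once we know it is small and decays suitably near $t=0$.

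The key steps, in order, are as follows. First, use the Reifenberg hypothesis together with $|\riem|\le\alpha/t$, $\inj\ge\sqrt{t/\alpha}$ and the locally uniform convergence $d_{g(t)}\to d_0$ to show that on the scale $\sqrt t$ both flows are $\eps(t)$-close to Euclidean, with $\eps(t)\to0$: the parabolic rescaling by $t^{-1}$ gives bounded geometry, and Reifenberg closeness of $d_0$ at scale $\sqrt{t}$ (with $r_\eps$) plus distance distortion estimates (which use $\sectional\ge-1$ and the $\alpha/t$ bound, e.g. $d_{g(s)}-C\sqrt{\alpha}(\sqrt t-\sqrt s)\le d_{g(t)}$) passes this to $g(t)$, and smooth convergence upgrades it to $|\riem_{g(t)}|\le\eps(t)/t$. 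Second, for each small $s>0$ build an approximate starting map $\ph^s$ from $(\mfd,g(s))$ to $(\twiddle{\mfd},\twiddle{g}(s))$: in harmonic/Reifenberg charts at scale $\sqrt s$ the map $\ph$ is $\eps$-GH-close to an isometry, so mollifying (e.g. center-of-mass averaging) produces a smooth map that is $(1+\eps)$-bi-Lipschitz with scale-invariant derivative bounds. Third, run harmonic map heat flow from $\ph^s$ starting at time $s$ and prove a priori estimates on $[s,T]$ independent of $s$: $|h-g|_{g}\le\eps$ and $|\nabla^k(h-g)|\le C_k\eps\, t^{-k/2}$, via a pseudolocality-type/continuity argument using the smallness $\eps(t)$ from step one (this is where Reifenberg, rather than merely non-collapsed, is needed, since smallness of $|h-g|$ in the DeTurck equation requires the two flows to be nearly isometric at every scale). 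Taking $s\to0$ yields a limit flow $\ph_t$ with $\ph_t\to\ph$ as $t\to0$; in particular $\ph$ is a locally uniform limit of $(1+\eps(t))$-bi-Lipschitz maps.

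Finally, with the gauge in hand, we show $h\equiv g$. The difference $k=h-g$ satisfies $\heatop k = \riem * k + g^{-1}*\nabla k*\nabla k+\dots$, and with $|k|\le\eps(t)\to0$, $|\riem|\le\alpha/t$, a weighted maximum principle for $t^{-\beta}|k|^2$ (using a cutoff built from the distance functions, justified by completeness and $\sectional\ge-1$) gives $|k|\le C t^{\beta}$ for every $\beta$, hence, via the ODE comparison, $k\equiv0$. Then $\ph_t^*\twiddle{g}(t)=g(t)$, so $\ph_t$ is an isometry, thus harmonic, so $\del_t\ph_t=0$; $\ph_t$ is constant in $t$ and equals its limit $\ph$, which is therefore a diffeomorphism with $g(t)\equiv\ph^*\twiddle{g}(t)$. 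The main obstacle I expect is the third step: obtaining $s$-independent estimates for the harmonic map heat flow starting near a merely continuous map, i.e. showing the quantity $|h-g|$ stays below the threshold needed for the DeTurck estimates all the way down to $t=0$. I would first try to get this by a contradiction/blow-up argument: if the closeness failed at a first time, rescale parabolically there, use the bounded geometry and compactness to extract limits which, by the Reifenberg property, are both flat $\reals^n$ with the limit map an isometry, contradicting the failure.
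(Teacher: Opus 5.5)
Your architecture matches the paper, and center-of-mass smoothing is an acceptable substitute for the Katsuda embedding. That architecture is: Reifenberg $\Rightarrow$ $|\riem|\le\alpha(t)/t$ with $\alpha(t)\to0$ $\Rightarrow$ $\ph$ is an $o(\sqrt{t})$-almost isometry at scale $\sqrt{t}$; smooth it at time $s$; run harmonic map heat flow from $s$; let $s\searrow0$. The genuine gap is the last step.

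All you feed into it is $|k|\le\eps(t)\to0$ and the scalar inequality $(\del_t-\widehat{g}^{ij}\twiddle{\nabla}_i\twiddle{\nabla}_j)|k|^2\le C_n|\riem|\,|k|^2\le C_n\alpha(t)t^{-1}|k|^2$. Spatially constant functions such as $t^{C_n\alpha}$ satisfy this with equality. So does $(\log(1/t))^{-C'}$ when $\alpha(t)\sim C/\log(1/t)$, and the Reifenberg hypothesis gives no rate for $\alpha(t)$. These functions tend to $0$ as $t\searrow0$, are not identically zero, and need not decay polynomially.

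A maximum principle for $t^{-\beta}|k|^2$ only propagates a bound you already have near $t=0$: it needs $|k|^2=o(t^\beta)$ as input. Nothing in your construction supplies a rate, since the closeness at time $s$ is controlled only by the distortion $\sqrt{\alpha(s)}+s$ passed through a non-explicit smoothing modulus. So ``$|k|\le Ct^\beta$ for every $\beta$'' is unsupported, and the scalar argument cannot rule out $k\neq0$.

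Your Step 3 has a related limitation. The blow-up argument only excludes loss of closeness at times tending to $0$, where the rescaled limits are flat. At a fixed positive time the limit is just the original pair of flows with some limit gauge, and deriving a contradiction there would presuppose the uniqueness being proved.

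The missing ingredient is a quantitative, scale-invariant stability estimate for Ricci--DeTurck flow. The paper takes this from Chan--Lai--Lee (Theorem \ref{PROP CLL flow existence}): under \eqref{EQN assumptions inj}, $\delta$-closeness to $\twiddle{g}$ at the start propagates to $\Lambda\delta$-closeness on $[0,\min\{T,1\}]$ with $\Lambda=\Lambda(n,\alpha)$, and solutions in this class are unique. Because time-shifted flows still satisfy \eqref{EQN assumptions inj}, it applies from each start time $t_i$.

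In the proof of Theorem \ref{PROP flowing from t=0} this works as follows. The smoothed map $f_i$ gives closeness $\delta_i$ at $t_i$. Proposition \ref{PROP HMHF short time diffeo} shows $F_{i,t}$ is a diffeomorphism for a short time, so by uniqueness $(F_{i,t}^{-1})^*g(t)$ coincides with the Chan--Lai--Lee solution. The bound $\Lambda\delta_i$ then keeps $|dF_{i,t}|^2<2n$ and prevents $F_{i,t}$ from degenerating up to $\min\{T,1\}$. In the Reifenberg case $\delta_i\to0$, so the limit gauge satisfies $F_t^*\twiddle{g}(t)=g(t)$ exactly, with no separate ``$k\equiv0$'' step and no blow-up argument.

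A minor point: it is $(\ph_t^{-1})^*g(t)$ on $\twiddle{\mfd}$, not $\ph_t^*\twiddle{g}(t)$ on $\mfd$, that solves the DeTurck equation relative to $\twiddle{g}(t)$. The norms agree, so this only affects where the equation is written.
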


\begin{remark}
    For applications, we can replace most of the assumptions \eqref{EQN assumptions inj} with the assumption that the initial space is a uniformly volume non-collapsed limit space. We can also weaken the required convergence to the initial space to pointed Gromov--Hausdorff convergence; see \S\ref{SEC consequences}.
\end{remark}

\begin{remark}
    With the assumptions of Theorem \ref{PROP Reifenberg uniqueness}, the initial space $(\mfd,d_0)$ must be a uniformly non-collapsed limit space with a sectional curvature lower bound, and hence an Alexandrov space. However, in general we do not expect the existence of Ricci flows with sectional curvature lower bounds from all such spaces, because sectional curvature lower bounds are not preserved by the Ricci flow for dimensions $n>3$. Instead, in applications we must switch to the strictly stronger notion of a $PIC_2$ lower bound to ensure existence (see \S\ref{SEC consequences}).
\end{remark}

\subsection{Existing results}

In dimension two, the question of uniqueness is already settled. In the smooth case, 
Topping showed in \cite{Topping2D} that we get uniqueness under only the assumption that the flow is instantaneously complete. In the nonsmooth case, Topping and Yin obtained uniqueness of 2D Ricci flows starting from any non-atomic Radon measure in \cite{ToppingYin2D}.

However, in higher dimensions, much less is known.

In the compact case, uniqueness was demonstrated in the case of limit spaces with two-sided sectional curvature bounds by Corro, Moreno, and Zarei in \cite{UniquenessDoubleCurvatureBound}, provided the flows also have two-sided sectional curvature bounds.

Meanwhile, in the complete noncompact case, the question of uniqueness is nontrivial even when the initial data is smooth and smoothly obtained. In this setting, uniqueness of such flows with two-sided curvature bounds was obtained by Chen and Zhu in \cite{ChenZhu}, using a harmonic map heat flow strategy in which both flows are compared to a fixed reference metric. This strategy was streamlined by Kotschwar in \cite{Kotschwar} using energy methods, removing the need to consider the harmonic map heat flow at all. This further yielded uniqueness when both flows have a $C/t^\delta$ curvature bound for some $\delta\in(0,1/2)$, and with a volume growth condition at infinity.

In \cite{MCLeePaper}, Lee extended the strategy of Chen and Zhu to obtain uniqueness of such flows with a $C/t$ curvature bound, by using the harmonic map heat flow to directly compare the two flows. We will also make use of this strategy in order to establish the main result of this paper.

\subsection{Consequences of the main result}\label{SEC consequences}

In this section, we give some applications of Theorem \ref{PROP Reifenberg uniqueness}. For this, we will need to consider (uniformly non-collapsed) $PIC_2$ limit spaces. In particular, let $\mathcal{C}_{PIC_2}$ be the cone of weakly $PIC_2$ algebraic curvature operators (that is, algebraic curvature operators with non-negative complex sectional curvature; see \cite{ToppingPIC1Survey} for further exposition). We write $\riem\in\mathcal{C}_{PIC_2}$ if the curvature operator $\riem$ is weakly $PIC_2$. Taking $\mathrm{I}$ to be the identity curvature operator with scalar curvature $n(n-1)$, we say that $\riem$ satisfies a $PIC_2$ lower bound if there is $K\in\reals$ such that $\riem+K\mathrm{I}\in\mathcal{C}_{PIC_2}$. Note that the $PIC_2$ curvature condition implies both the $PIC_1$ curvature condition and nonnegative sectional curvature. Further, in dimension $3$, the $PIC_2$ curvature condition and nonnegative sectional curvature are equivalent, so the results in this section hold with a sectional curvature lower bound in this dimension.

\begin{definition}
    Let $r_0,v_0,K_0>0$. Then a \emph{($n$-dimensional) uniformly non-collapsed $PIC_2$ limit space} $(\metsp,d_\metsp)$ is a  pointed Gromov--Hausdorff limit of complete pointed Riemannian manifolds $(\mfd^n_i,g_i,p_i)$, satisfying
    \[
    \vol_{g_i}(B_{g_i}(y,r_0)) \geq v_0, \hspace{5ex} \riem(g_i)+K_0\mathrm{I}\in\mathcal{C}_{PIC_2}
    \]
    for all $i$ and at every $y\in\mfd_i$.

    We say that $(\metsp,d_\metsp)$ has a \emph{$PIC_2$ lower bound of $-K_0$}, and that its approximating manifolds have \emph{uniform volume lower bound $v_0$ at scale $r_0$}.
\end{definition}

We now give a consequence of Theorem \ref{PROP Reifenberg uniqueness} which will be more useful for applications.

\begin{theorem}\label{PROP main corollary}
    Let $(\metsp,d_\metsp)$ be a uniformly non-collapsed $PIC_2$ limit space which is uniformly Reifenberg.

    Now suppose that $(\mfd,g(t)),(\twiddle{\mfd},\twiddle{g}(t)),t\in(0,T]$ are complete Ricci flows. Assume further that there exists $p\in\mfd$, $\twiddle{p}\in\twiddle{\mfd}$, $x\in\metsp$, and a sequence of times $t_i\searrow0$, such that $(\mfd,g(t_i),p)$, $(\twiddle{\mfd},\twiddle{g}(t_i),\twiddle{p})$ pointed Gromov--Hausdorff converge to $(\metsp,d_\metsp,x)$ as $i\rightarrow\infty$. Suppose there is $K>0$ such that $g(t),\twiddle{g}(t)$ both satisfy
    \[
    \riem+K\mathrm{I}\in\mathcal{C}_{PIC_2}
    \]
    for $t\in(0,T]$.

    Then the distance functions $d_{g(t)},d_{\twiddle{g}(t)}$ both converge locally uniformly to some distance functions $d_0,\twiddle{d}_0$ as $t\searrow0$, such that the topologies induced by $d_0$, $\twiddle{d}_0$ agree with the manifold topologies of $\mfd$, $\twiddle{\mfd}$. Both $(\mfd,d_0)$ and $(\twiddle{\mfd},\twiddle{d}_0)$ are isometric to $(\metsp,d_\metsp)$ as metric spaces.
    
    Further, let $\ph:\mfd\rightarrow\twiddle{\mfd}$ be a homeomorphism such that $d_0=\ph^*\twiddle{d}_0$. Then the map $\ph:\mfd\rightarrow\twiddle{\mfd}$ is a diffeomorphism, and $g(t)\equiv\ph^*\twiddle{g}(t)$ for all $t$.
\end{theorem}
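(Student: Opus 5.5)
The plan is to reduce Theorem \ref{PROP main corollary} to Theorem \ref{PROP Reifenberg uniqueness}. To do this we verify the hypotheses \eqref{EQN assumptions inj}, possibly after shortening the time interval and rescaling, and we establish the convergence of distances. This rests on known estimates for $PIC_2$ (hence $PIC_1$) flows.

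\textbf{Step 1: volume non-collapsing at $t_i$.} We first transfer non-collapsing from $\metsp$ to the flows. Since $\riem+K\mathrm{I}\in\mathcal{C}_{PIC_2}$ implies $\sectional\geq -K$, Bishop--Gromov and Colding's volume convergence apply along the pointed Gromov--Hausdorff convergence $(\mfd,g(t_i),p)\to(\metsp,d_\metsp,x)$. Together with the uniform Reifenberg (or simply non-collapsed) structure of $\metsp$, which is inherited from the approximating manifolds $\mfd_i$, they give a uniform lower bound on the volume of unit balls of $g(t_i)$ for large $i$. This holds at least on large balls about $p$, and then everywhere by a covering and Bishop--Gromov argument.

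\textbf{Step 2: curvature estimates on a uniform interval.} Next we apply the Bamler--Cabezas-Rivas--Wilking pseudolocality and local lemmas \cite{BCRW}, in the form used by Simon--Topping and Lai \cite{SimonToppingMollification,Lai}, restarting the flow at time $t_i$. This yields a uniform $T'>0$ and $\alpha$ such that
\[
|\riem|\leq \frac{\alpha}{t-t_i}, \qquad \vol B_{g(t)}(y,1)\geq v
\]
for $t\in(t_i,T']$. Letting $i\to\infty$ gives $|\riem|\leq\alpha/t$ on $(0,T']$.

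Cheeger--Gromov--Taylor, applied at scale $\sqrt{t}$ together with the lower volume bound at that scale (from Bishop--Gromov and non-collapsing), gives $\inj\geq\sqrt{t/\alpha}$. The $PIC_2$ lower bound gives $\sectional\geq -K$, and parabolic rescaling by $K$ normalises this to $\sectional\geq-1$. On $[T',T]$ the flows are smooth and complete with bounded geometry, so uniqueness there follows from Chen--Zhu once it is established on $(0,T']$.

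\textbf{Step 3: distance convergence.} We then use the standard distance distortion estimates for $|\riem|\leq\alpha/t$ with $\ric\geq-(n-1)K$:
\[
d_{g(s)}-C\sqrt{t-s}\leq d_{g(t)}\leq e^{C(t-s)}d_{g(s)}.
\]
These show that $d_{g(t)}$ converges locally uniformly to a distance $d_0$ inducing the manifold topology (the bi-Hölder argument of \cite{SimonToppingMollification}). Then $(\mfd,d_0,p)$ is the pointed Gromov--Hausdorff limit of $(\mfd,g(t_i),p)$, which is $(\metsp,d_\metsp,x)$. The same holds for $\twiddle{g}$, so both spaces are isometric to $\metsp$, and any homeomorphism $\ph$ with $d_0=\ph^*\twiddle{d}_0$ fits the framework of Theorem \ref{PROP Reifenberg uniqueness}.

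\textbf{Step 4: applying the main theorem.} Since $(\mfd,d_0)\cong\metsp$ is uniformly Reifenberg, Theorem \ref{PROP Reifenberg uniqueness} applied on $(0,T']$ gives that $\ph$ is a diffeomorphism and $g\equiv\ph^*\twiddle{g}$ there. Uniqueness of complete bounded-curvature flows then extends the identity to $(0,T]$.

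The main obstacle is Step 2: obtaining curvature and volume bounds with constants uniform in $i$ from only pointed Gromov--Hausdorff information at a single base point. Here I would mimic Lai's localized argument and use the global non-collapsing of $\metsp$ to get estimates at all points, not just near $p$.
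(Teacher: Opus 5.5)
Your proposal follows the paper's proof. You use volume convergence to get non-collapsing of $g(t_i)$ near $p$, then Lai's BCRW-based local lemma restarted at $t_i$ to get $|\riem|\leq\alpha/t$ and the injectivity radius bound on a uniform interval (you go through Cheeger--Gromov--Taylor where the paper reads the injectivity radius off Lai's lemma). Distance distortion then gives $d_0$ with $(\mfd,d_0)\cong(\metsp,d_\metsp)$, and rescaling puts you in Theorem \ref{PROP Reifenberg uniqueness}.

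Two small corrections. First, at a fixed time $t_i$ you only get the volume bound on $B_{g(t_i)}(p,R)$ for $i\geq N_R$; no covering or Bishop--Gromov argument makes it global. That is why the paper applies the local lemma on those balls, controls them with a shrinking-balls estimate, and sends $i\to\infty$ and then $R\to\infty$. Second, your extension to $[T',T]$ via Chen--Zhu assumes a curvature bound there that is not among the hypotheses, whereas the paper simply works on the reduced time interval.
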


The proof of Theorem \ref{PROP main corollary}, given Theorem \ref{PROP Reifenberg uniqueness}, can be found in \S\ref{SEC proofs of consequences}.

It may seem surprising that we must require such a strong non-collapsing assumption as a uniform lower bound on the volume of \emph{every} unit ball. However, if we replace this assumption with a lower bound on the volume of just one unit ball, then the result fails. This can be seen by considering the example of a hyperbolic punctured torus given in \cite{ToppingCusp}. There are two different Ricci flows which come out of this space; the homothetically expanding one, but also a flow on a non-punctured torus, in which the cusp contracts. We note that in particular, the latter flow does not satisfy a $C/t$ curvature bound, so does not meet the conditions of Theorem \ref{PROP Reifenberg uniqueness}.

While Theorem \ref{PROP main corollary} is a statement about the uniqueness of Ricci flows, it is also possible to obtain the existence of such flows in a wider setting, as in the following theorem.

\begin{theorem}[{\cite[Theorem 1.8]{SimonToppingMollification}}, {\cite[Proof of Corollary 1.3]{Lai}}] \label{PROP RF existence}
    Let $r_0,v_0,K_0>0$, and let $(\metsp,d_\metsp,x)$ be a uniformly non-collapsed $PIC_2$ limit space with a $PIC_2$ lower bound of $-K_0$, such that the approximating manifolds have uniform volume lower bound $v_0$ at scale $r_0$.

    Then there exists a smooth manifold $\mfd$, a complete Ricci flow $g(t)$ on $\mfd$ for $t\in(0,T]$, where $T=T(r_0,v_0,K_0)$, and a continuous distance function $d_0$ on $\mfd$, such that $d_{g(t)}\rightarrow d_0$ locally uniformly as $t\searrow0$, and $(\mfd,d_0)$ is isometric to $(\metsp,d_\metsp)$ as metric spaces. Further, there exist some some $\alpha,v,K>0$, depending only on $r_0,v_0,K_0$, such that the Ricci flow satisfies
    \[
    |\riem(g(t))|\leq\frac{\alpha}{t}, \hspace{5ex} \vol_{g(t)}(B_{g(t)}(x,r_0)) \geq v, \hspace{5ex} \riem(g(t))+K\mathrm{I}\in\mathcal{C}_{PIC_2},
    \]
    and there are constants $\gamma_n,C_n>0$ such that
    \[
    d_{g(t_1)}-\gamma_n\sqrt{\alpha}\left(\sqrt{t_2}-\sqrt{t_1}\right) \leq d_{g(t_2)} \leq e^{C_nK(t_2-t_1)}d_{g(t_1)}
    \]
    for any $0\leq t_1\leq t_2\leq T$.
\end{theorem}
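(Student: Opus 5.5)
The plan is to build the flow as a limit of Ricci flows on the approximating manifolds, following Simon--Topping and Lai. The key point is that every estimate depends only on $r_0,v_0,K_0$, and this uniformity is what allows us to pass to a limit.

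\textbf{Step 1: local flows on the approximants.} Let $(\mfd_i,g_i,p_i)$ be the approximating manifolds. Each $g_i$ may have unbounded curvature, so we cannot just run Shi's flow. Instead I would first conformally modify $g_i$ near the boundary of a large ball $B_{g_i}(p_i,R)$ to make it complete with bounded curvature, in the style of Hochard. I would then run a local ``pyramid'' Ricci flow in the manner of Lai and Simon--Topping.

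The heart of the argument, and the step I expect to be the main obstacle, is an a priori estimate showing that these local flows exist up to a uniform time $T=T(r_0,v_0,K_0)$ on slightly smaller balls, with the following properties:
\begin{itemize}
\item the curvature bound $|\riem(g_i(t))|\le\alpha/t$ holds;
\item the $PIC_2$ lower bound $\riem+K\mathrm{I}\in\mathcal{C}_{PIC_2}$ persists, with $K$ only mildly worse than $K_0$;
\item the volume lower bound $\vol B_{g_i(t)}(x,r_0)\ge v$ persists.
\end{itemize}
These are coupled, and I would prove them by a contradiction/blow-up argument. The $PIC_2$ lower bound is kept by a local maximum-principle version of the BCRW pinching estimates, adapted from the $PIC_1$ case to $PIC_2$ as in Lai. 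The $\alpha/t$ bound would follow by blowing up at a point where it fails and obtaining an ancient noncollapsed flow with nonnegative $PIC_2$ curvature, which contradicts the classification and asymptotic volume ratio. Volume noncollapsing is propagated by the distance estimates of Step 2 together with Bishop--Gromov applied through the Ricci lower bound. An iteration over expanding balls (the pyramid) then extends the time of existence to a uniform $T$.

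\textbf{Step 2: distance estimates.} On each flow I would establish the two-sided distance bound. For the lower bound, apply Hamilton--Perelman's shrinking lemma with $\ric\le(n-1)\alpha/t$; integrating in time gives
\[
d_{g(t_2)}\ge d_{g(t_1)}-\gamma_n\sqrt{\alpha}\left(\sqrt{t_2}-\sqrt{t_1}\right).
\]
For the upper bound, the $PIC_2$ lower bound implies $\ric\ge -C_nK$, and hence $\partial_t g\le 2C_nK g$. This yields $d_{g(t_2)}\le e^{C_nK(t_2-t_1)}d_{g(t_1)}$.

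\textbf{Step 3: passing to a limit.} The curvature and volume bounds give injectivity radius bounds at each time $t>0$ by Cheeger--Gromov--Taylor. Hamilton's compactness theorem, together with a diagonal argument in $R\to\infty$, then yields a limit flow $(\mfd,g(t),x)$ on $(0,T]$. This limit flow inherits all the estimates, and is complete because the radii $R\to\infty$.

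By the distance bounds, $d_{g(t)}$ is locally uniformly Cauchy as $t\searrow0$, so it converges to a continuous distance $d_0$. The same bounds, applied uniformly in $i$ with $t_1=0$, show that $d_{g_i(t)}$ is uniformly close to $d_{g_i}$ on compact sets. Hence $(\mfd,d_0,x)$ is the pointed Gromov--Hausdorff limit of $(\mfd_i,g_i,p_i)$, and therefore isometric to $(\metsp,d_\metsp)$.
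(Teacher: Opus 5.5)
Your outline follows the route of the sources the paper cites; the paper gives no proof of its own. That route is: uniform-time existence on each approximating manifold with $|\riem|\leq\alpha/t$, a persisting $PIC_2$ lower bound and noncollapsing as in Lai, then the shrinking/expanding-balls distance estimates, a Hamilton compactness limit, and identification of the $t\searrow0$ limit of $d_{g(t)}$ with the pointed Gromov--Hausdorff limit. The one step you state too loosely is volume persistence: distance distortion plus Bishop--Gromov only transfer packing information between times, and turning that into a lower bound for $\vol_{g(t)}$ is circular. You need a genuine volume-continuity input there, such as the noncollapsing part of Lai's Lemma 3.4 (which the paper itself invokes in Section 2) or a contradiction argument via Cheeger--Colding volume convergence.
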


While we will not need it here, we note that the analogous result for a $PIC_1$ lower bound also holds (see \cite{Lai}).

By combining Theorem \ref{PROP main corollary} and Theorem \ref{PROP RF existence}, we can obtain some interesting corollaries. To make these easier to state, we give the following definition.

\begin{definition}
    Let $\mfd$ be a smooth manifold, and let $d_0$ be a distance function defined on $\mfd$, such that the topology generated by $d_0$ agrees with the manifold topology of $\mfd$. Assume further that $(\mfd,d_0)$ is a uniformly non-collapsed $PIC_2$ limit space. We say that $(\mfd,d_0)$ has a \emph{flow-compatible smooth structure} if the following is true. For some $T>0$, there is a complete Ricci flow $(g(t))_{t\in(0,T]}$ defined on $\mfd$, such that for some $K\geq0$ we have $\riem(g(t))+K\mathrm{I}\in\mathcal{C}_{PIC_2}$ for all $t\in(0,T]$, and the distance function $d_{g(t)}$ converges locally uniformly to $d_0$ as $t\searrow0$.
\end{definition}

\begin{remark}
    A priori, any uniformly non-collapsed $PIC_2$ limit space is merely a metric space. This definition is requiring such a space to additionally be endowed with a smooth structure, which can support a Ricci flow starting from $d_\metsp$.
\end{remark}

\begin{remark}
    By Theorem \ref{PROP RF existence}, every uniformly non-collapsed $PIC_2$ limit space can be endowed with a flow-compatible smooth structure.
\end{remark}

By applying theorem \ref{PROP main corollary}, we may obtain the following.

\begin{corollary}\label{PROP new Myers--Steenrod}
    Let $(\mfd,d_0), (\twiddle{\mfd},\twiddle{d}_0)$ be Reifenberg uniformly non-collapsed $PIC_2$ limit spaces with flow-compatible smooth structures. Then any isometry of metric spaces $\phi:(\mfd,d_0)\rightarrow(\twiddle{\mfd},\twiddle{d}_0)$ must be a diffeomorphism.
\end{corollary}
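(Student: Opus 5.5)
The plan is to reduce Corollary \ref{PROP new Myers--Steenrod} directly to Theorem \ref{PROP main corollary}. By the definition of a flow-compatible smooth structure, there are complete Ricci flows $(g(t))_{t\in(0,T]}$ on $\mfd$ and $(\twiddle{g}(t))_{t\in(0,\twiddle{T}]}$ on $\twiddle{\mfd}$, and constants $K,\twiddle{K}\geq 0$, such that
\[
\riem(g(t))+K\mathrm{I}\in\mathcal{C}_{PIC_2}, \qquad \riem(\twiddle{g}(t))+\twiddle{K}\mathrm{I}\in\mathcal{C}_{PIC_2},
\]
with $d_{g(t)}\to d_0$ and $d_{\twiddle{g}(t)}\to\twiddle{d}_0$ locally uniformly. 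We first normalise the data. Shrinking to a common final time $\min\{T,\twiddle{T}\}$ gives both flows the same interval. Replacing $K$ and $\twiddle{K}$ by $\max\{K,\twiddle{K}\}$ preserves the curvature condition, since $\mathcal{C}_{PIC_2}$ is a convex cone containing $\mathrm{I}$. We then take $(\metsp,d_\metsp)=(\mfd,d_0)$, which is Reifenberg and a uniformly non-collapsed $PIC_2$ limit space by hypothesis.

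Next we verify the pointed Gromov--Hausdorff convergence required by Theorem \ref{PROP main corollary}. Fix $p\in\mfd$ and set $\twiddle{p}=\phi(p)$.
\begin{itemize}
\item For the first flow, we show that $(\mfd,d_{g(t_i)},p)\to(\mfd,d_0,p)$ for any $t_i\searrow0$. Local uniform convergence of distance functions on a fixed space, with $d_0$ inducing the manifold topology, makes the identity map an $\eps$-approximation on balls: $d_0$-balls have compact closure (by completeness and local compactness of the limit space), and on these compact sets $|d_{g(t_i)}-d_0|\to0$ uniformly. There is a small technical point. The $d_{g(t_i)}$-ball of radius $R$ must be comparable to the $d_0$-ball. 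Uniform convergence on a slightly larger compact $d_0$-ball handles the inclusion one way. The other inclusion follows because points outside a large $d_0$-ball cannot be $d_{g(t_i)}$-close to $p$: a $g(t_i)$-minimising path would have to exit through the boundary of the compact ball, where the distances converge.
\item For the second flow, the same argument gives $(\twiddle{\mfd},d_{\twiddle{g}(t_i)},\twiddle{p})\to(\twiddle{\mfd},\twiddle{d}_0,\twiddle{p})$. Since $\phi$ is an isometry onto $(\twiddle{\mfd},\twiddle{d}_0)$, this limit is pointed isometric to $(\mfd,d_0,p)$, i.e.\ to $(\metsp,d_\metsp,x)$ with $x=p$.
\end{itemize}

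Theorem \ref{PROP main corollary} then applies. Its "further" clause asks for a homeomorphism $\ph:\mfd\to\twiddle{\mfd}$ with $d_0=\ph^*\twiddle{d}_0$, where $d_0$ and $\twiddle{d}_0$ are the limits of $d_{g(t)}$ and $d_{\twiddle{g}(t)}$; these are exactly our given distance functions, by uniqueness of limits. The map $\phi$ is an isometry of $(\mfd,d_0)$ onto $(\twiddle{\mfd},\twiddle{d}_0)$, and both metrics induce the manifold topologies, so $\phi$ is a homeomorphism with $d_0=\phi^*\twiddle{d}_0$. Taking $\ph=\phi$, the theorem yields that $\phi$ is a diffeomorphism, and additionally that $g(t)\equiv\phi^*\twiddle{g}(t)$.

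The only step requiring any care is the passage from local uniform convergence of distance functions to pointed Gromov--Hausdorff convergence, in particular the ball-comparison issue above. Everything else is bookkeeping of hypotheses.
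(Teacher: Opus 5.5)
Your proposal is correct and follows the paper's route exactly. The paper obtains the corollary as a one-line application of Theorem \ref{PROP main corollary}, and your checks fill in the bookkeeping it leaves implicit: a common time interval, a common curvature constant (take it strictly positive, e.g.\ $\max\{K,\twiddle{K},1\}$, since the theorem asks for $K>0$), and pointed Gromov--Hausdorff convergence derived from local uniform convergence of the distance functions on the proper space $(\mfd,d_0)$.
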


\begin{remark}
    Corollary \ref{PROP new Myers--Steenrod} an analogue of the Myers--Steenrod theorem for this setup in the Reifenberg case. This means that in this case, flow-compatible smooth structures are unique up to diffeomorphism.
\end{remark}

Using this, we may prove the following diffeomorphic version of Perelman's stability theorem (\cite{PerelmanStabilityTheorem}, see also \cite{StabilityTheoremAccessible}) in our setting.

\begin{theorem}\label{PROP diffeo stability thm}
    Let $(\mfd,d_0), (\twiddle{\mfd},\twiddle{d}_0)$ be compact Reifenberg $n$-dimensional uniformly non-collapsed $PIC_2$ limit spaces, with flow-compatible smooth structures and a $PIC_2$ lower bound of $-K$. Then there exists $\eps=\eps((\mfd,d_0),K)>0$ such that if $d_{GH}((\mfd,d_0), (\twiddle{\mfd},\twiddle{d}_0))<\eps$, then $\twiddle{\mfd}$ is diffeomorphic to $\mfd$.
\end{theorem}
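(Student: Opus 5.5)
We argue by contradiction combined with Perelman's stability theorem and the uniqueness in Theorem \ref{PROP main corollary}. Suppose there is a sequence of compact Reifenberg uniformly non-collapsed $PIC_2$ limit spaces $(\twiddle{\mfd}_i,\twiddle{d}_{0,i})$ with flow-compatible smooth structures and $PIC_2$ lower bound $-K$. Suppose also that $d_{GH}((\mfd,d_0),(\twiddle{\mfd}_i,\twiddle{d}_{0,i}))\to0$, but no $\twiddle{\mfd}_i$ is diffeomorphic to $\mfd$.

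\textbf{Topological step.} Each space is an Alexandrov space, since a $PIC_2$ lower bound implies a sectional curvature lower bound $-K$. The dimension is fixed at $n$, and non-collapsing passes to the limit $(\mfd,d_0)$. Perelman's stability theorem therefore gives homeomorphisms $h_i:(\mfd,d_0)\to(\twiddle{\mfd}_i,\twiddle{d}_{0,i})$ for large $i$. These are also Gromov--Hausdorff approximations with distortion tending to $0$. This alone only gives homeomorphism, and the task is to upgrade it to a diffeomorphism.

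\textbf{Upgrade via the flows.} Let $g(t)$ be the flow-compatible Ricci flow on $\mfd$. I would instead construct flows canonically from approximating manifolds using Theorem \ref{PROP RF existence}, since those satisfy uniform estimates $|\riem|\le\alpha/t$, volume non-collapsing, the $PIC_2$ bound and the distance distortion estimates. The constants depend only on $r_0,v_0,K$, and $r_0,v_0$ can be chosen uniformly along the sequence; this needs a check that the non-collapsing scale is uniform, which I would try to arrange by using that the sequence converges to a fixed compact space. By Theorem \ref{PROP main corollary}, the given flow-compatible flow $\twiddle{g}_i(t)$ on $\twiddle{\mfd}_i$ coincides, up to diffeomorphism, with the flow from Theorem \ref{PROP RF existence}. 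So $\twiddle{\mfd}_i$ is diffeomorphic to a manifold carrying a flow $\twiddle{g}_i(t)$, $t\in(0,T]$, with $T$ uniform and the uniform estimates above.

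\textbf{Compactness.} Fix $t_*\in(0,T]$. The bound $|\riem|\le\alpha/t_*$, the volume lower bound and bounded diameter give a uniform injectivity radius bound. Shi's estimates then give bounds on all derivatives of curvature. Hamilton's compactness theorem yields a subsequence along which $(\twiddle{\mfd}_i,\twiddle{g}_i(t))$ converges smoothly on $(0,T]$, via diffeomorphisms $F_i:\mfd_\infty\to\twiddle{\mfd}_i$, to a Ricci flow $g_\infty(t)$ on a compact manifold $\mfd_\infty$. For large $i$ this makes $\twiddle{\mfd}_i$ diffeomorphic to $\mfd_\infty$.

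\textbf{Identifying the limit.} It remains to show $\mfd_\infty$ is diffeomorphic to $\mfd$. The distance distortion estimate in Theorem \ref{PROP RF existence} is uniform in $i$. Passing to the limit, $d_{g_\infty(t)}$ converges uniformly as $t\searrow0$ to some $d_{0,\infty}$. Moreover, $(\mfd_\infty,d_{0,\infty})$ is the Gromov--Hausdorff limit of $(\twiddle{\mfd}_i,\twiddle{d}_{0,i})$, hence isometric to $(\mfd,d_0)$. The flow $g_\infty$ still satisfies the $PIC_2$ lower bound, so it is a flow-compatible structure on $(\mfd,d_0)$. Corollary \ref{PROP new Myers--Steenrod}, or directly Theorem \ref{PROP main corollary} applied to $g$ and $g_\infty$, then shows the isometry $\mfd\to\mfd_\infty$ is a diffeomorphism. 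This contradicts the assumption that no $\twiddle{\mfd}_i$ is diffeomorphic to $\mfd$.

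The main obstacle is the uniformity in the compactness step: obtaining a uniform existence time and uniform $\alpha$ and non-collapsing across the sequence, and showing that the limit initial metric really is $d_0$. The latter requires commuting the limits $i\to\infty$ and $t\searrow0$, which is exactly where the uniform two-sided distance distortion estimate is used.
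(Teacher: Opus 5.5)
Your proposal is correct and follows essentially the same route as the paper. Both arguments take the flows from Theorem \ref{PROP RF existence}, identify them with the given structures via Theorem \ref{PROP main corollary} (equivalently Corollary \ref{PROP new Myers--Steenrod}), pass to a compact limit flow by Hamilton compactness, use the uniform distance distortion to show the limit's initial space is isometric to $(\mfd,d_0)$, and conclude with Corollary \ref{PROP new Myers--Steenrod}. Your Perelman stability step is not needed, and the uniform non-collapsing you leave to check is obtained in the paper at scale $2\operatorname{diam}(\mfd,d_0)$ from Cheeger--Colding volume convergence: otherwise a diagonal sequence of approximating manifolds would converge to the non-collapsed space $(\mfd,d_0)$ with volume tending to zero.
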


For the proof of Theorem \ref{PROP diffeo stability thm}, see \S\ref{SEC proofs of consequences}.

We have the following corollary of Theorem \ref{PROP diffeo stability thm}, which gives an affirmative answer to \cite[Question 1.9]{WilkingSurvey} in our particular situation.

\begin{theorem}\label{PROP diffeo stability corollary}
    Let $(\mfd_k,g_k)_{k\in\nats}$ be a sequence of $n$-dimensional Riemannian manifolds with a uniform $PIC_2$ lower bound converging to a compact Reifenberg uniformly non-collapsed $PIC_2$ limit space. Then for sufficiently large $k_1$, $k_2$, the manifolds $\mfd_{k_1}$, $\mfd_{k_2}$ are diffeomorphic.
\end{theorem}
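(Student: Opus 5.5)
The idea is to reduce Theorem \ref{PROP diffeo stability corollary} to Theorem \ref{PROP diffeo stability thm}, by producing, for each large $k$, a flow-compatible smooth structure on a limit-type space close to $(\mfd_k,g_k)$. The point is that each $(\mfd_k,g_k)$ is itself a (trivial) uniformly non-collapsed $PIC_2$ limit space: take the constant sequence.

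\textbf{Step 1: set-up.} Let $(\metsp,d_\metsp)$ be the compact Reifenberg limit, with uniform $PIC_2$ lower bound $-K_0$ and uniform volume lower bound $v_0$ at scale $r_0$ for the $g_k$. By compactness of $\metsp$ and Gromov--Hausdorff convergence, the $\mfd_k$ are compact for large $k$, with uniformly bounded diameter. Hence each $(\mfd_k,d_{g_k})$ is a compact $n$-dimensional uniformly non-collapsed $PIC_2$ limit space with lower bound $-K_0$.

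\textbf{Step 2: structures and curvature constant.} Give $(\mfd_k,d_{g_k})$ the flow-compatible smooth structure coming from the classical Ricci flow starting at $g_k$. Preservation of $PIC_2$ lower bounds (e.g.\ via Theorem \ref{PROP RF existence}, which gives $K=K(r_0,v_0,K_0)$ and $T$ uniform in $k$) ensures this flow lives on $\mfd_k$ with its given smooth structure, so the resulting structure is diffeomorphic to $\mfd_k$. Likewise Theorem \ref{PROP RF existence} gives $\metsp$ a flow-compatible smooth structure on some manifold $\mfd$. We may take a single constant $K\ge K_0$ serving as the $PIC_2$ lower bound for all of these spaces and flows.

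\textbf{Step 3: the main obstacle.} Theorem \ref{PROP diffeo stability thm} asks both spaces to be Reifenberg, and the $\mfd_k$ are smooth. Smoothness alone does not give the uniform-in-$k$ Reifenberg scale one might want, but the statement as written only requires each space individually to be Reifenberg. A compact smooth Riemannian manifold is uniformly Reifenberg: at small enough scales, depending on $k$, its balls are Gromov--Hausdorff close to Euclidean balls. Since Theorem \ref{PROP diffeo stability thm} lets $\eps$ depend only on the fixed space $(\mfd,d_\metsp)$ and $K$, no uniformity in $k$ is needed. This is the point I would check most carefully: that $\eps$ there depends only on the reference space and $K$, and not on any Reifenberg scale of the second space.

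\textbf{Step 4: conclusion.} Apply Theorem \ref{PROP diffeo stability thm} with $(\mfd,d_\metsp)$ as the reference space. Since $d_{GH}((\mfd_k,d_{g_k}),\metsp)\to0$, for all $k$ large enough that this distance is below $\eps$, $\mfd_k$ is diffeomorphic to $\mfd$. Hence $\mfd_{k_1}\cong\mfd\cong\mfd_{k_2}$ for all sufficiently large $k_1,k_2$.
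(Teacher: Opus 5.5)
Your proposal is correct and matches the argument the paper intends; the paper states this result as a direct corollary of Theorem \ref{PROP diffeo stability thm} without writing out a proof. The argument is to regard each compact $(\mfd_k,d_{g_k})$ as a Reifenberg uniformly non-collapsed $PIC_2$ limit space via the constant sequence, give it the flow-compatible structure from the classical Ricci flow starting at $g_k$, and apply Theorem \ref{PROP diffeo stability thm} with the limit space (given a flow-compatible structure by Theorem \ref{PROP RF existence}) as the reference space, using that $\eps$ depends only on that space and $K$.

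One clean-up: in Step 2 you do not need to invoke $PIC_2$ preservation, Theorem \ref{PROP RF existence} for the $\mfd_k$, or a uniform volume bound on the $g_k$ (which the statement does not actually give). The classical flow from $g_k$ lives on $\mfd_k$ by construction and has bounded curvature for a short time, so some $k$-dependent constant suffices in the definition of a flow-compatible structure. The uniform $K$ required by Theorem \ref{PROP diffeo stability thm} is just the given uniform $PIC_2$ lower bound on the $g_k$, enlarged if necessary to cover the limit space's own constant.
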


Analogous results to Theorem \ref{PROP diffeo stability thm} and Corollary \ref{PROP diffeo stability corollary} can also be obtained in the complete noncompact case, provided the conclusions are suitably localised.

\subsection{Proof strategy, and outline of this paper}

We now discuss the strategy we will use to prove Theorem \ref{PROP Reifenberg uniqueness}, the proof of which will occupy the rest of this paper. We wish to find a gauge in which the equations for $g(t)$ and $\twiddle{g}(t)$ become parabolic, by writing them as Ricci-DeTurck flows with respect to some choice of metric. As in \cite{MCLeePaper}, we compare them directly by writing $g(t)$ as a Ricci-DeTurck flow with respect to $\twiddle{g}(t)$ (since $\twiddle{g}(t)$ is already a Ricci-DeTurck flow with respect to itself). In particular, suppose we have a time-dependent family of diffeomorphisms $F:\mfd\times(0,T]\rightarrow\twiddle{\mfd}$ which satisfy the harmonic map heat flow equation
\begin{equation}\label{EQN HMHF}
    \del_t F_t = \lap_{g(t),\twiddle{g}(t)} F_t,
\end{equation}
where we write $F_t(\cdot):=F(\cdot,t)$.
Then the pushforward $\widehat{g}(t):=(F_t^{-1})^*g(t)$ is a \emph{Ricci-DeTurck flow} with respect to $\twiddle{g}(t)$, and satisfies the following equation.
\begin{equation}\label{EQN Ricci-DeTurck}
    \begin{cases}
        \del_t\widehat{g} = -2\ric(\widehat{g})+\lie_Vg;\\
        V^k = \widehat{g}^{ij}\left(\widehat{\Gamma}^k_{ij}-\twiddle{\Gamma}^k_{ij}\right).
    \end{cases}
\end{equation}
The hope is to obtain such a family $(F_t)_{t\in(0,T]}$ of diffeomorphisms, with good estimates, such that $F_t\rightarrow\ph$ as $t\searrow0$, where $\ph:\mfd\rightarrow\twiddle{\mfd}$ is the isometry of metric spaces in the assumptions of Theorem \ref{PROP Reifenberg uniqueness}. Then since \eqref{EQN Ricci-DeTurck} is uniformly parabolic, unlike the Ricci flow equation, we hope this will make it possible to obtain that $\widehat{g}(t)\equiv\twiddle{g}(t)$. This means that $F_t$ is an isometry for all $t\in(0,T]$, and is therefore constant in time by \eqref{EQN HMHF}, which gives the result.

This leaves the task of finding the family $F$ of diffeomorphisms. We are working with the disadvantage that $g(t),\twiddle{g}(t)$ are not smooth back to $t=0$, so we need to start our harmonic map heat flow at some positive time, and take limits as that time goes back to $0$. The na\"ive approach might be to consider the given homeomorphism $\ph:\mfd\rightarrow\twiddle{\mfd}$ at each of a sequence of times $t_i\searrow0$, take the solution to \eqref{EQN HMHF} starting from $\ph$ at each time $t_i$, and take the limit of these as $t_i\searrow0$. Unfortunately, while we will have control on how much the distance functions $d_{g(t)}$ and $\ph^*d_{\twiddle{g}(t)}$ can differ at each time $t$, thanks to distance distortion estimates (see \S\ref{SEC proofs of consequences}), we have no control on the extent to which $g(t)$ and $\ph^*\twiddle{g}(t)$ can differ. This means we have no control on the derivative of $\ph$ when seen as a map from $(\mfd,g(t))$ to $(\twiddle{\mfd},\twiddle{g}(t))$, which prevents us from taking limits of the desired flows, or obtaining useful estimates afterwards.

Therefore, before we can flow and take limits, we must ``smooth out" the map $\ph$. Once this is achieved, it is then possible to obtain the estimates needed to take limits and obtain the desired family $F$ of diffeomorphisms.

The structure of this paper is as follows. In \S\ref{SEC proofs of consequences}, we give the proofs of the various consequences of Theorem \ref{PROP Reifenberg uniqueness} mentioned in \S\ref{SEC introduction}. In \S\ref{SEC smoothing}, we obtain ``smoothed versions" of the map $\ph:\mfd\rightarrow\twiddle{\mfd}$, with derivative control, by making use of a result of Gromov, and parts of Katsuda's proof of this result which gives full details. In \S\ref{SEC HMHF}, we use results from \cite[\S2]{MCLeePaper} to establish that we can flow these maps by harmonic map heat flow. Further, we can use the parabolicity of \eqref{EQN HMHF} to obtain estimates for all derivatives of these flows, which will allow us to take limits. In \S\ref{SEC R-DT flow}, we change viewpoints, and use the parabolicity of \eqref{EQN Ricci-DeTurck} to show that the harmonic map heat flows remain close to being isometries of Riemannian manifolds. This uses ideas from \cite[\S3]{MCLeePaper}, and a result on local stability of the Ricci-DeTurck flow from \cite{CLL25}. Finally, in \S\ref{SEC uniqueness}, we make use of the Reifenberg condition to provide the final ingredient required to prove the main theorem.

\subsection{Acknowledgements}
The author wishes to thank her supervisors, Felix Schulze and Peter Topping, and Man Chun Lee, for many helpful and insightful discussions of this topic. The author is supported by the Warwick Mathematics Institute Centre for Doctoral Training, and gratefully acknowledges funding from the University of Warwick and the UK Engineering and Physical Sciences Research Council (Grant number: EP/W524645/1).

\subsection{AI usage}
All the mathematical ideas and strategies in this article were developed solely by the author. AI was only used in the very final stages of the preparation of this manuscript to identify possible improvements of the exposition. The author retains full responsibility for correctness of the final work.

\section{Proofs of consequences of the main theorem} \label{SEC proofs of consequences}

We now give the proofs of some of the consequences of Theorem \ref{PROP Reifenberg uniqueness} given in \S\ref{SEC consequences}.

We begin by proving Theorem \ref{PROP main corollary}, given Theorem \ref{PROP Reifenberg uniqueness}, which first means understanding how to use information about the initial data of a Ricci flow to establish properties of the flow.

\begin{lemma}\label{PROP non-collapsed flow from non-collapsed space, const radius}
    Let $(\mfd^n,g(t)), t\in(0,T]$ be a complete Ricci flow. Suppose that there is $p\in\mfd$ such that $(\mfd,d_{g(t_i)},p)$ pointed Gromov--Hausdorff converges to a pointed metric space $(\metsp,d_\metsp,x)$ for some sequence of times $t_i\searrow0$. Suppose that $g(t)$ satisfies
    \[
    \riem+K\mathrm{I}\in\mathcal{C}_{PIC_1}
    \]
    for all $t\in(0,T]$, for some $\alpha,K>0$. Assume further that $(\metsp,d_\metsp,x)$ is a volume non-collapsed Ricci limit space. That is, there exist $v_0,\twiddle{K}>0$ such that $(\metsp,d_\metsp,x)$ is a pointed Gromov--Hausdorff limit of complete
    pointed Riemannian manifolds $(\twiddle{\mfd}^n_i,g_i,p_i)$, satisfying
    \[
    \vol_{g_i}(B_{g_i}(y,1)) \geq v_0, \hspace{5ex} \ric(g_i)\geq-\twiddle{K}
    \]
    for all $i$ and at every $y\in\twiddle{\mfd}_i$.

    Then there exist some $\widehat{T}, \alpha, \eta_0>0$, all depending only on $v_0,K,n$,
    such that
    \[
    |\riem(g(t))|\leq\frac{\alpha}{t}, \hspace{5ex}  \vol_{g(t)}(B_{g(t)}(y,1)) \geq \eta_0, \hspace{5ex} \inj_{g(t)}\geq\sqrt{\frac{t}{\alpha}}
    \]
    at every $y\in\mfd$, and for all $t\in(0,\min\{T,\widehat{T}\}]$.
\end{lemma}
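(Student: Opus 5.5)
Here the aim is to pass volume non-collapsing from the limit space $(\metsp,d_\metsp)$ to the flow at small positive times, and then invoke the $PIC_1$ pseudolocality/curvature-decay estimates of Bamler--Cabezas-Rivas--Wilking \cite{BCRW}, as used by Lai \cite{Lai}. The main difficulty is the circularity: deducing the $\alpha/t$ curvature bound needs non-collapsing of the flow, while controlling the volume of $g(t)$-balls via distance distortion needs a curvature bound.

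\textbf{Step 1: volumes of $d_\metsp$-balls.} The Ricci limit space $(\metsp,d_\metsp)$ is non-collapsed. By Cheeger--Colding volume convergence, the $n$-dimensional Hausdorff measure $\mathcal{H}^n$ on $\metsp$ satisfies $\mathcal{H}^n(B_{d_\metsp}(y,r))\geq c(v_0,\twiddle K,n)r^n$ for all $y\in\metsp$ and $r\leq1$. Moreover, every ball in $\metsp$ is a Gromov--Hausdorff limit of unit-scale balls that are almost Euclidean-volume.

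\textbf{Step 2: a bootstrap in time.} Set $\alpha,\eta_0$ from the BCRW estimate. In its local form: if $\riem+K\mathrm{I}\in\mathcal{C}_{PIC_1}$ and $\vol_{g(s)}B_{g(s)}(y,1)\geq \eta$ at some time $s$, then $|\riem|(\cdot,t)\leq \alpha(\eta)/(t-s)$ near $y$ for $t-s\leq \widehat T(\eta)$, and the volume lower bound persists with $\eta/2$.

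Consider the first time $\tau$ at which the conclusion $|\riem|\leq\alpha/t$ fails somewhere (or $\vol\geq\eta_0$ fails). On $(0,\tau]$ the $\alpha/t$ bound gives the two-sided distance distortion of Theorem \ref{PROP RF existence}:
\[
d_{g(t_1)}-\gamma_n\sqrt\alpha(\sqrt{t_2}-\sqrt{t_1})\leq d_{g(t_2)}\leq e^{CK(t_2-t_1)}d_{g(t_1)}.
\]
Under these bounds $d_{g(t)}$ is uniformly close to $d_{g(t_i)}$, which Gromov--Hausdorff converges to $d_\metsp$. This should let us compare the Riemannian volume of $g(t)$-balls with the volume of nearly-Euclidean balls in $\metsp$, giving $\vol_{g(t)}B(y,1)\geq 2\eta_0$.

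I expect this comparison to be the main obstacle. It requires a lower semicontinuity or volume-convergence statement for $g(t_i)\to\metsp$ along the flow. The flow manifolds themselves have $\ric\geq -C(K)$, since $PIC_1$ implies a Ricci lower bound, and they are non-collapsed on $(0,\tau]$ by the bootstrap hypothesis. I will apply Colding's volume convergence to the sequence $(\mfd,g(t_i),p)$. Its limit is $\metsp$, whose Hausdorff measure is bounded below by Step 1, so the volumes of $g(t_i)$-balls converge to the Hausdorff measure of the corresponding balls in $\metsp$. This uniformity, together with the distortion estimate for intermediate times, improves the volume bound strictly. The BCRW estimate then improves the curvature bound strictly, so $\tau$ cannot be less than $\min\{T,\widehat T\}$. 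To start the argument, I use that at very small positive times the smooth flow is locally bounded.

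\textbf{Step 3: injectivity radius.} Given $|\riem(g(t))|\leq\alpha/t$ and $\vol_{g(t)}B_{g(t)}(y,\sqrt t)\geq c(\eta_0,\alpha,n)t^{n/2}$, where the latter follows from Bishop--Gromov and the unit-scale bound, the Cheeger--Gromov--Taylor estimate gives $\inj_{g(t)}(y)\geq\sqrt{t/\alpha}$ after enlarging $\alpha$. All constants depend only on $v_0,K,n$.
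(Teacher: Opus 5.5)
There is a genuine gap in your Step 2: the continuity argument cannot be started. The flow is given only on $(0,T]$, with no curvature control as $t\searrow0$, and $\mfd$ may be noncompact. So at a fixed $t>0$ the curvature need not even be bounded on $\mfd$, and local boundedness at small positive times says nothing about $\sup_{\mfd}|\riem(g(t))|\leq\alpha/t$ on \emph{any} interval $(0,\tau]$. If the bound fails at arbitrarily small times, your first failure time is $\tau=0$ and the argument gives nothing. Even granting a start, $t\mapsto\sup_{\mfd}|\riem(g(t))|$ need not be continuous on a noncompact manifold, so extending past $\tau$ is also unjustified. Your volume step at intermediate times is not supported as written either. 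Volume convergence along $g(t_i)$ controls volumes only at the times $t_i$, and closeness of distance functions by itself gives no control of Riemannian volume at other times. (Balls in $\metsp$ are also not almost-Euclidean in volume in general, e.g.\ at cone points, though you do not need this.)

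The circularity you identify is not actually there, and the ingredient that removes it is already in your proposal. Cheeger--Colding volume convergence applies to $(\mfd,g(t_i),p)\to(\metsp,d_\metsp,x)$ using only the Ricci lower bound implied by the $PIC_1$ condition; no a priori non-collapsing of the flow is needed. Combined with your Step 1 and a contradiction argument, it gives, for each $R>0$, an $N_R$ such that $\vol_{g(t_i)}(B_{g(t_i)}(y,1))\geq\frac12 v_0$ for all $y\in B_{g(t_i)}(p,R)$ and $i\geq N_R$.

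Now apply your local BCRW-type estimate (the paper uses \cite[Lemma 3.4]{Lai}) with \emph{initial time} $s=t_i$, where the flow is smooth and complete. This gives $|\riem|\leq\alpha/(t-t_i)$, $\vol\geq\eta_0$ and $\inj\geq\sqrt{(t-t_i)/\alpha}$ on $B_{g(t_i)}(p,R)$ for $t\in[t_i,\min\{T,t_i+\widehat T\}]$, with constants depending only on $v_0,K,n$. Only after this do you use distance distortion, now justified by the bound just obtained, to get $B_{g(t)}(p,R-\gamma_n\sqrt{\alpha\widehat T})\subseteq B_{g(t_i)}(p,R)$. Then let $i\to\infty$, and then $R\to\infty$. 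No bootstrap is needed.
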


\begin{proof}
    Note that we have pointed Gromov--Hausdorff convergence of $(\mfd,g(t))$ and $(\twiddle{\mfd}_i,g_i)$ to $(\metsp,d_\metsp)$, with a Ricci lower bound in both cases. Then \cite[Theorem 5.9]{CheegerColding} allows us to pass the uniform volume lower bound from the $(\twiddle{\mfd}_i,g_i,p_i)$ to $(\metsp,d_\metsp,x)$, giving that $\mathcal{H}^n(B_{d_\metsp}(y,1))\geq v_0$ for every $y\in\metsp$. Next, we wish to pass the uniform volume lower bound from $(\metsp,d_\metsp,x)$ to $(\mfd,d_{g(t_i)},p)$.
    
    Fix $R>0$. We claim that there is $N_R\in\nats$ such that for each $i\geq N_R$, we have
    \[
    \vol_{g(t_i)}(B_{g(t_i)}(y,1)) \geq \half v_0
    \]
    for every $y\in B_{g(t_i)}(p,R)$. If not, then take a sequence of indices $i_j$ and points $y_j\in B_{g(t_{i_j})}(p,R)$ such that $\vol_{g(t_{i_j})}(B_{g(t_{i_j})}(y_j,1))<\half v_0$ for each $j$. Then, by passing to a subsequence such that under the Gromov--Hausdorff convergence $(\twiddle{\mfd}_i,g_i,p_i)\rightarrow(\metsp,d_\metsp,x)$, the points $y_j$ converge to some point $x_0$, we obtain a contradiction with \cite[Theorem 5.9]{CheegerColding}.
    
    Then given any $i\geq N_R$, we may apply \cite[Lemma 3.4]{Lai} starting from time $t_i$ to any point in $B_{g(t_i)}(p,R)$. This gives $\widehat{T}, \alpha, \eta_0>0$, all depending only on $v_0,K,n$,
    such that
    \[
    |\riem(g(t))|\leq\frac{\alpha}{t-t_i}, \hspace{5ex}  \vol_{g(t)}(B_{g(t)}(y,1)) \geq \eta_0, \hspace{5ex} \inj_{g(t)}\geq\sqrt{\frac{t-t_i}{\alpha}}
    \]
    at every $y\in B_{g(t_i)}(p,R)$,
    for all $t\in[t_i,\min\{T,t_i+\widehat{T}\})$. Using the curvature bound, by \cite[Corollary 3.3]{SimonToppingLocalControl} there is $\gamma_n>0$ depending only on $n$ such that
    \[
    B_{g(t_i)}(p,R) \supseteq B_{g(t)}\left(p,R-\gamma_n\sqrt{\alpha(t-t_i)}\right) \supseteq B_{g(t)}\left(p,R-\gamma_n\sqrt{\alpha\widehat{T}}\right).
    \]
    for all $t\in[t_i,\min\{T,\widehat{T}\})$, provided $R\geq\gamma_n\sqrt{\alpha\widehat{T}}$. Sending $i\rightarrow\infty$, then $R\rightarrow\infty$, gives the conclusion.
\end{proof}

Note that both the $PIC_1$ and nonnegative Ricci curvature conditions needed for Lemma \ref{PROP non-collapsed flow from non-collapsed space, const radius} are implied by the $PIC_2$ curvature condition (see \cite{ToppingPIC1Survey} for more information).
Therefore, if the conditions of Theorem \ref{PROP main corollary} hold, then both Ricci flows satisfy the scaling-invariant curvature bound $|\riem|\leq\frac{\alpha}{t}$, and the injectivity radius lower bound $\inj\geq\sqrt{\frac{t}{\alpha}}$. The scaling-invariant curvature bound, combined with the curvature lower bound on each flow which we are already assuming, will give us access to the following.

\begin{lemma}[Consequence of {\cite[Lemma 3.1]{SimonToppingMollification}}]\label{PROP distance distortion}
Let $(\mfd^n,g(t))_{t\in (0,T]}$, be a smooth complete Ricci flow, satisfying $\ric(\cdot,t) \geq -K , \quad |\riem(\cdot,t)| \leq \alpha/t$. Then
\begin{equation}\label{EQN dist distortion positive times}
    d_{g(t_1)}-\gamma_n\sqrt{\alpha}\left(\sqrt{t_2}-\sqrt{t_1}\right) \leq d_{g(t_2)} \leq e^{K(t_2-t_1)}d_{g(t_1)}
\end{equation}
for all $0<t_1\leq t_2\leq T$, and some $\gamma_n>0$ depending only on $n$. In particular, there is a distance function $d_0$ on $\mfd$ such that $d_t \to d_0$ locally uniformly as $t \searrow 0$, and
\[
d_0 - \gamma_n\sqrt{\alpha t} \leq d_{g(t)} \leq e^{Kt}d_0   \hspace{5ex} \text{ for all } t\in (0,T]  \text{ on } \mfd.
\]  

Furthermore, the topology of $\mfd$ induced by $d_0$ agrees with the manifold topology of $\mfd$. 
\end{lemma}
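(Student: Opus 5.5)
The plan is to derive the lemma from the classical two-sided distance distortion estimates under Ricci flow, and then pass to the limit $t\searrow0$.

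\textbf{Step 1: the upper bound.} Since $\del_t g=-2\ric\leq 2Kg$, the length of any fixed curve $\gamma$ satisfies $\frac{d}{dt}L_{g(t)}(\gamma)\leq K L_{g(t)}(\gamma)$. Integrating from $t_1$ to $t_2$ and then taking the infimum over curves joining two given points gives
\[
d_{g(t_2)}\leq e^{K(t_2-t_1)}d_{g(t_1)}.
\]

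\textbf{Step 2: the lower bound.} Here I would invoke \cite[Lemma 3.1]{SimonToppingMollification}, which is Hamilton's/Perelman's lemma. It says that if $\ric\leq (n-1)r_0^{-2}$ on $B_{g(t)}(x,r_0)\cup B_{g(t)}(y,r_0)$, then
\[
\del_t d_{g(t)}(x,y)\geq -\beta_n r_0^{-1}
\]
in the barrier sense. The curvature bound gives $|\ric|\leq (n-1)\alpha/t$, so we may choose $r_0=\sqrt{t/\alpha}$, which yields $\del_t d_{g(t)}\geq -\beta_n\sqrt{\alpha/t}$. Integrating over $[t_1,t_2]$ then gives
\[
d_{g(t_2)}\geq d_{g(t_1)}-2\beta_n\sqrt\alpha(\sqrt{t_2}-\sqrt{t_1}).
\]
The only care needed is that the derivative is taken in the barrier (upper Dini) sense. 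This is standard, so the time integration goes through.

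\textbf{Step 3: existence of $d_0$.} From Steps 1 and 2, for $0<s<t$ we have
\[
d_{g(s)}\geq e^{-K(t-s)}d_{g(t)}\quad\text{and}\quad d_{g(s)}\leq d_{g(t)}+\gamma_n\sqrt\alpha(\sqrt t-\sqrt s).
\]
Hence $d_{g(s)}$ is bounded on compact sets as $s\searrow0$, and the family is Cauchy. Indeed, the quantity $d_{g(s)}-\gamma_n\sqrt{\alpha s}$ is monotone up to the factor $e^{Kt}$, so the difference $|d_{g(s)}-d_{g(s')}|$ is controlled by $\gamma_n\sqrt\alpha\,|\sqrt s-\sqrt{s'}|$ plus $(e^{K\max(s,s')}-1)$ times a locally bounded quantity. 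This gives a locally uniform limit $d_0$. It is a pseudo-distance, and letting $t_1\searrow0$ in the two inequalities yields the stated bounds $d_0-\gamma_n\sqrt{\alpha t}\leq d_{g(t)}\leq e^{Kt}d_0$.

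\textbf{Step 4: topology, the main subtle point.} We must show that $d_0$ is a genuine distance and induces the manifold topology. From $d_{g(t)}\leq e^{Kt}d_0$, positivity follows: if $d_0(x,y)=0$ then $d_{g(t)}(x,y)=0$, so $x=y$. The same inequality shows that every $d_{g(t)}$-ball contains a $d_0$-ball, so the $d_0$-topology is finer than the manifold topology. For the converse, continuity of $d_0$ with respect to the manifold topology follows because $d_0$ is a locally uniform limit of continuous functions. This means $d_0$-balls are open in the manifold topology, so the two topologies agree.
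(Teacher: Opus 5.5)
Your argument is correct. The paper gives no proof of its own: it cites \cite[Lemma 3.1]{SimonToppingMollification}. Judging by the later use in Lemma~\ref{PROP Reifenberg curvature bound}, with the set $\bigcap_{s}B_{g(s)}(x_0,\frac12)$, that is a \emph{localised} version of the whole statement. It covers possibly incomplete flows with curvature hypotheses only on balls $B_{g(t)}(x_0,1)\Subset\mfd$, and already includes the existence of $d_0$ and the topology claim; the complete version is read off from it.

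You instead reprove the complete case directly, from the Ricci lower bound and the Hamilton--Perelman estimate \cite[Lemma 8.3b]{PerelmanDistanceEvolution}. This is essentially the argument underlying the cited lemma. Your route is self-contained and simpler, because completeness and the global bound $|\riem|\leq\alpha/t$ remove all bookkeeping of which balls remain compactly contained. The cited local version is what is needed when only local information is available, as in Lemma~\ref{PROP Reifenberg curvature bound}.

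Two small corrections:
\begin{itemize}
\item The estimate you invoke in Step~2 is Perelman's Lemma 8.3b, not \cite[Lemma 3.1]{SimonToppingMollification}. When $d<2r_0$ you need the trivial bound $\del_t d\geq-(n-1)r_0^{-2}d\geq-2(n-1)r_0^{-1}$; it holds because the whole minimising geodesic then lies in the union of the two balls.
\item In Step~3 the sign is wrong: the monotone quantity is $d_{g(s)}+\gamma_n\sqrt{\alpha s}$, not $d_{g(s)}-\gamma_n\sqrt{\alpha s}$. It is exactly nondecreasing in $s$, so it already gives the pointwise limit as $s\searrow0$. Your two-sided Cauchy estimate then upgrades this to locally uniform convergence.
\end{itemize}
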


This now allows us to prove Theorem \ref{PROP main corollary}.

\begin{proof}[Proof of Theorem \ref{PROP main corollary}, given Theorem \ref{PROP Reifenberg uniqueness}]
    First, apply Lemma \ref{PROP non-collapsed flow from non-collapsed space, const radius}, to obtain
    \[
    |\riem(g(t))|\leq\frac{\alpha}{t}, \hspace{5ex} \inj_{g(t)}\geq\sqrt{\frac{t}{\alpha}}
    \]
    for some $\alpha>0$ and for $t\in(0,T]$, possibly after reducing $T$.

    Next, by applying Lemma \ref{PROP distance distortion}, we can upgrade from pointed Gromov--Hausdorff convergence to pointwise convergence of the distance function. In particular, we  may assume that the complete Ricci flows $(\mfd^n,g(t))$, $(\twiddle{\mfd}^n,\twiddle{g}(t))$, $t\in(0,T]$, are such that $d_{g(t)}\rightarrow d_0$, $d_{\twiddle{g}(t)}\rightarrow \twiddle{d}_0$ as $t\searrow0$, for distance functions $d_0$ on $\mfd$ and $\twiddle{d}_0$ on $\twiddle{\mfd}$ respectively. We also have that the topology induced by $d_0$ agrees with the manifold topology of $\mfd$, and that the topology induced by $\twiddle{d}_0$ agrees with the manifold topology of $\twiddle{\mfd}$. Then  $(\mfd,g(t),p)\rightarrow(\mfd,d_0,p)$, $(\twiddle{\mfd},\twiddle{g}(t),\twiddle{p})\rightarrow(\twiddle{\mfd},\twiddle{d}_0,\twiddle{p})$ as $t\searrow0$ also in a pointed Gromov--Hausdorff sense, so $(\mfd,d_0,p)$, $(\twiddle{\mfd},\twiddle{d}_0,\twiddle{p})$ are isometric to $(\metsp,d_\metsp,x)$ as metric spaces.

    Finally, take any homeomorphism $\ph:\mfd\rightarrow\twiddle{\mfd}$ such that $d_0=\ph^*\twiddle{d}_0$. Recall that the $PIC_2$ curvature condition is strictly stronger than nonnegative sectional curvature, so possibly after rescaling, all conditions of Theorem \ref{PROP Reifenberg uniqueness} are met. This gives the result.
\end{proof}

Finally, we give the proof of Theorem \ref{PROP diffeo stability thm}, using Theorem \ref{PROP main corollary}.

\begin{proof}[Proof of Theorem \ref{PROP diffeo stability thm}]
    Assume for contradiction that the result is not true. Then there must be a compact Reifenberg uniformly non-collapsed $PIC_2$ limit space $(\mfd,d_0)$, and a sequence $(\twiddle{\mfd}_i,\twiddle{d}_i)_{i\in\nats}$ of compact uniformly non-collapsed $PIC_2$ limit spaces, such that the following is true. All limit spaces have flow-compatible smooth structures and the same $PIC_2$ lower bound $-K$, and further, $d_{GH}((\mfd,d_0),(\twiddle{\mfd}_i,\twiddle{d}_i))\rightarrow0$, but none of the $\twiddle{\mfd}_i$ are diffeomorphic to $\mfd$.

    Next, let $R=\operatorname{diam}(\mfd,d_0)$. We claim that there is $v>0$ such that, after restricting to a tail of the sequence, each of the $(\twiddle{\mfd}_i,\twiddle{d}_i)$ have approximating manifolds with uniform volume lower bound $v$ at scale $2R$. If not, then by using the fact that the $(\twiddle{\mfd}_i,\twiddle{d}_i)$ are each Gromov--Hausdorff limits of $n$-dimensional smooth manifolds, we may find a sequence of $n$-dimensional smooth manifolds which Gromov--Hausdorff converge to $(\mfd,d_0)$, with volume converging to $0$. By \cite[Theorem 3.1]{CheegerColding}, this means that $\operatorname{dim}(\mfd,d_0)\leq n-1$, contradicting the fact that $\operatorname{dim}(\mfd,d_0)=n$ by \cite[Theorem 5.9]{CheegerColding} because $(\mfd,d_0)$ is noncollapsed. Hence we have the required uniform volume lower bound.

    For each $i$, take the Ricci flow $(\twiddle{\mfd}_i,\twiddle{g}_i(t))_{t\in(0,T]}$ coming out of $(\twiddle{\mfd}_i,\twiddle{d}_i)$ given by Theorem \ref{PROP RF existence}.  Here, corollary \ref{PROP new Myers--Steenrod} gives that the flow $\twiddle{g}_i(t)$ is defined on $\twiddle{\mfd}_i$, and not some other manifold. Note that $T=T(2R,v,K)>0$ is independent of $i$, and the flows all satisfy
    \begin{equation}\label{EQN diffeo stability pf curv bounds}
        |\riem(\twiddle{g}_i(t))|\leq\frac{\alpha_0}{t}, \hspace{5ex} \vol_{\twiddle{g}_i(t)}(B_{\twiddle{g}_i(t)}(x,2R)) \geq v_0, \hspace{5ex} \riem(\twiddle{g}_i(t))+K_0\mathrm{I}\in\mathcal{C}_{PIC_2}
    \end{equation}
    for all $t$, for some $\alpha_0,v_0,K_0$ which are also independent of $i$.

    We will argue as in the proof of \cite[Theorem 1.8]{SimonToppingMollification} that we may take a subsequential limit of these flows to get a Ricci flow coming out of $(\mfd,d_0)$.

    The uniform curvature bounds for positive times allow us to apply Hamilton's compactness theorem. That is, after choosing points $\twiddle{p}_i\in\twiddle{\mfd}_i$ and passing to a subsequence, we may find a smooth manifold $\twiddle{\mfd}$, a point $\twiddle{p}\in\twiddle{\mfd}$, and a complete Ricci flow $\twiddle{g}(t)$ on $\twiddle{\mfd}$, defined for $t\in(0,T]$, such that $(\twiddle{\mfd}_i,\twiddle{g}_i(t),\twiddle{p}_i)\rightarrow(\twiddle{\mfd},\twiddle{g}(t),\twiddle{p})$ in the smooth Cheeger-Gromov sense.
    We may pass the curvature bounds in \eqref{EQN diffeo stability pf curv bounds} to $\twiddle{g}(t)$.

    Then by Lemma \ref{PROP distance distortion}, there exists a distance function $\twiddle{d}_0$ on $\twiddle{\mfd}$ such that $d_{\twiddle{g}(t)}\rightarrow \twiddle{d}_0$ locally uniformly on $\twiddle{\mfd}$ as $t\searrow0$, and the topology on $\twiddle{\mfd}$ induced by $\twiddle{d}_0$ agrees with the manifold topology of $\twiddle{\mfd}$. Further, \eqref{EQN dist distortion positive times}, combined with the smooth convergence $(\twiddle{\mfd}_i,\twiddle{g}_i(t),\twiddle{p}_i)\rightarrow(\twiddle{\mfd},\twiddle{g}(t),\twiddle{p})$ for $t>0$, and the Gromov-Hausdorff convergence $(\twiddle{\mfd}_i,\twiddle{d}_i)\rightarrow(\mfd,d_0)$, gives that $(\twiddle{\mfd},\twiddle{d}_0)$ is isometric to $(\mfd,d_0)$ as metric spaces. Therefore, by Corollary \ref{PROP new Myers--Steenrod}, $\twiddle{\mfd}$ is diffeomorphic to $\mfd$. Hence we may instead write that $(\twiddle{\mfd}_i,\twiddle{g}_i(t),\twiddle{p}_i)\rightarrow(\mfd,g(t),p)$ in the smooth Cheeger-Gromov sense, where $g(t)$ is a Ricci flow coming out of $(\mfd,d_0)$, for some $p\in\mfd$.
    
    Since $\mfd$ is compact, the Cheeger-Gromov convergence gives that, after restricting to a tail of the sequence, we may find a sequence of diffeomorphisms $\psi_i:\mfd\rightarrow\twiddle{\mfd}_i$ such that $\psi_i^*\twiddle{g}_i(t)\rightarrow g(t)$ smoothly locally on $\mfd\times(0,T)$. In particular, $\mfd$ is diffeomorphic to the $\twiddle{\mfd}_i$, giving the required contradiction.
\end{proof}

\section{Smoothing maps}\label{SEC smoothing}

We now begin the proof of Theorem \ref{PROP Reifenberg uniqueness}.

Our first aim is to smooth out the map $\ph:(\mfd,g(t_i))\rightarrow(\twiddle{\mfd},\twiddle{g}(t_i))$ at fixed times $t_i$, using a result of Gromov. We begin with \cite[Theorem 1]{Katsuda}, which is an adaptation of \cite[8.19]{Gromov} to give full details. We note also that similar results are obtained using different methods in \cite{PetersAlternativeSmoothing} and \cite{GreeneWuAlternativeSmoothing}.

We first recall the following definition.

\begin{definition}
    A subset $B\setin\mfd$ is \emph{$\delta$-dense} if for every $p\in\mfd$, there exists $b\in B$ such that $d_\mfd(p,b)\leq\delta$.\\
    A subset $B\setin\mfd$ is \emph{$\delta$-discrete} if for every $b_1,b_2\in B$ with $b_1\neq b_2$, we have $d_\mfd(b_1,b_2)\geq\delta$.
\end{definition}

We will subsequently extend the following theorem to give a slightly stronger result, and this will require some reference to details of the proof, so we give a brief outline of the proof below.

\begin{theorem}[{\cite[Theorem 1]{Katsuda}}]\label{PROP Katsuda original}
    Given $n,\Delta,i_0>0$, $0<R<\half\min\{\Delta^{-\half},i_0\}$, for any $\delta>0$, there exists $\eps=\eps(n,\Delta,i_0,R;\delta)>0, a=a(n,\Delta,i_0,R;\delta)>0$ such that the following holds.

    Let $\mfd,\twiddle{\mfd}$ be complete $n$-dimensional Riemannian manifolds with $|\sectional|\leq\Delta$ and $\inj\geq i_0$. Suppose we have $\eps$-dense, $\frac{\eps}{10}$-discrete subsets $N[\eps]=\{m_i\}_{i=1}^{N_\eps}\setin \mfd$, $\twiddle{N}[\eps]=\{\twiddle{m}_i\}_{i=1}^{N_\eps}\setin \twiddle{\mfd}$ with the same (possibly infinite) number of members, such that
    \[
    |d_{\mfd}(m_i,m_j)-d_{\twiddle{\mfd}}(\twiddle{m}_i,\twiddle{m}_j)|<a
    \hspace{8ex} \text{whenever } 0<d_{\mfd}(m_i,m_j)\leq R.
    \]
    Then there exists a diffeomorphism $f:\mfd\rightarrow\twiddle{\mfd}$ such that
    \[
    ||df_x(\xi)|-1|<\delta \hspace{8ex} \text{for every } (x,\xi)\in T\mfd \text{ with } |\xi|=1.
    \]
\end{theorem}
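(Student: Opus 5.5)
The plan is to follow Gromov's center-of-mass construction (as made precise by Katsuda). First I will define a map $f$ from $\mfd$ to $\twiddle{\mfd}$ locally. Then I will average these local definitions into one smooth map and check that its derivative is almost isometric.

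Fix $\delta$ and choose $\eps\ll R$. Take a smooth cutoff $\psi:\reals\to[0,1]$ equal to $1$ on $[0,\half]$ and $0$ on $[1,\infty)$. For $x\in\mfd$, set weights
\[
\phi_i(x)=\psi\!\left(\frac{d_\mfd(x,m_i)}{R'}\right)
\]
with $R'\sim R/4$ fixed, and normalize. By $\frac{\eps}{10}$-discreteness and volume comparison (from $|\sectional|\le\Delta$), only boundedly many $m_i$ contribute near $x$. For the local model, use the injectivity radius bound to work in normal coordinates. For a point $m_j$ near $x$, the candidate image $\twiddle{x}_j$ is determined by requiring that distances from $\twiddle{x}_j$ to the nearby $\twiddle{m}_k$ match the distances from $x$ to the $m_k$, via a Euclidean-type comparison. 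Concretely, following Katsuda, I will define $F:\mfd\to\reals^{N}$ by
\[
F(x)=\bigl(\phi_k(x)\,h(d(x,m_k))\bigr)_k,
\]
with $h$ a smoothed distance, and define $\twiddle F$ on $\twiddle{\mfd}$ similarly. Then $f(x)$ is the point of $\twiddle{\mfd}$ near the expected location that minimizes $|\twiddle F(y)-F(x)|^2$. Equivalently, I can take the Riemannian center of mass of the local candidates $\exp_{\twiddle{m}_j}\circ I_j\circ\exp_{m_j}^{-1}(x)$. Here each $I_j:T_{m_j}\mfd\to T_{\twiddle{m}_j}\twiddle{\mfd}$ is a linear isometry chosen so that the directions to nearby net points approximately correspond. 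Such an $I_j$ exists because the net is $\eps$-dense, the distance errors are at most $a$, and the exponential map is almost Euclidean at scale $R<\half\Delta^{-1/2}$.

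The key steps are these. (1) Build the frames $I_j$ from the almost-matching distance data, by a least-squares fit with error $o(1)$ as $\eps,a\to0$. (2) Show that neighbouring local maps $f_j$ and $f_k$ are $C^1$-close, within $\theta(\eps,a/\eps)$, on overlaps. (3) Argue that the center of mass is well defined, because the candidates lie in a convex ball of radius below $\inj$ and the convexity radius. Differentiate the defining equation
\[
\sum_j\phi_j(x)\exp^{-1}_{f(x)}f_j(x)=0
\]
with the implicit function theorem, to get $df$ close to each $d f_j$, which is itself almost isometric. (4) Deduce that $f$ is a local diffeomorphism. Then construct the reverse map $\twiddle f$ symmetrically and show that $\twiddle f\circ f$ is $C^0$-close to the identity, so $f$ is proper of degree one and hence a diffeomorphism.

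The main obstacle is step (2), together with controlling $d\phi_j$ terms in step (3). Derivatives of the cutoffs are of size $1/R'$, so the $C^0$ discrepancies between the $f_j$ must be small compared to $R'\delta$. In addition, the $C^1$ closeness of the $f_j$ must come from distance data at discrete points only. I will handle this by choosing $a\ll\eps\ll R$. The error in $I_j$ is then about $a/\eps+\eps/R+\Delta R^2$-type comparison terms. After shrinking $R$ relative to $\delta$ (allowed, since the hypothesis $|d-\twiddle d|<a$ at scale $R$ implies the same at any smaller scale), all of these terms are below $\delta$.
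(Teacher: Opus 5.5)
\textbf{Different route.} You glue local chart maps $\exp_{\twiddle m_j}\circ I_j\circ\exp_{m_j}^{-1}$ by a Riemannian centre of mass, in the style of Grove--Karcher. The paper, following Katsuda, instead embeds both manifolds in $\reals^{N_\eps}$ via the coordinates $h(d(\cdot,m_j))$ and sets $f=\twiddle{\psi}^{-1}\circ P\circ\psi$, with $P$ the normal projection. These two constructions are not ``equivalent'', as you write, so you should commit to one. Your steps (1)--(3) are a workable sketch of the forward direction. With $a\ll\eps\ll R'$ and $\Delta R'^2\ll\delta$, they yield an almost-isometric local diffeomorphism $f:\mfd\to\twiddle{\mfd}$.

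\textbf{The gap is step (4).} The hypothesis is not symmetric in $\mfd$ and $\twiddle{\mfd}$. It controls $d_{\twiddle{\mfd}}(\twiddle{m}_i,\twiddle{m}_j)$ only when $d_\mfd(m_i,m_j)\le R$, and says nothing when $\twiddle{m}_i,\twiddle{m}_j$ are close but $m_i,m_j$ are far apart. So the ``symmetric'' reverse map need not exist: for net points $\twiddle m_j$ near $\twiddle y$, the candidates $\exp_{m_j}\circ I_j^{-1}\circ\exp_{\twiddle m_j}^{-1}(\twiddle y)$ can lie in distant parts of $\mfd$, and then their centre of mass is undefined. Worse, your forward $f$ can genuinely be a nontrivial covering.

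\textbf{Counterexample.} Let $L=(N+\half)\eps$ with $L\ge 2i_0$. Take $\mfd=\reals/2L\mathbb{Z}$ with $m_k=k\eps$, and $\twiddle{\mfd}=\reals/L\mathbb{Z}$ with $\twiddle m_k=k\eps \bmod L$, for $0\le k\le 2N$.
\begin{itemize}
\item Both sets are $\eps$-dense and $\frac{\eps}{10}$-discrete with $2N+1$ points; the $\twiddle m_k$ are exactly the multiples of $\frac{\eps}{2}$.
\item Since $R<\half i_0\le\frac{L}{4}$, distances agree exactly whenever $d_\mfd(m_k,m_l)\le R$.
\item Yet the lengths differ by a factor $2$, so no $\delta$-almost isometric diffeomorphism exists for $\delta<\half$.
\end{itemize}
Products with a flat torus give the same phenomenon in higher dimensions. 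So the one-sided hypothesis as printed cannot suffice, and some two-sided input is needed.

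\textbf{What is needed instead.} For example, you could also require the distance estimate whenever $d_{\twiddle{\mfd}}(\twiddle m_i,\twiddle m_j)\le R$. Alternatively, impose a separation condition like $d_{\twiddle{\mfd}}(\ph(x),\ph(y))\ge\rho$, which the paper adds in Theorem~\ref{PROP Katsuda extended}. The paper's normal-projection route needs the same input to make $\psi(\mfd)$ close to $\twiddle{\psi}(\twiddle{\mfd})$. In the example above, $\twiddle{\psi}(\pi(x))=\psi(x)+\psi(x+L)$, where $\pi$ is the covering map, so the two images are not close.

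\textbf{Finishing once you have it.} With a two-sided hypothesis your reverse map is well defined. Even then, argue injectivity directly rather than through degree, since mod-$2$ degree one still allows odd-sheeted coverings of nonorientable manifolds.
\begin{itemize}
\item If $f(x)=f(y)$, then $\twiddle f\circ f\approx\mathrm{id}$ forces $d_\mfd(x,y)$ to be small.
\item $f$ is injective on $R'$-balls, because in normal coordinates its differential stays close to the fixed isometry $I_j$.
\item Surjectivity follows because $f(\mfd)$ is open and closed.
\end{itemize}
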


\begin{proof}[Proof (sketch), see {\cite[Theorem 1]{Katsuda}}.]
     The idea is to embed both $\mfd$ and $\twiddle{\mfd}$ in a large-dimensional Euclidean space $\reals^{N_\eps}$, using a standardised choice of embedding. This embedding is essentially defined by taking localised distance coordinates on each manifold, so it is reasonable to expect that if the distance functions of $\mfd$, $\twiddle{\mfd}$ are similar, then their embeddings will be very close to each other in $\reals^{N_\eps}$, and a normal projection between them will be bi-Lipschitz.

    We now give more detail about this standardised choice of embedding, beginning with the compact case. Suppose $(\mfd,g)$ is such that $|\sectional(g)|\leq\Delta$ and $\operatorname{Inj(g)}\geq i_0$, and consider the $\eps$-dense, $\frac{\eps}{10}$-discrete subset $N[\eps]=\{m_i\}_{i=1}^{N_\eps}\setin \mfd$. Note that a first idea might be to define the embedding $\psi:\mfd\rightarrow\reals^{N_\eps}$ by
    \[(\psi(x))_j=d_\mfd^2(x,m_j),\]
    but we instead make a more sophisticated choice. We define $\psi:\mfd\rightarrow\reals^{N_\eps}$ by 
    \[(\psi(x))_j=h(d_\mfd(x,m_j)),\]
    where $h$ is a suitably chosen smooth cutoff function, which satisfies
    \begin{align*}
        h(t)\equiv1 \hspace{8ex}&\text{when }t\leq0,\\
        h(t)\equiv0 \hspace{8ex}&\text{when }t\geq r,\\
        -\frac{4}{r}\leq h'(t)<0 \hspace{8ex}&\text{for }0<t<r,
    \end{align*}
    for a particular choice of $r$. Since $h(t)\equiv0$ for large $t$, this now means that the study of $\psi$ is entirely a local matter.

    It can be shown that $\psi$ is a smooth embedding with controlled derivative.
    
    The map $\twiddle{\psi}:\twiddle{\mfd}\rightarrow\reals^{N_\eps}$ is defined analogously to $\psi$. Then $\psi(\mfd)$, $\twiddle{\psi}(\twiddle{\mfd})$ are sufficiently close in $\reals^{N_\eps}$ that the normal projection $P:\psi(\mfd)\rightarrow \twiddle{\psi}(\twiddle{\mfd})$ is well-defined, and can further be shown to be a smooth diffeomorphism. Then we take $f:=\twiddle{\psi}^{-1}\circ P\circ \psi$, and the required control on its derivative is obtained.

    The (complete) noncompact case can then be handled by fixing $m_0\in\mfd$, and taking maps $f_b:B_\mfd(m_0,b-2R)\rightarrow B_{\twiddle{\mfd}}(\twiddle{m}_0,b)$ for large $b>0$ in the same way as given above. By the locality of $\psi$, these maps for any two values of $b$ agree wherever both are defined, so we can take the limit as $b\rightarrow\infty$.
\end{proof}

Next, we use information about the proof of Theorem \ref{PROP Katsuda original} to obtain a slightly stronger result. In particular, if $\ph:\mfd\rightarrow\twiddle{\mfd}$ is a map for which we only have control on the distance distortion, then we can construct $f$ to also remain close to $\ph$.

\begin{theorem}\label{PROP Katsuda extended}
    Given $n,\Delta,i_0,\rho>0$, $0<R<\half\min\{\Delta^{-\half},i_0\}$, for any $\delta>0$, there exists $a=a(n,\Delta,i_0,\rho,R;\delta)>0$ such that the following holds.

    Let $\mfd,\twiddle{\mfd}$ be complete $n$-dimensional Riemannian manifolds with $|\sectional|\leq\Delta$ and $\inj\geq i_0$. Suppose we have a map $\ph:\mfd\rightarrow\twiddle{\mfd}$ (not necessarily continuous, injective or surjective) such that
    \[
    |d_{\mfd}(x,y)-d_{\twiddle{\mfd}}(\ph(x),\ph(y))|<a
    \hspace{8ex} \text{whenever } x,y\in\mfd, 0<d_{\mfd}(x,y)\leq R,
    \]
    \[
    d_{\twiddle{\mfd}}(\ph(x),\ph(y))\geq \rho
    \hspace{8ex} \text{whenever } x,y\in\mfd, d_{\mfd}(x,y)\geq R,
    \]
    \[
    \text{and }\twiddle{\mfd}=\bigcup_{x\in\mfd}B_{\twiddle{\mfd}}(\ph(x),a).
    \]
    Then there exists a diffeomorphism $f:\mfd\rightarrow\twiddle{\mfd}$ such that
    \[
    d_{\twiddle{\mfd}}(f(x),\ph(x))<\delta \hspace{8ex} \text{for every } x\in\mfd,
    \]
    \[
    ||df_x(\xi)|-1|<\delta \hspace{8ex} \text{for every } (x,\xi)\in T\mfd \text{ with } |\xi|=1.
    \]
\end{theorem}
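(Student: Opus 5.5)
We will deduce Theorem \ref{PROP Katsuda extended} from Theorem \ref{PROP Katsuda original}, applied with a carefully matched pair of nets, and then extract the closeness $d_{\twiddle{\mfd}}(f(x),\ph(x))<\delta$ from the structure of Katsuda's construction.

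\textbf{Step 1: the nets.} Fix the small parameter $\eps$ (to be taken as the $\eps$ of Theorem \ref{PROP Katsuda original}, possibly shrunk depending on $\delta$). Choose a maximal $\frac{\eps}{5}$-discrete subset $\{m_i\}$ of $\mfd$, and set $\twiddle{m}_i:=\ph(m_i)$.
\begin{itemize}
\item The first hypothesis, with $a\ll\eps$, gives that the $\twiddle{m}_i$ are $(\frac{\eps}{5}-a)\geq\frac{\eps}{10}$-discrete at distances $\leq R$.
\item The second hypothesis gives discreteness at larger separations, provided $\rho\geq\eps/10$; otherwise we shrink $\eps$ depending on $\rho$, which is why $a$ depends on $\rho$.
\item Density uses the covering hypothesis $\twiddle{\mfd}=\bigcup_x B(\ph(x),a)$. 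Any $\twiddle{y}$ is within $a$ of some $\ph(x)$, and $x$ is within $\frac{\eps}{5}$ of some $m_i$, so $d(\twiddle{y},\twiddle{m}_i)\leq a+\frac{\eps}{5}+a<\eps$.
\item The members are distinct and the index sets coincide.
\item The distance-comparison hypothesis of Theorem \ref{PROP Katsuda original} is immediate from the first assumption on $\ph$.
\end{itemize}
Hence Theorem \ref{PROP Katsuda original} yields a diffeomorphism $f=\twiddle{\psi}^{-1}\circ P\circ\psi$ with $||df(\xi)|-1|<\delta$.

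\textbf{Step 2: $f$ is close to $\ph$.} This is the main obstacle, since Theorem \ref{PROP Katsuda original} only records the derivative bound, so we must reopen its proof. Fix $x\in\mfd$ and pick $m_i$ with $d(x,m_i)\leq\eps/5$.

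First, compare $\psi(x)$ with $\twiddle{\psi}(\ph(x))$. Their $j$-th coordinates are $h(d(x,m_j))$ and $h(d(\ph(x),\ph(m_j)))$. Only indices with $d(x,m_j)<r\leq R$ contribute, up to the discreteness control from the second hypothesis. For those indices the arguments differ by less than $a$, so each coordinate differs by at most $4a/r$. The number of relevant indices is bounded by a packing constant $C(n,\Delta,\eps,r)$, obtained from volume comparison. Hence
\[
|\psi(x)-\twiddle{\psi}(\ph(x))|\leq C a.
\]

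Next, $P\psi(x)$ is the nearest-point projection onto $\twiddle{\psi}(\twiddle{\mfd})$. So $|P\psi(x)-\psi(x)|\leq|\psi(x)-\twiddle{\psi}(\ph(x))|$, and therefore
\[
|P\psi(x)-\twiddle{\psi}(\ph(x))|\leq 2Ca.
\]

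Finally, we need a local bi-Lipschitz (injectivity-at-scale) estimate for $\twiddle{\psi}$. Katsuda's proof shows that $\twiddle{\psi}$ is an embedding whose differential is bounded below and which separates points at distance $\geq\eps$ by a definite amount $c(n,\Delta,i_0,\eps)$. Using this, $|\twiddle{\psi}(y)-\twiddle{\psi}(z)|\leq 2Ca$ forces $d(y,z)\leq C'a$ once $a$ is small. Applying this with $y=f(x)$, $z=\ph(x)$ gives $d(f(x),\ph(x))\leq C'a<\delta$ after shrinking $a$. The subtle point is that $\ph(x)$ need not lie in the range where local injectivity is known a priori. We handle this using the coordinate $j=i$, which localises both points near $\twiddle{m}_i$.

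\textbf{Step 3: noncompact case.} The exhaustion argument from the sketch is unchanged. The estimates above are local, so they pass to the limit map.
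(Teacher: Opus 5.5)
Your Step 1 and the first half of Step 2 follow the paper's argument exactly. You take nets with $\twiddle{m}_i=\ph(m_i)$, prove the coordinatewise bound $|\psi(x)-\twiddle{\psi}(\ph(x))|\leq\frac{4a}{r}\widehat{N}_{\eps}^{1/2}$, and use the nearest-point property of $P$ to get $|\twiddle{\psi}(f(x))-\twiddle{\psi}(\ph(x))|\leq\frac{8a}{r}\widehat{N}_{\eps}^{1/2}$.

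The routes diverge when converting this Euclidean closeness back into $d_{\twiddle{\mfd}}$-closeness. The paper argues only coarsely. Suppose $d_{\twiddle{\mfd}}(f(x),\ph(x))\geq 2r$. By volume comparison there are at least $C_1\widehat{N}_{\eps}$ net points in $B_{r/2}(\ph(x))$. Each gives a coordinate that vanishes at $f(x)$ but is at least $\half$ at $\ph(x)$, using the paper's stipulation $h(r/2)=\half$. This contradicts the upper bound once $a$ is small, so $d<2r$. The paper then uses the freedom in Katsuda's construction to take $r\leq\delta/2$.

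You instead invoke uniform quantitative injectivity of $\twiddle{\psi}$ to get the sharper bound $C'a$ without altering $r$ or $h$. That buys a linear-in-$a$ estimate, but it rests on an input that Theorem \ref{PROP Katsuda original} does not record and that you only assert. As stated the inference is also incomplete. A lower bound on $d\twiddle{\psi}$ plus separation at distances $\geq\eps$ does not give bi-Lipschitz control below scale $\eps$ without uniform second-order control, in effect a tubular-neighbourhood (reach) bound for $\twiddle{\psi}(\twiddle{\mfd})$. The separation for $\eps\leq d<2r$ would itself have to be extracted from Katsuda's injectivity argument.

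Your ``coordinate $j=i$'' localisation is in fact a one-coordinate version of the paper's step, and it closes the proof on its own:
\begin{itemize}
\item $\psi(x)_i=h(d(x,m_i))\geq 1-\frac{4\eps}{5r}$, and $|\twiddle{\psi}(f(x))_i-\psi(x)_i|\leq\frac{4a}{r}\widehat{N}_{\eps}^{1/2}$.
\item Together these force $h(d(f(x),\twiddle{m}_i))>0$, i.e.\ $d(f(x),\twiddle{m}_i)<r$.
\item Meanwhile $d(\ph(x),\twiddle{m}_i)<\frac{\eps}{5}+a$, so $d(f(x),\ph(x))<r+\frac{\eps}{5}+a<2r$.
\end{itemize}
Taking $r\leq\delta/2$, as the paper does, finishes the proof without any bi-Lipschitz estimate.
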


\begin{proof}
    Suppose we have $\mfd,\twiddle{\mfd},\ph$ as in the statement of the theorem, which satisfy the required estimates for some $n,\Delta,i_0,a,\rho>0$, $0<R<\half\min\{\Delta^{-\half},i_0\}$. Let $\eps=\eps(n,\Delta,i_0,R;\delta)>0$ be as given by Theorem \ref{PROP Katsuda original}. In the proof of \cite[Theorem 1]{Katsuda}, $\eps$ may be reduced as much as needed, with the only impact being the need to further reduce $a$ as a result, and $a$ can be reduced as much as needed with no further consequences. Therefore, we may assume without loss of generality that $a\leq\frac{\eps}{10}\leq\rho$.
    
    Take a $\frac{2\eps}{10}$-discrete $\frac{8\eps}{10}$-dense subset $N[\eps]=\{m_i\}_{i=1}^{N_{\eps}}\setin \mfd$, where $N_\eps\in\nats\cup\{\infty\}$, and define $\twiddle{m}_i:=\ph(m_i)$ for each $i$. Then $\twiddle{N}[\eps]=\{\twiddle{m}_i\}_{i=1}^{N_{\eps}}\setin \twiddle{\mfd}$ and $N[\eps]$ are both $\frac{\eps}{10}$-discrete $\eps$-dense subsets of their respective manifolds. Further, note that
    \[
    |d_{\mfd}(m_i,m_j)-d_{\twiddle{\mfd}}(\twiddle{m}_i,\twiddle{m}_j)|<a
    \hspace{8ex} \text{whenever } 0<d_{\mfd}(m_i,m_j)\leq R.
    \]

    Hence, we may apply Theorem \ref{PROP Katsuda original} to see that provided $a$ is sufficiently small, depending on $n,\Delta,i_0,\rho,R,\delta$, there is a diffeomorphism $f:\mfd\rightarrow\twiddle{\mfd}$ satisfying the required derivative bound. It remains to establish that $f$ is distance-close to $\ph$, which requires details from the proof of \cite[Theorem 1]{Katsuda} (see the proof of Theorem \ref{PROP Katsuda original} above).
    
    The diffeomorphism $f$ is given by $f=\twiddle{\psi}^{-1}\circ P\circ \psi$, where $\psi$, $\twiddle{\psi}$ are embeddings of $\mfd$, $\twiddle{\mfd}$ respectively into $\reals^{N_{\eps}}$, and $P$ is the normal projection from $\psi(\mfd)$ to $\twiddle{\psi}(\twiddle{\mfd})$. The embedding $\psi:\mfd\rightarrow\reals^{N_\eps}$ is defined using $N[\eps]$, that is $\psi:\mfd\rightarrow\reals^{N_\eps}$ is given by
    \[(\psi(x))_j=h(d_\mfd(p,m_j)),\]
    where $h$ is a suitably chosen smooth cutoff function (see \cite[\S2]{Katsuda}). In particular, for $r=r(n,\Delta,i_0,R;\delta)\in(0,R)$, we have\footnote{The precise choice of $r$ is made in \cite[\S8]{Katsuda}.}
    \begin{align*}
        h(t)\equiv1 \hspace{8ex}&\text{when }t\leq0,\\
        h(t)\equiv0 \hspace{8ex}&\text{when }t\geq r,\\
        -\frac{4}{r}\leq h'<0 \hspace{8ex}&\text{for }t\in\reals.
    \end{align*}
    In \cite{Katsuda}, $r$ is chosen independently of $\eps,a$, while the choices of $\eps,a$ depend on $r$.
    We additionally stipulate that
    \[
    h\left(\frac{r}{2}\right)=\half.
    \]
    
    As in \cite[\S3]{Katsuda}, by volume comparison results (see, for example, \cite[Theorem 3.101]{GHLRiemannianGeometry}),
    there is $\widehat{N}_{\eps}=\widehat{N}_{\eps}(n,\Delta,i_0,R)$ such that $\sup_{x\in\mfd}\#(B_{2r}(x)\cap N[\eps]) \leq \widehat{N}_{\eps}$. Then the image under $\psi$ of an $r$-ball around any point $x\in\mfd$ is contained within a subspace of $\reals^{N_{\eps}}$ of dimension at most $\widehat{N}_{\eps}$.

    Given $x\in\mfd$, we have (after possibly reordering $N[\eps]$)
    \begin{align*}
        d_{\reals^{N_{\eps}}}^2(\psi(x),\twiddle{\psi}(\ph(x)))
        &=\sum_{i=1}^{N_{\eps}} \left( h(d_\mfd(x,m_i)) - h(d_{\twiddle{\mfd}}(\ph(x),\twiddle{m}_i)) \right)^2 \\
        &\leq\sum_{i=1}^{\widehat{N}_{\eps}} \Big( a\sup_t|h'(t)| \Big)^2 \\
        &\leq \left( \frac{4a}{r}\right)^2 \widehat{N}_{\eps},
    \end{align*}
    so
    \begin{equation}\label{EQN Katsuda distance x to phi}
        d_{\reals^{N_{\eps}}}(\psi(x),\twiddle{\psi}(\ph(x)))\leq\frac{4a}{r}\widehat{N}_{\eps}^\half
    \end{equation}
    for each $x\in\mfd$, where $d_{\reals^{N_{\eps}}}$ is the Euclidean distance in $\reals^{N_\eps}$.

    In particular, we have that $d_{\reals^{N_\eps}}(\psi(\mfd),\twiddle{\psi}(\twiddle{\mfd}))\leq\frac{4a}{r}\widehat{N}_{\eps}^\half$. Recalling that $P$ is the normal projection and $P(\psi(x))=\twiddle{\psi}(f(x))$, this means that
    \begin{equation}\label{EQN Katsuda distance x to f}
        d_{\reals^{N_{\eps}}}(\psi(x),\twiddle{\psi}(f(x)))\leq\frac{4a}{r}\widehat{N}_{\eps}^\half
    \end{equation}
    for each $x\in\mfd$.
    
    Combining \eqref{EQN Katsuda distance x to phi} and \eqref{EQN Katsuda distance x to f} gives
    \begin{equation}\label{EQN Katsuda distance f to phi}
        d_{\reals^{N_{\eps}}}(\twiddle{\psi}(f(x)),\twiddle{\psi}(\ph(x)))\leq\frac{8a}{r}\widehat{N}_{\eps}^\half.
    \end{equation}

    We now use this to establish a bound for $d_{\twiddle{\mfd}}(f(x),\ph(x))$, following the method of \cite[Lemma 4.2]{Katsuda}. Assume for contradiction that
    \begin{equation}\label{EQN Katsuda contradiction assumption}
        d_{\twiddle{\mfd}}(f(x),\ph(x))\geq2r.
    \end{equation}
    As in \cite[Lemma 3.1]{Katsuda}, by volume comparison results (again see \cite[Theorem 3.101]{GHLRiemannianGeometry}) we may find $C_1=C_1(n,\Delta,i_0,R,r)>0$ such that $\inf_{x\in\twiddle{\mfd}}\#(B_{r/2}(x)\cap N[\eps]) \geq C_1\widehat{N}_{\eps}$. Although we do not need it here, we note that $C_1$ can be chosen independent of $\eps$. Then if $m_i\in B_{r/2}(\ph(x))\cap N[\eps]$, then $h(d_{\twiddle{\mfd}}(\ph(x),m_i))\geq\half$ by our choice of $h$, and $h(d_{\twiddle{\mfd}}(f(x),m_i))=0$ by \eqref{EQN Katsuda contradiction assumption}. Therefore
    \begin{align*}
        d_{\reals^{N_\eps}}^2(\twiddle{\psi}(f(x)),\twiddle{\psi}(\ph(x)))
        &=\sum_{i=1}^{N_{\eps}} \left( h(d_\mfd(f(x),m_i)) - h(d_{\twiddle{\mfd}}(\ph(x),\twiddle{m}_i)) \right)^2 \\
        &\geq \frac{1}{4}C_1\widehat{N}_{\eps},
    \end{align*}
    so
    \[
    d_{\reals^{N_\eps}}(\twiddle{\psi}(f(x)),\twiddle{\psi}(\ph(x))) \geq \half C_1^\half\widehat{N}_{\eps}^\half.
    \]
    This contradicts \eqref{EQN Katsuda distance x to f} if we reduce $a$. Hence \eqref{EQN Katsuda contradiction assumption} cannot hold, so
    \[
    d_{\twiddle{\mfd}}(f(x),\ph(x))\leq2r.
    \]
    Since the choice of $r$ already depends on $\delta$, we may further require $r\leq\delta/2$, giving the result.
\end{proof}

\section{Ricci-harmonic map heat flow} \label{SEC HMHF}

The aim of this section is to obtain an existence result, which will ultimately allow us to flow the ``smoothed-out" maps obtained in \S\ref{SEC smoothing} by the Ricci-harmonic map heat flow \eqref{EQN HMHF}. In order to achieve this, we will make use of the precise statements of several results from \cite{MCLeePaper}, and a variation of a localised maximum principle from \cite{LeeTamMaxPrinc}, which are collected in the appendix for convenience.

We note that since we will ultimately be applying the results in this section starting from a small positive time, the required global bounds on the curvature and its derivatives will hold for our application. Further, while the existence time depends on these bounds, for our application we will be iterating this result to get the longest possible existence time with good derivative estimates, so this is not a concern.

Finally, recall that given a time-dependent family of diffeomorphisms $F:\Omega\times[0,T]\rightarrow\twiddle{\mfd}$, where $\Omega\setin\mfd$, and a particular $t\in[0,T]$, we define $F_t$ by $F_t(\cdot):=F(\cdot,t)$. In particular, $F_0(\cdot):=F(\cdot,0)$. Further, quantities such as $|dF_t|$, $|D^kdF_t|$ are always evaluated with respect to the relevant metrics at time $t$ unless otherwise stated, that is $|dF_t|=|dF_t|_{g(t),\twiddle{g}(t)}$.

\begin{proposition}\label{PROP Global HMHF existence}
    Suppose $(\mfd^n, g(t))$, $(\twiddle{\mfd}^n, \twiddle{g}(t))$, $t \in [0, T]$ are two smooth complete solutions to the Ricci flow, such that for all $t\in[0,T]$ and some $K,\alpha,\rho>0$,
    \begin{enumerate}[label=(\roman*)]
        \item $\ric(\twiddle{g}(t))\geq-K\twiddle{g}(t)$,
        \item $\sup_{\mfd}|\riem(g(t))| + \sup_{\twiddle{\mfd}}|\riem(\twiddle{g}(t))| \leq \min\{\alpha t^{-1}, \rho^{-2}\}$,
        \item $\sup_{\mfd}|\nabla\riem(g(t))| \leq \rho^{-3}$,
        \item $\sup_{\twiddle{\mfd}}|\twiddle{\nabla}^k\riem(\twiddle{g}(t))| < \infty$ for all $k\geq0$.  \label{LIST Global HMHF existence bdd curv derivs assumption}
    \end{enumerate}
    Let $\Lambda_0>0$, and let $f:\mfd\rightarrow\twiddle{\mfd}$ be a smooth map with $|df|_{g(0),\twiddle{g}(0)}^2\leq\Lambda_0$.
    
    Then there exists $T_0(n,\Lambda_0)>0$, and a smooth solution $F:\mfd\times[0,\min\{T,T_0\rho^2\}]\rightarrow\twiddle{\mfd}$ to \eqref{EQN HMHF} with $F_0=f$, such that
    \begin{equation}\label{EQN Global HMHF existence, deriv result}
        |dF_t|^2\leq10\Lambda_0
    \end{equation}
    for all $t\in[0,\min\{T,T_0\rho^2\}]$. Further, for each $k\geq1$, there exists $L_k(n,\alpha,\Lambda_0,k)>0$ such that
    \begin{equation}\label{EQN Global HMHF existence, higher deriv result}
        |D^kdF_t|^2 \leq L_kt^{-k}
    \end{equation}
    on $\mfd\times(0,\min\{T,T_0\rho^2\}]$, and
    \begin{equation}\label{EQN Global HMHF existence, dist result}
        d_{\twiddle{g}(t)}(f(x),F_{t}(x)) \leq 2e^{2Kt}\sqrt{L_1t}
    \end{equation}
    for all $(x,t)\in\mfd\times[0,\min\{T,T_0\rho^2\}]$.
\end{proposition}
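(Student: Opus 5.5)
The plan is to follow the structure of \cite[\S2]{MCLeePaper}, whose statements are collected in the appendix. I would construct $F$ by exhaustion and obtain all estimates from maximum principle arguments applied to Bochner-type formulas.

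First, existence. Fix an exhaustion of $\mfd$ by smooth precompact domains $\Omega_j$. On each $\Omega_j\times[0,T]$, solve the Dirichlet problem for \eqref{EQN HMHF} with $F=f$ on the parabolic boundary. This is possible for a short time by standard quasilinear parabolic theory, since the target curvature and its derivatives are bounded by (ii) and \ref{LIST Global HMHF existence bdd curv derivs assumption}. The solution continues as long as $|dF|$ stays bounded.

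Second, the gradient bound. The energy density $e=|dF_t|^2$ satisfies, along the Ricci flow on both ends, the Bochner formula
\[
\heatop e\leq -2|DdF|^2 + C(n)\,\sup|\riem(\twiddle{g}(t))|\, e^2 ,
\]
where the Ricci terms of the domain cancel against $\del_t g$. By (ii), $|\riem(\twiddle g)|\le\rho^{-2}$, so ODE comparison gives
\[
e\le \frac{\Lambda_0}{1-C\Lambda_0\rho^{-2}t},
\]
which is at most $10\Lambda_0$ for $t\le T_0\rho^2$ with $T_0=T_0(n,\Lambda_0)$. On $\Omega_j$ one needs control of $e$ at the boundary, which is handled by barriers near $\del\Omega_j$ (this is where (iii) enters). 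Alternatively, I would use the localised maximum principle of \cite{LeeTamMaxPrinc} from the appendix, with a cutoff built from a distance-like function whose Laplacian is controlled by (ii)--(iii), to get estimates uniform in $j$ on compact sets. The uniform bound on $|dF|$ then lets me take a subsequential limit $j\to\infty$, giving a global solution with \eqref{EQN Global HMHF existence, deriv result}.

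Third, higher derivatives. By induction on $k$, I would consider the Shi-type quantities
\[
\Phi_k=t^k|D^kdF|^2\bigl(A_k+t^{k-1}|D^{k-1}dF|^2\bigr)
\]
(or simply $t|DdF|^2+Ae$ for $k=1$). Their evolution involves the curvatures, bounded by $\alpha/t$, and derivatives of curvature. Here the key point is to use only the scale-invariant bounds $|\nabla^l\riem|\le C(l,\alpha)t^{-1-l/2}$, which follow from Shi's local estimates given $|\riem|\le\alpha/t$. This makes $L_k$ depend only on $n,\alpha,\Lambda_0,k$. The maximum principle, justified either by boundedness at each positive time or by the localised version, then yields \eqref{EQN Global HMHF existence, higher deriv result}.

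Fourth, the distance bound. Since
\[
|\del_tF|=|\lap F|\le\sqrt n\,|DdF|\le\sqrt{nL_1/t},
\]
and $\twiddle g(t)$ changes by at most $e^{2Kt}$ in the relevant direction by (i), integrating along $s\in[0,t]$ in the metric $\twiddle g(t)$ gives
\[
d_{\twiddle g(t)}(f(x),F_t(x))\le e^{2Kt}\int_0^t\sqrt{L_1/s}\,ds\le 2e^{2Kt}\sqrt{L_1t},
\]
after absorbing $\sqrt n$ into $L_1$.

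I expect the main obstacle to be the first-derivative estimate near $t=0$, where $DdF$ is only bounded in terms of $f$'s smoothness rather than uniformly. The $k=1$ Shi-type argument must give $t|DdF|^2\le L_1$ independently of $f$'s higher derivatives. Obtaining this needs the good $-2|DdF|^2$ term in the Bochner formula for $e$ to absorb the bad terms, together with a careful localisation so that the result holds on the complete noncompact manifold.
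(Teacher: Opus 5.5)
The overall architecture of your proposal matches the paper: local solutions on an exhaustion with $|dF|^2\le 10\Lambda_0$, Lee's scale-invariant bounds $|D^kdF|^2\le L_kt^{-k}$, a limit, and integration of $|\del_sF_s|\le\sqrt{L_1/s}$. Your distance computation is exactly the paper's. The genuine gap is in the passage to the limit.

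Every estimate you propose that is uniform along the exhaustion is either the first-order bound $|dF|^2\le10\Lambda_0$ or blows up like $t^{-k}$ as $t\searrow0$. With only these, Arzel\`a--Ascoli gives a limit that is smooth for $t>0$ but merely $\tfrac12$-H\"older in time at $t=0$, with $F_0=f$ only in $C^0$. Nothing in your argument shows even $dF_t\to df$, let alone that $F$ is smooth on $\mfd\times[0,\min\{T,T_0\rho^2\}]$, as the statement asserts. This matters later: the base case of the induction in Proposition \ref{PROP HMHF short time diffeo} uses $F_t^*\twiddle{g}(t)\to f^*\twiddle{g}(0)$ locally uniformly as $t\searrow0$. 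You observe that near $t=0$ the derivatives of $F$ are controlled only through the smoothness of $f$, but you never prove such bounds uniformly in $j$. Relatedly, the $t^{-k}$ bounds must be proved for the approximants, locally and uniformly in $j$, before taking the limit. Otherwise the limit in your second step is not even known to be a smooth solution for $t>0$, and your third step has nothing to act on.

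The paper supplies the missing step by induction on $k$ over nested balls $B_{g(0)}(x_0,2^{k-\ell}r)$. Suppose $|D^\ell dF^R|$ is bounded there uniformly for $t\in[0,\min\{T,T_0\rho^2\}]$ and all $\ell<k$. Then Lemma \ref{PROP Lee HMHF evo eqn for derivs}, the smoothness of $g$ up to $t=0$, and assumption (iv) give the \emph{linear} inequality $\heatop|D^kdF^R|^2\le C\left(|D^kdF^R|^2+1\right)$. Subtract the solution $\eta$ of $\eta'=C(\eta+1)$ with $\eta(0)=\sup|D^kdf|^2$, and apply the localised maximum principle Theorem \ref{PROP Lee Tam max princ improved} to $|D^kdF^R|^2-\eta$. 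This gives $|D^kdF^R|^2\le\eta(t)+r^{-2(m+k)}t^m$ on $B_{g(0)}(x_0,r)$ for a suitable $m$, uniformly in both $t$ and $R$. This is precisely why a version of the Lee--Tam principle tolerating the a priori growth $\ph\le L_kt^{-k}$ was needed. Interior, up-to-$t=0$ parabolic Schauder estimates in target charts would also work, but some such argument must be given.

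Some smaller remarks:
\begin{itemize}
\item Your ``main obstacle'', the bound $t|DdF|^2\le L_1$ independent of the higher derivatives of $f$, is exactly Proposition \ref{PROP Lee HMHF bound on derivs} and can simply be cited.
\item Your second step is not yet an argument. Dirichlet data control only the tangential part of $dF$ on $\del\Omega_j$, and the superlinear inequality $\heatop e\le C\rho^{-2}e^2$ cannot be localised by a plain cutoff. The paper instead takes the bound $10\Lambda_0$ on large balls directly from Proposition \ref{PROP Local HMHF existence}.
\item Your Bochner inequality is missing the term $-2g^{ij}\twiddle{\ric}(F_i,F_j)$ from the evolving target. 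It is bounded by $C|\riem(\twiddle{g})|\,e$, so this is harmless.
\end{itemize}
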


\begin{proof}
    Let $x_0\in\mfd$ and $R>0$. Apply Proposition \ref{PROP Local HMHF existence} to obtain $T_0(n,\Lambda_0)>0$ and a solution $F^R:B_{g(0)}(x_0,8R)\times[0,\min\{T,T_0\rho^2\}]\rightarrow\twiddle{\mfd}$ to \eqref{EQN HMHF}, such that $F^R_0=f$ and
    \[
    |dF^R_t|^2\leq 10\Lambda_0
    \]
    for all $t\in[0,\min\{T,T_0\rho^2\}]$. We will ultimately send $R\rightarrow\infty$ and take a subsequential limit of the $F^R$ to obtain the required $F$. We begin by obtaining the estimates which will establish \eqref{EQN Global HMHF existence, higher deriv result} and \eqref{EQN Global HMHF existence, dist result} after taking limits.
    
    Provided $R$ is large enough, by Proposition \ref{PROP Lee HMHF bound on derivs} we have
    \begin{equation}\label{EQN HMHF global existence, higher deriv bound}
        |D^kdF^R_t|^2 \leq L_kt^{-k}
    \end{equation}
    on $B_{g(0)}(x_0,2R)\times(0,\min\{T,T_0\rho^2\}]$ for each $k$, where $L_k=L_k(n,\alpha,\Lambda_0,k)$. This will give \eqref{EQN Global HMHF existence, higher deriv result}.

    Now take $x\in B_{g(0)}(x_0,2R)$, and consider the curve $\gamma(s)=F^R_{s}(x), s\in[0,\min\{T,T_0\rho^2\}]$. Then using the estimate above and the Ricci flow equation, for $t\in(0,\min\{T,T_0\rho^2\}]$ we have
    \begin{align}
        d_{\twiddle{g}(t)}(f(x),F^R_t(x))
        &\leq \int_0^t|\gamma'(s)|_{\twiddle{g}(t)} \,\mathrm{d}s \nonumber\\
        &= \int_0^t|\del_sF^R_s(x)|_{\twiddle{g}(t)} \,\mathrm{d}s \nonumber\\
        &\leq \int_0^te^{2K(t-s)}|\del_sF^R_s(x)|_{\twiddle{g}(s)} \,\mathrm{d}s \nonumber\\
        &\leq e^{2Kt} \int_0^t\sqrt{L_1}s^{-1/2} \,\mathrm{d}s \nonumber\\
        &= 2e^{2Kt}\sqrt{L_1t}. \label{EQN HMHF dist calc}
    \end{align}
    This will give \eqref{EQN Global HMHF existence, dist result}.

    Finally, we must obtain the estimates required to give smooth convergence when we send $R\rightarrow\infty$. Currently, \eqref{EQN HMHF global existence, higher deriv bound} gives smooth convergence of the $F^R$ away from $t=0$, but to get smooth convergence back to $t=0$, we need local bounds on the derivatives of $F^R$ which are uniform in $t$.

    Let $k\geq1$ and $r>0$. Suppose $R$ is large enough that $R>2^kr$. We will inductively obtain a bound for $|D^kdF^R_t|^2$ on $B_{g(0)}(x_0,r)\times[0,\min\{T,T_0\rho^2\}]$. Suppose we have already obtained the bounds
    \[
    |D^\ell dF^R_t|^2 \leq C_{n,r,g,\twiddle{g},f,T,\ell}
    \]
    on $B_{g(0)}(x_0,2^{k-\ell}r)\times[0,\min\{T,T_0\rho^2\}]$ for $0\leq\ell<k$. We are assuming that $g$ is smooth back to $t=0$, so for each $\ell$ we have $|\nabla^\ell\riem(g(t))|\leq C_{r,g,\ell}$ on $B_{g(0)}(x_0,2r)\times[0,\min\{T,T_0\rho^2\}]$. Assumption (\ref{LIST Global HMHF existence bdd curv derivs assumption}) of the proposition gives that $|\twiddle{\nabla}^\ell\riem(\twiddle{g}(t))|\leq C_{\twiddle{g},\ell}$ on $\twiddle{\mfd}\times[0,\min\{T,T_0\rho^2\}]$, for each $\ell$. Hence by Lemma \ref{PROP Lee HMHF evo eqn for derivs}, we have
    \[
    \heatop |D^kdF^R_t|^2 \leq C_{n,r,g,\twiddle{g},f,T,k}\left(|D^kdF^R_t|^2+1\right)
    \]
    on $B_{g(0)}(x_0,2r)\times[0,\min\{T,T_0\rho^2\}]$. Further, since $f$ is smooth, we have
    \[
    |D^kdf|^2 \leq C_{r,f,k}
    \]
    on $B_{g(0)}(x_0,2r)$.
    
    Now take $\eta:[0,\infty)\rightarrow\reals$ to solve
    \[
    \eta'(t) = C_{n,r,g,\twiddle{g},f,T,k}\left(\eta(t)+1\right)
    \]
    with $\eta(0)=C_{r,f,k}$. Then $|D^kdF^R_t|^2-\eta$ solves
    \[
    \heatop \left(|D^kdF^R_t|^2-\eta\right) \leq C_{n,r,g,\twiddle{g},f,T,k}\left(|D^kdF^R_t|^2-\eta\right)
    \]
    on $B_{g(0)}(x_0,2r)\times[0,\min\{T,T_0\rho^2\}]$, with $|D^kdF^R_t|^2-\eta\leq0$ when $t=0$. Finally, by \eqref{EQN HMHF global existence, higher deriv bound}, we have the bound
    \[
    |D^kdF^R_t|^2-\eta \leq L_kt^{-k}
    \]
    on $B_{g(0)}(x_0,2r)\times[0,\min\{T,T_0\rho^2\}]$.
    Therefore, by Theorem \ref{PROP Lee Tam max princ improved}, we can choose $\ell$, depending only on $n,r,g,\twiddle{g},f,T,k$, such that
    \begin{align}
        |D^kdF^R_t|^2
        &\leq \eta(t) + r^{-2(\ell+k)}t^\ell \nonumber\\
        &\leq C_{n,r,g,\twiddle{g},f,T,k} \label{EQN HMHF global existence, local uniform deriv bd}
    \end{align}
    on $B_{g(0)}(x_0,r)\times[0,\min\{T,T_0\rho^2\}]$. This concludes the induction.

    Hence for fixed $r>0$ and $k\in\nats$, if $R$ is sufficiently large then we have a bound for $|D^kdF^R_t|^2$ on $B_{g(0)}(x_0,r)\times[0,\min\{T,T_0\rho^2\}]$ which is uniform in $R$. Therefore we may send $R\rightarrow\infty$ and take a smooth subsequential limit of the $F^R$ to obtain the desired $F$.
\end{proof}

In \S\ref{SEC R-DT flow}, we will be comparing a solution to \eqref{EQN Ricci-DeTurck} on $\mfd$ obtained using $F_t$, to a solution obtained in another way. This means we need to know that $F_t$ continues to exist for a long time, which motivates the following.

\begin{proposition}\label{PROP Long global HMHF existence}
    Suppose $(\mfd^n, g(t))$, $(\twiddle{\mfd}^n, \twiddle{g}(t))$, $t \in [0, T]$ are two smooth complete solutions to the Ricci flow, such that for all $t\in[0,T]$ and some $K,\alpha>0$,
    \begin{enumerate}[label=(\roman*)]
        \item $\ric(\twiddle{g}(t))\geq-K\twiddle{g}(t)$,
        \item $\sup_{\mfd}|\riem(g(t))| + \sup_{\twiddle{\mfd}}|\riem(\twiddle{g}(t))| \leq \alpha t^{-1}$,
        \item $\sup_{\mfd}|\riem(g(t))| + \sup_{\mfd}|\nabla\riem(g(t))| < \infty$,
        \item $\sup_{\twiddle{\mfd}}|\twiddle{\nabla}^k\riem(\twiddle{g}(t))| < \infty$ for all $k\geq0$.
    \end{enumerate}
    Let $\lambda_0>0$, and let $f:\mfd\rightarrow\twiddle{\mfd}$ be a smooth map with $|df|_{g(0),\twiddle{g}(0)}^2\leq\lambda_0$. Let $\Lambda_0>10\lambda_0$.
    
    Then there exists a smooth solution $F:\mfd\times[0,\widehat{T}]\rightarrow\twiddle{\mfd}$ to \eqref{EQN HMHF} with $F_0=f$, where $0<\widehat{T}\leq T$ is the maximal time such that
    \begin{equation}\label{EQN Long global HMHF existence, deriv result}
        |dF_t|^2\leq\Lambda_0
    \end{equation}
    for $t\in[0,\widehat{T}]$.
    
    Further, for each $k\geq1$, there exists $L_k(n,\alpha,\Lambda_0,k)>0$ such that
    \begin{equation}\label{EQN Long global HMHF existence, higher deriv result}
        |D^kdF_t|^2 \leq L_kt^{-k}
    \end{equation}
    on $\mfd\times(0,\widehat{T}]$, and
    \begin{equation}\label{EQN Long global HMHF existence, dist result}
        d_{\twiddle{g}(t)}(f(x),F_{t}(x)) \leq 2e^{2Kt}\sqrt{L_1t}
    \end{equation}
    for all $(x,t)\in\mfd\times[0,\widehat{T}]$.
\end{proposition}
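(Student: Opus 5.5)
The plan is to iterate Proposition \ref{PROP Global HMHF existence}, restarting it at successive positive times, and to glue the pieces into one solution on a maximal interval. The new feature compared with Proposition \ref{PROP Global HMHF existence} is that there is no global bound $\rho^{-2}$ on the curvature. Condition (ii) only gives $|\riem|\leq\alpha t^{-1}$. However, by (iii) and (iv), on any interval $[s,T]$ with $s\geq0$ there is some $\rho_s>0$ for which hypotheses (ii)--(iv) of Proposition \ref{PROP Global HMHF existence} hold for the time-shifted flows $g(s+\cdot)$, $\twiddle{g}(s+\cdot)$. Here I use (iii) to bound $|\riem(g)|$ and $|\nabla\riem(g)|$. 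If (iii) is only meant at a single time, I would propagate it to later times using Shi-type estimates together with the uniform bound $\alpha/t$ for $t\geq s>0$.

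\textbf{Step 1 (the set of good times).} Let $S$ be the set of $\tau\in(0,T]$ for which a smooth solution $F$ of \eqref{EQN HMHF} exists on $\mfd\times[0,\tau]$ with $F_0=f$ and $|dF_t|^2\leq\Lambda_0$ throughout. Proposition \ref{PROP Global HMHF existence}, applied with $\Lambda_0$ replaced by $\lambda_0$, shows that $S$ is nonempty, since $10\lambda_0<\Lambda_0$. Solutions with bounded $|dF|$ are unique, by the uniqueness results for the harmonic map heat flow in \cite{MCLeePaper} collected in the appendix. So solutions on different members of $S$ agree on their overlap and glue together, and we set $\widehat{T}=\sup S$.

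\textbf{Step 2 (closing up and extending).} On $(0,\widehat{T})$ the derivative estimate \eqref{EQN Long global HMHF existence, higher deriv result} holds. It comes from Proposition \ref{PROP Lee HMHF bound on derivs}, which needs only the bound $\alpha/t$ and $|dF|^2\leq\Lambda_0$, so the constants $L_k(n,\alpha,\Lambda_0,k)$ do not depend on $\rho$. With these bounds, $F_t$ converges smoothly as $t\nearrow\widehat{T}$, giving $\widehat{T}\in S$; this also uses the local uniform bounds proved via Theorem \ref{PROP Lee Tam max princ improved}, as in the proof of Proposition \ref{PROP Global HMHF existence}.

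Now suppose $\widehat{T}<T$ and $|dF_{\widehat{T}}|^2<\Lambda_0$. I would apply Proposition \ref{PROP Global HMHF existence} starting from $F_{\widehat{T}}$ at time $\widehat{T}$, with some $\Lambda'\geq|dF_{\widehat{T}}|^2$. By continuity of $|dF_t|^2$ in $t$, which holds uniformly thanks to the higher derivative bounds, the solution keeps $|dF|^2\leq\Lambda_0$ for a short time. That contradicts maximality. Hence either $\widehat{T}=T$, or $|dF|^2$ attains $\Lambda_0$ at $\widehat{T}$, which is exactly the sense in which $\widehat{T}$ is maximal.

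\textbf{Main obstacle.} The delicate point is the continuation argument. Proposition \ref{PROP Global HMHF existence} on its own only guarantees $|dF|^2\leq10\Lambda'$, which could exceed $\Lambda_0$. I would handle this by running its estimate on a short interval and then using the uniform-in-time local bounds from the maximum principle argument to get continuity of $\sup|dF_t|^2$ from the right. If the supremum is not attained, I would restrict to a slightly smaller threshold. I expect this to be the hardest step to make rigorous.

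\textbf{Step 3 (distance estimate).} The bound \eqref{EQN Long global HMHF existence, dist result} then follows exactly as in \eqref{EQN HMHF dist calc}. One integrates $|\del_sF_s|\leq|D\,dF_s|\leq\sqrt{L_1}s^{-1/2}$ along the curve $s\mapsto F_s(x)$, and uses $\ric(\twiddle{g})\geq-K$ to compare the metrics $\twiddle{g}(s)$ and $\twiddle{g}(t)$.
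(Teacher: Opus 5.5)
Your overall route is the paper's. You choose one $\rho$ from (iii)--(iv) and apply Proposition \ref{PROP Global HMHF existence} repeatedly; the first application, with $\lambda_0$, gives $|dF_t|^2\le10\lambda_0<\Lambda_0$ and hence $\widehat{T}>0$. Then Proposition \ref{PROP Lee HMHF bound on derivs} gives \eqref{EQN Long global HMHF existence, higher deriv result}, and repeating \eqref{EQN HMHF dist calc} gives \eqref{EQN Long global HMHF existence, dist result}.

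The step that fails as written is the gluing in Step 1. No uniqueness theorem for \eqref{EQN HMHF} is collected in the appendix. It contains only local existence, the derivative bounds, the evolution inequality and the localised maximum principles. The only uniqueness statement in the paper, Theorem \ref{PROP CLL flow existence}, concerns the Ricci--DeTurck flow. Uniqueness of harmonic map heat flows with bounded energy density between complete noncompact manifolds, with no lower bound on the target's injectivity radius, would need its own proof. Without it, $\sup S$ is a supremum over possibly different solutions, so you have no single $F$ on $[0,\widehat{T})$ for Step 2 to act on.

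You do not need uniqueness: build one solution by concatenation, as the paper does. Whenever the current solution, defined up to some $s<T$, still has $\sup_x|dF_s|^2\le\Lambda_0$, restart Proposition \ref{PROP Global HMHF existence} from $F_s$. This extends the solution by $\min\{T-s,T_0(n,\Lambda_0)\rho^2\}$, with $|dF|^2\le10\Lambda_0$ on the new piece. The concatenation is smooth, since all time derivatives at a junction are determined by spatial ones through \eqref{EQN HMHF}. Stop when $T$ is reached or the bound $\Lambda_0$ has failed; the uniform step length makes this finite. Then define $\widehat{T}$ as the exit time of this one solution. Since $t\mapsto\sup_x|dF_t(x)|^2$ is a supremum of continuous functions, it is lower semicontinuous. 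So the set where it is at most $\Lambda_0$ is closed, and the bound holds on all of $[0,\widehat{T}]$.

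This also dissolves your ``main obstacle''. Losing the factor $10$ is harmless, because the solution is allowed to exceed $\Lambda_0$ after $\widehat{T}$; maximality just says it does so at times arbitrarily close to $\widehat{T}$. Right-continuity of $\sup_x|dF_t|^2$ is needed only for the stronger fact that $\sup_x|dF_{\widehat{T}}|^2=\Lambda_0$ when $\widehat{T}<T$. That is not part of the statement, though it is implicitly used in the proof of Theorem \ref{PROP flowing from t=0}. Your argument, bounding $\partial_t dF$ uniformly near $\widehat{T}$ via the $L_k$ estimates on the extended solution, is the right way to supply it.
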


\begin{proof}
    Take $\ell$ such that $10^{\ell-1}\lambda_0\leq\Lambda_0\leq10^{\ell}\lambda_0$. Since $\sup_{\mfd}|\riem(g(t))| + \sup_{\mfd}|\nabla\riem(g(t))| < \infty$ and $\sup_{\twiddle{\mfd}}|\riem(\twiddle{g}(t))| < \infty$, we can find $\rho>0$ such that the hypotheses of Proposition \ref{PROP Global HMHF existence} apply.
    
    Iteratively apply the proposition to obtain a solution for $t\in[0,\min\{T,T_\ell\rho^2\}]$, with $|dF_t|^2\leq10^\ell\lambda_0$ for all $t$, for some $T_\ell>0$. Then either there exists a time $t\geq T_0\rho^2>0$ when $|dF_t|^2>\Lambda_0$, or $|dF_t|^2\leq\Lambda_0$ for all $t\in[0,\min\{T,T_\ell\rho^2\}]$. In the latter case, continue to iteratively apply the proposition, making use of the initial bound $|dF_t|^2\leq\Lambda_0$, until either we have a solution with the required derivative bound for all $t\in[0,T]$, or until there is a time when $|dF_t|^2>\Lambda_0$. Now (in any case), take $\widehat{T}\in(0,T]$ to be the largest time such that $|dF_t|^2\leq\Lambda_0$ for $t\in[0,\widehat{T}]$.
    
    Finally, note that \eqref{EQN Long global HMHF existence, higher deriv result} follows by applying Proposition \ref{PROP Lee HMHF bound on derivs}, and \eqref{EQN Long global HMHF existence, dist result} can be obtained by repeating the calculation \eqref{EQN HMHF dist calc}.
\end{proof}

\section{Ricci-DeTurck flow}\label{SEC R-DT flow}

In \S\ref{SEC HMHF}, we used the parabolicity of \eqref{EQN HMHF} to obtain the existence of, and estimates for, a harmonic map heat flow $F$. In this section, we change perspectives to instead consider the metric $(F_t^{-1})^*g(t)$, which is a Ricci-DeTurck flow with respect to $\twiddle{g}(t)$. The parabolicity of \eqref{EQN Ricci-DeTurck} allows us to obtain further estimates for $(F_t^{-1})^*g(t)$ and, consequently, $F$.

First, we must ensure that the harmonic map heat flow obtained in Proposition \ref{PROP Long global HMHF existence} continues to be a diffeomorphism across the whole manifold for some small positive time. For this, we require the following evolution equation.

\begin{lemma}[Lee, {\cite[Lemma 2.5, first part]{MCLeePaper}}]\label{PROP Lee RDT flow evo eqn}
    There exists $10^{-2}>\overline{\eps}_0(n)>0$ such that the following holds: Suppose $(\twiddle{\mfd}^n,\twiddle{g}(t))_{t\in[0,T]}$ is a smooth solution to the Ricci flow, and $(\widehat{g}(t))_{t\in[0,T]}$ is a smooth solution to \eqref{EQN Ricci-DeTurck} with respect to $\twiddle{g}(t)$, such that
    \[
    (1-\overline{\eps}_0)\twiddle{g}(t) \leq \widehat{g}(t) \leq (1+\overline{\eps}_0)\twiddle{g}(t)
    \]
    for all $t$. Then $h:=\widehat{g}-\twiddle{g}$ satisfies
    \[
    \left(\del_t-\widehat{g}^{ij}\twiddle{\nabla}_i\twiddle{\nabla}_j\right) |h|^2 \leq -|\twiddle{\nabla}h|^2 + C_n|\riem(\twiddle{g})|\,|h|^2.
    \]
\end{lemma}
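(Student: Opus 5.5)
The plan is a direct computation of the evolution of $|h|^2$, where $h=\widehat{g}-\twiddle{g}$ and norms are taken with respect to $\twiddle{g}(t)$. The first step is to rewrite the Ricci--DeTurck equation \eqref{EQN Ricci-DeTurck} in the standard strictly parabolic form relative to the background $\twiddle{g}$. In coordinates this reads
\[
\del_t\widehat{g}_{ij}=\widehat{g}^{ab}\twiddle{\nabla}_a\twiddle{\nabla}_b\widehat{g}_{ij}-\widehat{g}^{kl}\widehat{g}_{ip}\twiddle{g}^{pq}\twiddle{R}_{jkql}-\widehat{g}^{kl}\widehat{g}_{jp}\twiddle{g}^{pq}\twiddle{R}_{ikql}+\widehat{g}^{-1}*\widehat{g}^{-1}*\twiddle{\nabla}\widehat{g}*\twiddle{\nabla}\widehat{g}.
\]
This is the classical Shi/DeTurck computation, obtained by expanding $\ric(\widehat{g})$ and $\lie_V\widehat{g}$ in terms of $\twiddle{\nabla}$. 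Since $\twiddle{g}$ solves Ricci flow, we have $\del_t\twiddle{g}=-2\ric(\twiddle{g})$, and $\twiddle{\nabla}\twiddle{g}=0$. Subtracting gives an equation for $h$ of the form
\[
\left(\del_t-\widehat{g}^{ab}\twiddle{\nabla}_a\twiddle{\nabla}_b\right)h=\riem(\twiddle{g})*h\,(1+O(h))+\widehat{g}^{-1}*\widehat{g}^{-1}*\twiddle{\nabla}h*\twiddle{\nabla}h.
\]
The key point is that the zeroth-order curvature terms cancel exactly when $h=0$, because $\twiddle{g}$ is itself a solution. Consequently every remaining curvature term is linear in $h$, with coefficients that are bounded, using $\widehat{g}^{-1}$ and $\widehat{g}$ being uniformly comparable to $\twiddle{g}$.

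Next I would compute $\del_t|h|^2_{\twiddle{g}}$. The time derivative of the background metric contributes $\ric(\twiddle{g})*h*h$, which is bounded by $C_n|\riem(\twiddle{g})||h|^2$. The second-order part gives
\[
\widehat{g}^{ab}\twiddle{\nabla}_a\twiddle{\nabla}_b|h|^2=2\langle h,\widehat{g}^{ab}\twiddle{\nabla}_a\twiddle{\nabla}_bh\rangle+2\widehat{g}^{ab}\langle\twiddle{\nabla}_ah,\twiddle{\nabla}_bh\rangle.
\]
Here $2\widehat{g}^{ab}\langle\twiddle{\nabla}_ah,\twiddle{\nabla}_bh\rangle\geq 2(1+\overline{\eps}_0)^{-1}|\twiddle{\nabla}h|^2\geq\frac{3}{2}|\twiddle{\nabla}h|^2$. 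The curvature terms contribute $C_n|\riem(\twiddle{g})||h|^2$, and the quadratic gradient term contributes $C_n|h||\twiddle{\nabla}h|^2$. Putting these together yields
\[
\left(\del_t-\widehat{g}^{ij}\twiddle{\nabla}_i\twiddle{\nabla}_j\right)|h|^2\leq-\tfrac{3}{2}|\twiddle{\nabla}h|^2+C_n|h|\,|\twiddle{\nabla}h|^2+C_n|\riem(\twiddle{g})||h|^2.
\]

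The last step is to choose $\overline{\eps}_0(n)$. The hypothesis gives $|h|\leq\sqrt{n}\,\overline{\eps}_0$. Taking $\overline{\eps}_0$ small enough that $C_n\sqrt{n}\,\overline{\eps}_0\leq\frac12$ absorbs the middle term into the good gradient term, which gives the claimed inequality with coefficient $-1$.

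The main obstacle is the bookkeeping in the first step. One must verify that the quadratic gradient terms really involve only $\twiddle{\nabla}h$, which holds because $\twiddle{\nabla}\widehat{g}=\twiddle{\nabla}h$. One must also verify that the curvature terms are genuinely $O(|\riem(\twiddle{g})||h|)$ with no $O(1)$ remainder. For the latter, I would write $\widehat{g}^{kl}=\twiddle{g}^{kl}+O(h)$ and $\widehat{g}_{ip}=\twiddle{g}_{ip}+h_{ip}$, and check directly that the $h$-independent part reproduces $-2\ric(\twiddle{g})$.
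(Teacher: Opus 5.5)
Your proposal is correct and is essentially the argument behind this lemma: the paper gives no proof of its own but imports it from Lee \cite[Lemma 2.5]{MCLeePaper}, which rests on exactly this Shi-type computation, namely writing \eqref{EQN Ricci-DeTurck} in strictly parabolic form relative to $\twiddle{g}$, noting that the zeroth-order curvature terms cancel against $\del_t\twiddle{g}=-2\ric(\twiddle{g})$ so that only $\riem(\twiddle{g})*h$ terms survive, and absorbing the $|h|\,|\twiddle{\nabla}h|^2$ term using $|h|\leq\sqrt{n}\,\overline{\eps}_0$. One bookkeeping point to state explicitly: fix the constant in front of $|h|\,|\twiddle{\nabla}h|^2$ using the a priori bound $\overline{\eps}_0<10^{-2}$, so that it depends only on $n$, before shrinking $\overline{\eps}_0$; this avoids any circularity.
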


This allows us to prove the following.

\begin{proposition}\label{PROP HMHF short time diffeo}
    Suppose $(\mfd^n, g(t))$, $(\twiddle{\mfd}^n, \twiddle{g}(t))$, $t \in [0, T]$ are two smooth complete solutions to the Ricci flow, such that for all $t\in[0,T]$ and some $A,\alpha>0$,
    \[
    |\riem(g(t))|\leq\frac{\alpha}{t}, \hspace{5ex } |\riem(\twiddle{g}(t))|\leq\frac{\alpha}{t}, \hspace{5ex } |\riem(\twiddle{g}(t))|\leq A.
    \]
    Let $F:\mfd\times[0,T]\rightarrow\twiddle{\mfd}$ be a solution to \eqref{EQN HMHF}, and write $f:=F_0$. Suppose we have $\Lambda_0,\Lambda_1>0$ such that $|dF_t|^2\leq\Lambda_0$ for all $t\in[0,T]$, and $d_{\twiddle{g}(t)}(f(x),F_t(x))\leq\Lambda_1\sqrt{t}$ for all $(x,t)\in\mfd\times[0,T]$.

    Then there is $\overline{\eps}_1=\overline{\eps}_1(n)>0$, $c=c(n)>0$, such that the following is true. Suppose $f$ is a diffeomorphism which satisfies
    \[
    \bigg(1-\frac{1}{8\sqrt{n}}\overline{\eps}_1\bigg)g(0) \leq f^*\twiddle{g}(0) \leq \bigg(1+\frac{1}{8\sqrt{n}}\overline{\eps}_1\bigg)g(0).
    \]
    Then for all $t\in[0,T]\cap[0,cA^{-1}]$, we have that $F_t$ is a diffeomorphism and
    \[
    (1-\overline{\eps}_1)g(t) \leq F_t^*\twiddle{g}(t) \leq (1+\overline{\eps}_1)g(t).
    \]
\end{proposition}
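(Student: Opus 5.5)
The plan is a continuity argument in time, using the Ricci--DeTurck evolution of Lemma \ref{PROP Lee RDT flow evo eqn} together with the bounded-curvature hypothesis $|\riem(\twiddle{g})|\leq A$ to control $h:=\widehat{g}-\twiddle{g}$, where $\widehat{g}(t):=(F_t^{-1})^*g(t)$. Throughout I take $\overline{\eps}_1\leq\overline{\eps}_0$ and small.

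\textbf{Step 1: initial closeness and the maximal interval.} The pinching assumption on $f$ gives $|h(0)|_{\twiddle{g}(0)}\leq\frac{1}{8}\overline{\eps}_1$, since each eigenvalue is within $\frac{1}{8\sqrt{n}}\overline{\eps}_1$ and there are $n$ of them. Let $t_*\leq\min\{T,cA^{-1}\}$ be the supremum of times $s$ such that, for all $t\in[0,s]$, $F_t$ is a diffeomorphism and $(1-\overline{\eps}_1)g(t)\leq F_t^*\twiddle{g}(t)\leq(1+\overline{\eps}_1)g(t)$.

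\textbf{Step 2: diffeomorphism property is open.} If the two-sided bound holds at time $t$, then $dF_t$ is everywhere an isomorphism, so $F_t$ is a local diffeomorphism. The uniform bounds $|dF_t|^2\leq\Lambda_0$ and the continuity of $F$ in $t$ give smooth local convergence of $F_s\to F_t$. Properness and injectivity then need to persist for $s$ near $t$. For injectivity, the distance control $d_{\twiddle{g}(t)}(f(x),F_t(x))\leq\Lambda_1\sqrt{t}$ shows each $F_t$ stays boundedly close to the diffeomorphism $f$, hence is proper. A proper local diffeomorphism homotopic to $f$ through such maps is a covering of degree one, hence a diffeomorphism. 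So the diffeomorphism property is open along the family and closed while the bilipschitz bound holds.

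\textbf{Step 3: improving the metric bound (the main obstacle).} On $[0,t_*)$, $\widehat{g}$ solves \eqref{EQN Ricci-DeTurck} with respect to $\twiddle{g}$ and is $\overline{\eps}_0$-close to it, so Lemma \ref{PROP Lee RDT flow evo eqn} gives
\[
\bigl(\del_t-\widehat{g}^{ij}\twiddle{\nabla}_i\twiddle{\nabla}_j\bigr)|h|^2\leq C_nA|h|^2.
\]
I would then apply a maximum principle on the complete manifold $(\twiddle{\mfd},\twiddle{g}(t))$; this needs some a priori growth control of $|h|^2$, which holds since $|h|$ is bounded by $\overline{\eps}_1$. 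A Karp--Li / Ecker--Huisken type argument, or the localized maximum principle Theorem \ref{PROP Lee Tam max princ improved} with bounded data, should yield
\[
|h|^2(t)\leq e^{C_nAt}\sup|h(0)|^2\leq e^{C_nc}\tfrac{1}{64}\overline{\eps}_1^2.
\]
Choosing $c=c(n)$ so that $e^{C_nc}\leq 4$ gives $|h|\leq\frac14\overline{\eps}_1$. This strictly improves the bilipschitz bound, because a norm bound on $h$ controls eigenvalues up to a factor at most $\sqrt{n}$ (and with the $\frac18$ margin this is absorbed after adjusting constants). The main technical point is justifying the maximum principle on the noncompact manifold, where $\widehat{g}$ is transported through the changing diffeomorphisms $F_t$; the bounded, smooth quantities on $[0,t_*)$ should make this routine.

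\textbf{Step 4: conclusion.} Continuity in $t$ then shows the strict bound persists past $t_*$ unless $t_*=\min\{T,cA^{-1}\}$, which gives the claim.
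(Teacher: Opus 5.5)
Your Steps 2 and 3 are plausible global substitutes for parts of the paper's argument: the properness-plus-covering argument for injectivity, and a Karp--Li type maximum principle using $|\riem(\twiddle{g})|\leq A$. The gap is that the continuity argument itself cannot be started, and this is the difficulty the paper's proof is built around.

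To extend the good set past $t_*$ in Step 4, you need a modulus of continuity in $t$ for $F_t^*\twiddle{g}(t)$ relative to $g(t)$ that is uniform over the noncompact manifold $\mfd$. Smoothness of $F$ gives this only on compact sets. For $t_*>0$ you can get uniformity from Proposition \ref{PROP Lee HMHF bound on derivs}: the bounds $|D^mdF_t|^2\leq L_3t^{-m}$ and $|\riem|\leq\alpha/t$ give $|\del_t(F_t^*\twiddle{g}(t))(v,v)|\lesssim t^{-1}|v|_{g(t)}^2$ uniformly. At $t_*=0$, however, this bound is not integrable. The hypotheses also give no control on $D\,df$ for the initial map $f$, nor on the curvature of $g(0)$. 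So nothing excludes the good set being $\{0\}$. In that case $\widehat{g}$ is not known to be defined, let alone $\overline{\eps}_0$-close to $\twiddle{g}$, on any interval $[0,s)$ with $s>0$, and the global maximum principle of Step 3 has nothing to act on. Likewise, your covering argument in Step 2 presupposes that every $F_s$, $s\leq t$, is a local diffeomorphism on all of $\mfd$, which is again what you do not know at the start.

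The missing idea is a localization that is insensitive to how small the starting time is. The paper proceeds as follows.
\begin{itemize}
\item Fix $p\in\twiddle{\mfd}$ and $r_0>0$, and use continuity on the relatively compact set $f^{-1}(B_{\twiddle{g}(0)}(p,r_0))$ to get some $t_0=t_0(r_0)>0$.
\item Bootstrap along the geometric sequence $t_{k+1}=(1+\mu)t_k$. On each step the $t^{-1}$ bound above costs only $\widehat{L}\mu\leq\overline{\eps}_1$, so the pinching degrades at most to $2\overline{\eps}_1$.
\item Injectivity on a slightly smaller set comes from Lee's Claim 3.3. An image-containment argument using $d_{\twiddle{g}(t)}(f(x),F_t(x))\leq\Lambda_1\sqrt{t}$ shows $\widehat{g}$ is defined on $B_{\twiddle{g}(0)}(p,r_k-2\Lambda\sqrt{t_k})$.
\item The localized twisted maximum principle, Theorem \ref{PROP Lee Tam max princ improved twisted}, applied on balls of radius $\Lambda\sqrt{t_k}$, restores the pinching to $\overline{\eps}_1$. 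The cost is shrinking the radius by $4\Lambda\sqrt{t_k}$.
\item The total radius loss is at most $16\Lambda\mu^{-1}\sqrt{\tau_0}$, independent of $t_0$ and $r_0$, so one can let $r_0\to\infty$.
\end{itemize}
Your choice of $c$ via $e^{C_nc}\leq 4$ plays the same role as the paper's $\tau_0\leq A^{-1}C_n^{-1}\log 2$. However, without a scale-invariant local bootstrap of this kind, your global argument does not prove the proposition.
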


\begin{proof}
    Before we begin, note that by Proposition \ref{PROP Lee HMHF bound on derivs}, there is $L_3=L_3(n,\alpha,\Lambda_0)>0$ such that for each $0\leq m\leq3$, we have
    \[
    |D^mdF_t|^2\leq L_3t^{-m}
    \]
    for all $t\in[0,T]$.

    To establish the proposition, we follow a large proportion of the induction argument in \cite[\S3]{MCLeePaper}, with a few small modifications. Since the argument involves defining many constants which must not depend on each other in a circular way, we repeat some of the details for the reader's convenience. We begin by defining those constants.
    \begin{enumerate}[label=(\roman*)]
        \item $\overline{\eps}_1:=\frac{1}{3}\overline{\eps}_0(n)$, where $\overline{\eps}_0$ is from Lemma \ref{PROP Lee RDT flow evo eqn}. This is the analogue of $\eps_3$ in \cite[\S3]{MCLeePaper}.
        \item $\tau_0:=\min\left\{T,A^{-1}C_n^{-1}\log2\right\}$, where $C_n$ is the constant from Lemma \ref{PROP Lee RDT flow evo eqn},
        \item $\widehat{L}:= 2\sqrt{\Lambda_0L_3} + 2\alpha(n-1)(\Lambda_0+1)$,
        \item $\gamma:=\gamma(n)$ from Lemma \ref{PROP distance distortion},
        \item $\twiddle{\alpha}:=\max\{\alpha,2n,C_nAT\}$, where again $C_n$ is the constant from Lemma \ref{PROP Lee RDT flow evo eqn},
        \item $\widehat{T}_1:=\widehat{T}(n,2\overline{\eps}_1,\twiddle{\alpha},\twiddle{\alpha}+1,0)$, where $\widehat{T}$ is from Theorem \ref{PROP Lee Tam max princ improved twisted},
        \item $\mu:=\min\{\overline{\eps}_1\widehat{L}^{-1},1\}$,
        \item $\Lambda:=\max\left\{\sqrt{\Lambda_0}\left(\mu\sqrt{\frac{L_3}{1-3\overline{\eps}_1}(1+\mu)^\alpha}+\gamma\sqrt{\alpha}\right),\ e^{(n-1)A\tau_0}\Lambda_1\sqrt{1+\mu},\ \sqrt{\frac{1+\mu}{\widehat{T}_1}},\ 4\sqrt{1+\mu}\,\overline{\eps}_1^{-1}\right\}$.
    \end{enumerate}
    
    Now let $p\in\twiddle{\mfd}$, and take any $r_0>0$. Then by continuity, there exists some $t_0\in(0,1)$, depending on $r_0$, such that 
    \[(1-\overline{\eps}_1)g(t) \leq F_t^*\twiddle{g}(t) \leq (1+\overline{\eps}_1)g(t)
    \]
    in the preimage $f^{-1}\left(B_{\twiddle{g}(0)}(p,r_0)\right)$ for all $0<t<t_0$. Then for each $k\in\nats$, we define
    \[
    t_k:=(1+\mu)t_{k-1}=(1+\mu)^kt_0,
    \]
    \[
    r_k:=r_{k-1}-4\Lambda\sqrt{t_{k-1}}=r_0-4\Lambda\cdot\frac{\sqrt{t_k}-\sqrt{t_0}}{\sqrt{1+\mu}-1}.
    \]
    We will prove the following statement by induction.
    \begin{quote}
        For each $k$, provided $r_k>0$, we have that
        \[
        (1-\overline{\eps}_1)g(t) \leq F_t^*\twiddle{g}(t) \leq (1+\overline{\eps}_1)g(t)
        \]
        on $f^{-1}\left(B_{\twiddle{g}(0)}(p,r_k)\right)$ for all $t\in[0,t_k]\cap[0,\tau_0]$. Further, $F_t$ is a diffeomorphism onto its image when restricted to $f^{-1}\left(B_{\twiddle{g}(0)}(p,r_k)\right)$ for all $t\in[0,t_k]\cap[0,\tau_0]$.
    \end{quote}

    Note that the base case $k=0$ follows from the choice of $t_0$. To begin the inductive step, suppose the statement holds for some $k$, and assume that $r_{k+1}>0,t_k\leq\tau_0$. We establish the inductive step through three claims.

    First, we obtain some control on $F_t^*\twiddle{g}$ in terms of $g$. This is the first step towards establishing that $F_t$ is a diffeomorphism onto its image, which will allow us to consider $(F_t^{-1})^*g(t)$. It will also allow us to use the evolution equation in Lemma \ref{PROP Lee RDT flow evo eqn}.

    \begin{claim}[see {\cite[Claim 3.2]{MCLeePaper}}]\label{PROP HMHF short time diffeo, local diffeo claim}
        We have that
        \[
        (1-2\overline{\eps}_1)g(t) \leq F_t^*\twiddle{g}(t) \leq (1+2\overline{\eps}_1)g(t)
        \]
        on $f^{-1}\left(B_{\twiddle{g}(0)}(p,r_k)\right)$ for all $t\in[0,t_{k+1}]\cap[0,\tau_0]$.
    \end{claim}
    \begin{proof}[Proof of claim]
        This is identical to the proof of \cite[Claim 3.2]{MCLeePaper}, but with changes to some of the constants, so we give the details for convenience.
    
        From the inductive assumption, we have that
        \[
        (1-\overline{\eps}_1)g(t_k) \leq F_{t_k}^*\twiddle{g}(t_k) \leq (1+\overline{\eps}_1)g(t_k)
        \]
        in $f^{-1}\left(B_{\twiddle{g}(0)}(p,r_k)\right)$.
        
        For $t\in[t_k,t_{k+1}]\cap[0,\tau_0]$, let $\ph(t):=1-\overline{\eps}_1-\widehat{L}t_k^{-1}(t-t_k)$, and note that $1-2\eps_1\leq\ph\leq1-\eps_1$ for $t\in[t_k,t_{k+1}]\cap[0,\tau_0]$ by our choice of $\mu$.
        Let $\twiddle{h}(t):=F_t^*\twiddle{g}(t)-\ph(t)g(t)$. Then for any fixed vector $v$, we have $\twiddle{h}(t_k)(v,v)\geq0$ by the inductive assumption. For $t\in[t_k,t_{k+1}]\cap[0,\tau_0]$ we have
        \begin{align*}
            \del_t\left(\twiddle{h}(t)(v,v)\right)
            &= \del_t \Big( \twiddle{g}(dF_t(v),dF_t(v)) - \ph\,g(v,v) \Big)\\
            &= 2\twiddle{g}(dF_t(v),(\del_tdF_t)(v)) - 2F_t^*\twiddle{\ric}(v,v) + 2\ph\ric(v,v) - \ph'g(v,v)  \\
            &\geq |v|_g^2 \left( -\left(2\sqrt{\Lambda_0L_3} +2\alpha(n-1)(\Lambda_0+1) \right)t^{-1} -\ph' \right) \\
            &\geq 0.
        \end{align*}
        Hence $\twiddle{h}(t)(v,v)\geq0$, and therefore $F_t^*\twiddle{g}(t)\geq(1-2\overline{\eps}_1)g(t)$, for all $t\in[t_k,t_{k+1}]\cap[0,\tau_0]$. The upper bound is established in the same way.
    \end{proof}

    This allows us to establish that on a smaller ball, $F_t$ is a diffeomorphism onto its image.

    \begin{claim}[see {\cite[Claim 3.3]{MCLeePaper}}]\label{PROP HMHF short time diffeo, diffeo claim}
        $F_t$ is a diffeomorphism onto its image when restricted to $f^{-1}\left(B_{\twiddle{g}(0)}(p,r_k-\Lambda\sqrt{t_k})\right)$ for all $t\in[0,t_{k+1}]\cap[0,\tau_0]$.
    \end{claim}
    \begin{proof}[Proof of claim]
        Repeating the proof of \cite[Claim 3.3]{MCLeePaper}, with the same constants, gives that $F_t$ is a diffeomorphism onto its image when restricted to
        \[
        U:=\left\{ x\in\mfd : B_{g(0)}(x,\twiddle{\Lambda}\sqrt{t_k}) \setin f^{-1}(B_{\twiddle{g}(0)}(p,r_k)) \right\}
        \]
        for all $t\in[0,t_{k+1}]\cap[0,\tau_0]$, where
        \[
        \twiddle{\Lambda}:=\mu\sqrt{\frac{L_3}{1-3\overline{\eps}_1}(1+\mu)^\alpha}+\gamma\sqrt{\alpha}.
        \]

        Since $|df|^2\leq\Lambda_0$, we have that $B_{g(0)}(x,\twiddle{\Lambda}\sqrt{t_k}) \setin f^{-1}(B_{\twiddle{g}(0)}(f(x),\twiddle{\Lambda}\sqrt{\Lambda_0t_k}))$ for all $x\in\mfd$. Therefore
        \[
        f^{-1}\left(B_{\twiddle{g}(0)}\left(p,r_k-\twiddle{\Lambda}\sqrt{\Lambda_0t_k}\right)\right) \setin U.
        \]
        Our choice of $\Lambda\geq\twiddle{\Lambda}\sqrt{\Lambda_0}$ establishes Claim \ref{PROP HMHF short time diffeo, diffeo claim}.
    \end{proof}

    Hence we may now consider $(F_t^{-1})^*g(t)$, and make use of the fact that it solves the Ricci-DeTurck flow equation with respect to $\twiddle{g}(t)$.

    \begin{claim}[see {\cite[Claim 3.4]{MCLeePaper}}]\label{PROP HMHF short time diffeo, better local diffeo claim}
        We have that
        \[
        (1-\overline{\eps}_1)g(t) \leq F_t^*\twiddle{g}(t) \leq (1+\overline{\eps}_1)g(t)
        \]
        on $f^{-1}\left(B_{\twiddle{g}(0)}(p,r_k-4\Lambda\sqrt{t_k})\right)$ for all $t\in[0,t_{k+1}]\cap[0,\tau_0]$.
    \end{claim}
    \begin{proof}
        For any $x\in\mfd,t\in[0,t_{k+1}]\cap[0,\tau_0]$, we have
        \begin{align}
            d_{\twiddle{g}(0)}(f(x),F_t(x))
            &\leq e^{(n-1)A\tau_0} \, d_{\twiddle{g}(t)}(f(x),F_t(x)) \nonumber\\
            &\leq e^{(n-1)A\tau_0} \Lambda_1\sqrt{t} \nonumber\\
            &\leq e^{(n-1)A\tau_0} \Lambda_1\sqrt{(1+\mu)t_k} \nonumber\\
            &\leq \half\Lambda\sqrt{t_k} \label{EQN HMHF short time diffeo, max dist moved}
        \end{align}
        by the Ricci flow equation and our choice of $\Lambda$.

        We claim that
        \begin{equation}\label{EQN HMHF short time diffeo, image containment}
            B_{\twiddle{g}(0)}(p,r_k-2\Lambda\sqrt{t_k}) \Subset F_t\left(f^{-1}\left(B_{\twiddle{g}(0)}(p,r_k-\Lambda\sqrt{t_k})\right)\right)
        \end{equation}
        for all $t\in[0,t_{k+1}]\cap[0,\tau_0]$. Suppose that $x\in\del\left(F_t\left(f^{-1}\left(B_{\twiddle{g}(0)}(p,r_k-\Lambda\sqrt{t_k})\right)\right)\right)$. Then $x=F_t(y)$ for some $y\in\del\left(f^{-1}\left(B_{\twiddle{g}(0)}(p,r_k-\Lambda\sqrt{t_k})\right)\right)$ by claim \ref{PROP HMHF short time diffeo, local diffeo claim}, so $d_{\twiddle{g}(0)}(p,f(y))=r_k-\Lambda\sqrt{t_k}$ because $f$ is a global diffeomorphism. Then $d_{\twiddle{g}(0)}(p,x)\geq r_k-2\Lambda\sqrt{t_k}$ by \eqref{EQN HMHF short time diffeo, max dist moved}. Hence
        \[
        \del\left(F_t\left(f^{-1}\left(B_{\twiddle{g}(0)}(p,r_k-\Lambda\sqrt{t_k})\right)\right)\right) \cap B_{\twiddle{g}(0)}(p,r_k-2\Lambda\sqrt{t_k}) = \emptyset
        \]
        for $t\in[0,t_{k+1}]\cap[0,\tau_0]$. Since the containment \eqref{EQN HMHF short time diffeo, image containment} holds when $t=0$, by continuity it must therefore hold for all $t\in[0,t_{k+1}]\cap[0,\tau_0]$.

        Therefore by Claim \ref{PROP HMHF short time diffeo, diffeo claim} and \eqref{EQN HMHF short time diffeo, image containment}, we may consider $\widehat{g}(t):=(F_t^{-1})^*g(t)$ on $B_{\twiddle{g}(0)}(p,r_k-2\Lambda\sqrt{t_k})$, for $t\in[0,t_{k+1}]\cap[0,\tau_0]$. This satisfies \eqref{EQN Ricci-DeTurck} with respect to $\twiddle{g}(t)$, and applying Claim \ref{PROP HMHF short time diffeo, local diffeo claim}, we further have that
        \[
        (1-3\overline{\eps}_1)\twiddle{g}(t) \leq (1+2\overline{\eps}_1)^{-1}\twiddle{g}(t) \leq \widehat{g}(t) \leq (1-2\overline{\eps}_1)^{-1}\twiddle{g}(t) \leq (1+3\overline{\eps}_1)\twiddle{g}(t),
        \]
        because $\overline{\eps}_1$ is sufficiently small.
        
        Let $h:=\widehat{g}-\twiddle{g}$. Then we may apply Claim \ref{PROP HMHF short time diffeo, local diffeo claim} and Lemma \ref{PROP Lee RDT flow evo eqn} to see that
        \[
        \left(\del_t-\widehat{g}^{ij}\twiddle{\nabla}_i\twiddle{\nabla}_j\right) |h|^2 \leq C_nA\,|h|^2
        \]
        on $B_{\twiddle{g}(0)}(p,r_k-2\Lambda\sqrt{t_k})$, for $t\in[0,t_{k+1}]\cap[0,\tau_0]$. Recall that when $t=0$ we have
        \[
        \bigg(1-\frac{1}{4\sqrt{n}}\overline{\eps}_1\bigg)\twiddle{g}(0)
        \leq \widehat{g}(0)
        \leq \bigg(1+\frac{1}{4\sqrt{n}}\overline{\eps}_1\bigg)\twiddle{g}(0)
        \]
        because $\overline{\eps}_1$ is sufficiently small. Therefore $|h|^2\leq\overline{\eps}_1^2$/16 when $t=0$.

        Next, we apply a local maximum principle to control how quickly $|h|^2$ can grow, thus obtaining Claim \ref{PROP HMHF short time diffeo, better local diffeo claim}. We already have $|h|^2\leq n(1+3\overline{\eps}_1)^2\leq2n$ for $t\in[0,t_{k+1}]\cap[0,\tau_0]$, so we may apply Theorem \ref{PROP Lee Tam max princ improved twisted}. We apply the theorem with $r=\Lambda\sqrt{t_k}$ and $\ell=\twiddle{\alpha}+1$ to $\ph:=|h|^2-\frac{1}{16}\overline{\eps}_1^2\exp(C_nAt)$, to see that
        \[
        |h|^2-\frac{1}{16}\overline{\eps}_1^2\exp(C_nAt) \leq \Lambda^{-2\ell}t_k^{-\ell}t^\ell
        \]
        on $B_{\twiddle{g}(0)}(p,r_k-3\Lambda\sqrt{t_k})$, for $t\in[0,t_{k+1}]\cap[0,\widehat{T}_1\Lambda^2t_k]\cap[0,\tau_0]=[0,t_{k+1}]\cap[0,\tau_0]$ by our choice of $\Lambda$. Hence, since $\ell\geq1$ and by our choices of $\tau_0\leq A^{-1}C_n^{-1}\log2$, $\Lambda\geq4\sqrt{1+\mu}\,\overline{\eps}_1^{-1}$, we have 
        \[
        |h|^2 \leq \frac{\overline{\eps}_1^2}{4}
        \]
        on the same set. Therefore
        \[
        (1-\overline{\eps}_1)\widehat{g}(t) \leq (1+\overline{\eps}_1/2)^{-1}\widehat{g}(t) \leq \twiddle{g}(t) \leq (1-\overline{\eps}_1/2)^{-1}\widehat{g}(t) \leq (1+\overline{\eps}_1)\widehat{g}(t)
        \]
        on $B_{\twiddle{g}(0)}(p,r_k-3\Lambda\sqrt{t_k})$, for $t\in[0,t_{k+1}]\cap[0,\tau_0]$ because $\overline{\eps}_1$ is sufficiently small.
        
        Finally, using the containment of preimages
        \[
        F_t^{-1}\left(B_{\twiddle{g}(0)}(p,r_k-3\Lambda\sqrt{t_k})\right)
        \supseteq f^{-1}\left(B_{\twiddle{g}(0)}(p,r_k-4\Lambda\sqrt{t_k})\right)
        \]
        proves Claim \ref{PROP HMHF short time diffeo, better local diffeo claim}.
    \end{proof}

    This concludes the induction argument. Finally, we fix the finite $k\in\nats$ such that $t_{k-1}\leq\tau_0\leq t_k$. Then $t_k\leq(1+\mu)\tau_0$, so
    \[
    r_k\geq r_0 - 4\Lambda\cdot\frac{\sqrt{1+\mu}\sqrt{\tau_0}}{\sqrt{1+\mu}-1}\geq r_0-\frac{16\Lambda\sqrt{\tau_0}}{\mu}.
    \]
    Therefore
    \[
    (1-\overline{\eps}_1)g(t) \leq F_t^*\twiddle{g}(t) \leq (1+\overline{\eps}_1)g(t)
    \]
    on $f^{-1}\left(B_{\twiddle{g}(0)}(p,r_0-16\Lambda\mu^{-1}\sqrt{\tau_0})\right)$ for all $t\in[0,\tau_0]$, and  $F_t$ is a diffeomorphism onto its image when restricted to $f^{-1}\left(B_{\twiddle{g}(0)}(p,r_0-16\Lambda\mu^{-1}\sqrt{\tau_0})\right)$ for all $t\in[0,\tau_0]$. Since the quantity $16\Lambda\mu^{-1}\sqrt{\tau_0}$ is independent of $r_0$ and $t_0$, we may send $r_0\rightarrow\infty$ to conclude the proof of Proposition \ref{PROP HMHF short time diffeo}.
\end{proof}

We now know that, at least for a short time, the solutions we have obtained to \eqref{EQN HMHF} remain diffeomorphisms which are close to isometries. This means we can now begin to consider the Ricci-DeTurck flow $(F_t^{-1})^*g(t)$. We will use the following result concerning stability of the Ricci-DeTurck flow.

\begin{theorem}[Chan--Lai--Lee, {\cite[Theorem 1.3]{CLL25}}]\label{PROP CLL flow existence}
    Let $\twiddle{\mfd}$ be a complete manifold. Suppose $\twiddle{g}(t)$, $t\in[0,T]$ is a smooth Ricci flow on $\twiddle{\mfd}$ such that \eqref{EQN assumptions inj} holds for some $\alpha>0$. Then there exists $\overline{\eps}_2(n,\alpha), \Lambda(n,\alpha)>0$ so that the following holds: if $\widehat{g}_0$ is a smooth metric so that
    \begin{equation}\label{EQN RDT stability closeness condition}
        (1-\eps)\twiddle{g}(0)\leq\widehat{g}_0\leq(1+\eps)\twiddle{g}(0) \hspace{3ex} \text{on }\twiddle{\mfd}
    \end{equation}
    for some $0\leq\eps<\overline{\eps}_2$, then there is a solution $\widehat{g}$ to \eqref{EQN Ricci-DeTurck} on $\twiddle{\mfd}\times[0,\min\{T,1\}]$ such that $\widehat{g}(0)=\widehat{g}_0$ and
    \[
    (1-\Lambda\eps)\twiddle{g}(t)\leq\widehat{g}(t)\leq(1+\Lambda\eps)\twiddle{g}(t) \hspace{3ex} \text{on }\twiddle{\mfd}
    \]
    for all $t\in[0,\min\{T,1\}]$.
    
    Further, if $\widehat{g}_1(t), \widehat{g}_2(t)$ are both solutions to \eqref{EQN Ricci-DeTurck} such that
    \[
    (1-\eps)\twiddle{g}(t)\leq\widehat{g}_1(t), \widehat{g}_2(t)\leq(1+\eps)\twiddle{g}(t) \hspace{3ex} \text{on }\twiddle{\mfd}
    \]
    for all $t\in[0,T]$,
    and there is $\twiddle{\eps}\geq0$ such that
    \[
    (1-\twiddle{\eps})\widehat{g}_1(0)\leq\widehat{g}_2(0)\leq(1+\twiddle{\eps})\widehat{g}_1(0) \hspace{3ex} \text{on }\twiddle{\mfd},
    \]
    then
    \[
    (1-\Lambda\twiddle{\eps})\widehat{g}_1(t)\leq\widehat{g}_2(t)\leq(1+\Lambda\twiddle{\eps})\widehat{g}_1(t) \hspace{3ex} \text{on }\twiddle{\mfd}
    \]
    for all $t\in[0,\min\{T,1\}]$.
    In particular, this gives uniqueness of solutions to \eqref{EQN Ricci-DeTurck} which satisfy \eqref{EQN RDT stability closeness condition}.
\end{theorem}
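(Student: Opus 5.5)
This result is quoted from \cite[Theorem 1.3]{CLL25}, so here is how I would reconstruct its proof, as a perturbative argument about the background Ricci flow $\twiddle{g}(t)$. Write $\widehat{g}=\twiddle{g}+h$. Since $\twiddle{g}$ is itself a solution of \eqref{EQN Ricci-DeTurck} with respect to $\twiddle{g}$, the Ricci-DeTurck equation becomes
\[
\del_t h=\widehat{g}^{ij}\twiddle{\nabla}_i\twiddle{\nabla}_j h+\riem(\twiddle{g})*h+\widehat{g}^{-1}*\widehat{g}^{-1}*\twiddle{\nabla}h*\twiddle{\nabla}h .
\]
This is a quasilinear parabolic equation. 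Its linear part is essentially the time-dependent Lichnerowicz heat operator along $\twiddle{g}(t)$.

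\textbf{Existence.} I would first solve on an exhaustion of $\twiddle{\mfd}$ by compact domains, or on $[\delta,\min\{T,1\}]$ where $\twiddle{g}$ has bounded geometry, using standard short-time theory. The solution on $\twiddle{\mfd}\times[0,\min\{T,1\}]$ is then obtained as a limit, so everything reduces to a priori estimates uniform in the approximation. The estimate to aim for is a Koch--Lamm type scale-invariant norm,
\[
\|h\|_X:=\sup_{t}\sup_{\twiddle{\mfd}}|h|+\sup_{t,x}\sqrt t\,|\twiddle{\nabla}h|+\sup_{x,\,r^2\leq \min\{T,1\}}\Big(r^{-n}\int_0^{r^2}\!\!\int_{B_{\twiddle{g}(t)}(x,r)}|\twiddle{\nabla}h|^2\Big)^{1/2},
\]
together with a bound $\|h\|_X\leq \Lambda\,\|h(0)\|_{L^\infty}$ whenever $\|h(0)\|_{L^\infty}<\overline{\eps}_2$. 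The hypotheses \eqref{EQN assumptions inj} say that after parabolic rescaling by $t$ around any time the flow has uniformly bounded geometry at unit scale. Hence the heat kernel of $\del_t-\lap_{\twiddle{g}(t)}$, acting on symmetric 2-tensors, should satisfy Gaussian upper bounds at scale $\sqrt t$, as well as gradient bounds.

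With these kernel bounds I would write $h$ via Duhamel's formula and estimate the quadratic gradient term in the $X$-norm in the standard Koch--Lamm way; it is small because it is quadratic in a small quantity. The gradient bound $\sqrt t|\twiddle{\nabla}h|$ at each time is obtained from interior Shi-type estimates on the rescaled cylinders.

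\textbf{Main obstacle.} The linear reaction term $\riem(\twiddle{g})*h$ has size $\alpha t^{-1}|h|$. This is not integrable in time, so the naive maximum principle from Lemma \ref{PROP Lee RDT flow evo eqn} only yields growth like $t^{C\alpha}$ from $t=0$, which is useless. My first attempt would be to avoid treating this term as a perturbation. Instead I would work with the full linearised operator $\del_t-\Delta_L$ along $\twiddle{g}(t)$. Its solutions include $\del_s$ of any family of Ricci flows, and I would try to prove directly, using the injectivity radius and curvature bounds, that its kernel has Gaussian bounds uniform on $[0,1]$. As a fallback, I would use that $\widehat{g}\leq(1+\eps)\twiddle{g}$ is a \emph{tensor} inequality. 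I would then apply a tensor maximum principle to the largest eigenvalue of $\widehat{g}$ relative to $\twiddle{g}$: the curvature terms from the Ricci flow of $\twiddle{g}$ and from the DeTurck equation for $\widehat{g}$ cancel to leading order, leaving only terms quadratic in $h$ or involving $\twiddle{\nabla}h$. The estimate for those is $\alpha t^{-1}\eps|h|$ controlled by $\sqrt t|\twiddle{\nabla}h|$, which closes by bootstrapping when $\eps$ is small.

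\textbf{Stability and uniqueness.} For the second part, set $k=\widehat{g}_2-\widehat{g}_1$. It satisfies a linear parabolic system of the same form, with coefficients built from $\widehat{g}_1,\widehat{g}_2$. These coefficients are uniformly $\eps$-close to $\twiddle{g}$ and satisfy the $X$-norm bounds from the first part. Since the relative bound $(1\pm\twiddle{\eps})\widehat{g}_1(0)$ is again a tensor inequality, the same linear estimate (kernel bounds, or the eigenvalue maximum principle) applied now with background $\widehat{g}_1$ gives $|k|\leq\Lambda\twiddle{\eps}$ relative to $\widehat{g}_1$ on $[0,\min\{T,1\}]$. Taking $\twiddle{\eps}=0$ gives uniqueness.
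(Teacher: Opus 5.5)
The paper gives no proof of this statement; it imports it from \cite[Theorem 1.3]{CLL25}. Judged on its own, your outline locates the difficulty correctly but does not overcome it, so it is not a proof.

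The decisive step is uniform-in-time Gaussian (or even $L^\infty\to L^\infty$) bounds for the tensor/Lichnerowicz heat kernel on $[0,1]$. You say this \emph{should} follow from bounded geometry at scale $\sqrt t$, but it does not. Rescaling gives such bounds only on each dyadic slab $[s,2s]$, with a constant $C(n,\alpha)>1$. The zeroth order terms have the scale-critical size $\alpha/t$: this covers the Lichnerowicz term $\riem(\twiddle{g})*h$, and already the $\ric*h$ terms produced by measuring $h$ with the evolving metric $\twiddle{g}(t)$. Composing the slab bounds over the $\log_2(1/s)$ slabs between $s$ and $1$ therefore gives $C^{\log_2(1/s)}=s^{-c}$. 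This is exactly the power loss of the naive maximum principle from Lemma~\ref{PROP Lee RDT flow evo eqn}; in Duhamel/Koch--Lamm form it is the divergence of $\int_0\alpha s^{-1}\,ds$. Nothing in $|\riem|\le\alpha/t$, $\inj\ge\sqrt{t/\alpha}$ alone prevents this compounding. For instance, a single growing self-similar mode $t^{\mu}h_0$ of the linearised operator around an expanding soliton coming out of a cone would realise it. So the uniform estimate is the whole content of the theorem, and it has to use the hypothesis $\sectional\ge-1$, which your proposal never touches.

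Your fallback rests on a cancellation that does not occur. Work in a $\twiddle{g}$-orthonormal frame with $\widehat{g}=\mathrm{diag}(\lambda_1,\dots,\lambda_n)$ and $\lambda_1$ maximal. The DeTurck reaction term and the term $2\lambda_1\twiddle{\ric}(e_1,e_1)$ coming from $\del_t\twiddle{g}$ combine to
\[
2\lambda_1\sum_{a}\bigl(1-\lambda_a^{-1}\bigr)\twiddle{K}(e_1,e_a)\;\approx\;2\sum_{a}h_{aa}\,\twiddle{K}(e_1,e_a),
\]
where $\twiddle{K}$ is the sectional curvature of $\twiddle{g}$ and $h=\widehat{g}-\twiddle{g}$. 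The cancellation happens only at order zero in $h$, which merely says that $\twiddle{g}$ is a solution. The linear part is precisely the $\riem*h$ term of size $\alpha t^{-1}|h|$ that you wanted to remove.

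Moreover, even a leftover of size $\eps\alpha t^{-1}|h|$ would not close by bootstrapping. The inequality $\del_t u\le\eps\alpha t^{-1}u$ still only gives $u(t)\le u(s)(t/s)^{\eps\alpha}$, which is unbounded as $s\searrow0$; smallness of $\eps$ cannot cure a logarithmically divergent coefficient.

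The stability part inherits the same gap, for three reasons.
\begin{itemize}
\item The difference $k=\widehat{g}_2-\widehat{g}_1$ solves a linear system whose zeroth order coefficients include $\riem(\twiddle{g})$ and $\twiddle{\nabla}^2\widehat{g}_1$, of size $\alpha/t$ and at best $\eps/t$.
\item $\widehat{g}_1$ is not a Ricci flow, so there is no background $\widehat{g}_1$ satisfying \eqref{EQN assumptions inj} to which your linear estimate could be applied.
\item Arbitrary solutions that are merely $C^0$-close to $\twiddle{g}$ do not come equipped with your $X$-norm bounds.
\end{itemize}
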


This allows us to prove the key ingredient for the main result.

\begin{theorem}\label{PROP flowing from t=0}
    Let $g(t)$, $\twiddle{g}(t)$, $t\in(0,T]$ be complete Ricci flows defined on manifolds $\mfd$, $\twiddle{\mfd}$. Suppose $g(t)$, $\twiddle{g}(t)$ satisfy the assumptions \eqref{EQN assumptions inj} for some $\alpha>0$, and for all $t\in(0,T]$. Let $d_0,\twiddle{d}_0$ be the uniform limits of $d_{g(t)},d_{\twiddle{g}(t)}$ as $t\searrow0$.
    
    Let $(t_i)_{i\in\nats}$ be a sequence of times in $(0,T]$ with $t_i\searrow0$. For some $\rho>0,0<R<\half$, suppose  we have a map $\ph:\mfd\rightarrow\mfd'$ such that
    \[
    |d_{g(t_i)}(x,y)-d_{\twiddle{g}(t_i)}(\ph(x),\ph(y))|<\eps_i\sqrt{t_i}
    \hspace{5ex} \text{whenever } x,y\in\mfd,\ 0<d_{g(t_i)}(x,y)\leq R\sqrt{t_i\alpha^{-1}},
    \]
    \[
    d_{\twiddle{g}(t_i)}(\ph(x),\ph(y))\geq \rho\sqrt{t_i}
    \hspace{8ex} \text{whenever } x,y\in\mfd, d_{\mfd}(x,y)\geq R\sqrt{t_i\alpha^{-1}},
    \]
    \[
    \text{and }\twiddle{\mfd}=\bigcup_{x\in\mfd}B_{\twiddle{g}(t_i)}(\ph(x),\eps_i\sqrt{t_i}),
    \]
    for each $i$, for some positive sequence $(\eps_i)_{i\in\nats}$. Write $\twiddle{\eps}=\liminf \eps_i$.
     
    Then provided $\twiddle{\eps}$ is sufficiently small, depending only on $n,\alpha,\rho,R$, the following is true. There exists a smooth family of diffeomorphisms $F:(\mfd,g(t))\times(0,\min\{T,1\}]\rightarrow(\twiddle{\mfd},\twiddle{g}(t))$ solving \eqref{EQN HMHF}. Further, $F$ satisfies
    \[
    (1-\eps)g(t)\leq F_{t}^*\twiddle{g}(t) \leq(1+\eps)g(t) \hspace{3ex} \text{on }\mfd,
    \]
    \[
    \twiddle{d}_0(F_{t}(x),\ph(x))\leq C'\sqrt{t}
    \]
    for all $t\in(0,\min\{T,1\}]$, where $\eps=\eps(n,\alpha,\rho,R,\twiddle{\eps})\rightarrow0$ as $\twiddle{\eps}\rightarrow0$, and $C=C'(n,\alpha,\rho,R)>0$. In particular, $F_t\rightarrow \ph$ uniformly as $t\searrow0$.
\end{theorem}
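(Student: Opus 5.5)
The plan is to build, for each $i$, a harmonic map heat flow starting at time $t_i$ from a smoothed version of $\ph$. We then extend it to all of $(t_i,\min\{T,1\}]$ with uniform scale-invariant estimates and pass to a limit as $i\to\infty$.

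\textbf{Step 1: smoothing at time $t_i$.} Rescale by $t_i^{-1}$, so that $t_i^{-1}g(t_i)$ and $t_i^{-1}\twiddle{g}(t_i)$ satisfy $|\sectional|\leq\alpha$ and $\inj\geq\alpha^{-1/2}$. The hypotheses on $\ph$ then become exactly those of Theorem \ref{PROP Katsuda extended}, with $\Delta=\alpha$, $i_0=\alpha^{-1/2}$, $a=\eps_i$, and distance scale $R\alpha^{-1/2}<\half\min\{\Delta^{-1/2},i_0\}$. For $\twiddle{\eps}$ small (passing to a subsequence with $\eps_i$ close to $\twiddle{\eps}$), we obtain diffeomorphisms $f_i:\mfd\to\twiddle{\mfd}$ with $d_{\twiddle{g}(t_i)}(f_i,\ph)<\delta\sqrt{t_i}$ and $||df_i(\xi)|-1|<\delta$. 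In particular $(1-3\delta)g(t_i)\leq f_i^*\twiddle{g}(t_i)\leq(1+3\delta)g(t_i)$, and $\delta=\delta(\twiddle{\eps})\to0$ as $\twiddle{\eps}\to0$.

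\textbf{Step 2: short-time flow and closeness.} Apply Proposition \ref{PROP Long global HMHF existence} to the flows restricted to $[t_i,T]$, which are smooth with bounded curvature and derivatives there. This gives $F^i$ from $f_i$ with $|dF^i_t|^2\leq\Lambda_0$, the derivative bounds $|D^kdF^i_t|^2\leq L_k(t-t_i)^{-k}$, and the distance bound $d_{\twiddle{g}(t)}(f_i,F^i_t)\leq C\sqrt{t-t_i}$. Here the $\alpha/(t-t_i)$ hypothesis holds since $\alpha/t\leq\alpha/(t-t_i)$. Then apply Proposition \ref{PROP HMHF short time diffeo} with $A\approx\alpha/t_i$, for $\delta$ below $\overline{\eps}_1/(8\sqrt n)$. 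This shows $F^i_t$ is a diffeomorphism that is $\overline{\eps}_1$-close to an isometry on $[t_i,t_i+c\,t_i/\alpha]$. In particular, at time $t_i'=(1+c/\alpha)t_i$ the pushforward $\widehat{g}=(F^{i\,-1}_t)^*g$ is a Ricci--DeTurck flow that is uniformly close to $\twiddle{g}$.

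\textbf{Step 3: long-time control via stability (main obstacle).} Now $\twiddle{g}$ itself solves \eqref{EQN Ricci-DeTurck} with respect to itself. Apply Theorem \ref{PROP CLL flow existence} from initial time $t_i'$, after parabolic rescaling so that \eqref{EQN assumptions inj} holds for the time-shifted flow $\twiddle{g}(t_i'+\cdot)$ with a slightly worse constant. This produces a Ricci--DeTurck solution that stays $\Lambda\eps'$-close to $\twiddle{g}(t)$ on the whole interval, with $\eps'$ comparable to the closeness at $t_i'$.

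The hard part is to ensure that this solution is $(F^i_t{}^{-1})^*g(t)$ itself, so that $F^i_t$ exists and stays near-isometric up to $\min\{T,1\}$. To get this, I would run a continuity argument. Let $\widehat{T}$ be the maximal time from Proposition \ref{PROP Long global HMHF existence}, with $\Lambda_0=2n$. On any interval where $F^i$ is a diffeomorphism with closeness below $\overline{\eps}_2$, the uniqueness part of Theorem \ref{PROP CLL flow existence} forces $\widehat{g}$ to be $\Lambda\eps'$-close to $\twiddle{g}$. For $\eps'$ small, this keeps $|dF^i_t|^2\leq(1+\Lambda\eps')n<\Lambda_0$ strictly. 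Proposition \ref{PROP HMHF short time diffeo}, reapplied on rescaled successive intervals $[t,(1+c/\alpha)t]$, then gives the diffeomorphism property on the next dyadic-type interval. This prevents both $\widehat{T}$ from being reached and the diffeomorphism property from failing before $\min\{T,1\}$. The care needed is that Theorem \ref{PROP CLL flow existence} is stated from a time-$0$ slice; I would apply it on each window $[s,\min\{T,1\}]$, using the time-shifted curvature bound $\alpha/(t-s)\geq\alpha/t$.

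\textbf{Step 4: limit.} The bounds $|D^kdF^i_t|^2\leq L_k(t-t_i)^{-k}$ are uniform on compact subsets of $\mfd\times(0,\min\{T,1\}]$ as $i\to\infty$. The point $F^i_t(x)$ stays within $C\sqrt t+\delta\sqrt{t_i}$ of $\ph(x)$, measured using distance distortion (Lemma \ref{PROP distance distortion}) to compare $d_{\twiddle{g}(t)}$, $d_{\twiddle{g}(t_i)}$ and $\twiddle{d}_0$. By Arzel\`a--Ascoli we can therefore extract a smooth subsequential limit $F$ solving \eqref{EQN HMHF}.

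The closeness $(1\pm\eps)$, with $\eps=\Lambda\cdot O(\delta)$, passes to the limit, as does the diffeomorphism property via a local inverse bound and degree argument. The estimate $\twiddle{d}_0(F_t(x),\ph(x))\leq C'\sqrt t$ passes to the limit using $\twiddle{d}_0\leq d_{\twiddle{g}(t)}+\gamma_n\sqrt{\alpha t}$, and it gives $F_t\to\ph$ uniformly.
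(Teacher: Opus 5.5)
Your architecture matches the paper's: Katsuda smoothing at scale $\sqrt{t_i}$, the flow of Proposition \ref{PROP Long global HMHF existence} with $\Lambda_0=2n$, Proposition \ref{PROP HMHF short time diffeo} to start the diffeomorphism property, Theorem \ref{PROP CLL flow existence} to identify $(F_{i,t}^{-1})^*g(t)$ with a controlled Ricci--DeTurck flow and keep $|dF_{i,t}|^2<2n$, then a limit. However, Step 3 as written does not close, because you launch the Chan--Lai--Lee comparison at $t_i'=(1+c/\alpha)t_i$ rather than at $t_i$. At $t_i'$ the only closeness you have is what Proposition \ref{PROP HMHF short time diffeo} supplies, namely $(1\pm\overline{\eps}_1)$. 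Here $\overline{\eps}_1=\overline{\eps}_0(n)/3$ is a fixed dimensional constant that does not improve as $\delta\to0$, so your $\eps'$ is of order $\overline{\eps}_1$. This causes three problems.

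(a) There is no reason for $\eps'<\overline{\eps}_2(n,\alpha)$, so Theorem \ref{PROP CLL flow existence} may not apply at all.

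(b) Even if it applies, the output $\Lambda\eps'$ need not be small (note $\Lambda\geq1$). Your restarts of Proposition \ref{PROP HMHF short time diffeo} at later times $s$ need closeness $\frac{1}{8\sqrt n}\overline{\eps}_1$ at $s$, so they are not justified, and neither is the bound $|dF_{i,t}|^2<2n$.

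(c) You lose $\eps(\twiddle{\eps})\to0$ as $\twiddle{\eps}\to0$. This is exactly what the proof of Theorem \ref{PROP Reifenberg uniqueness} needs, since there $\twiddle{\eps}=0$ is what forces $F_t^*\twiddle{g}(t)=g(t)$.

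Your closing claim $\eps=\Lambda\cdot O(\delta)$ is therefore not supported by Steps 2 and 3.

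The fix is to start the comparison at $t_i$ itself, as the paper does. There $F_{i,t_i}=f_i$ is already a diffeomorphism, and the datum $(f_i^{-1})^*g(t_i)$ is $O(\delta)$-close to $\twiddle{g}(t_i)$ by Step 1. Apply Theorem \ref{PROP CLL flow existence} to the time-shifted flows; no rescaling is needed, since they satisfy \eqref{EQN assumptions inj} with the same $\alpha$, because $\alpha/(t_i+\tau)\leq\alpha/\tau$ and $\sqrt{(t_i+\tau)/\alpha}\geq\sqrt{\tau/\alpha}$. This gives $\widehat{g}_i$ on $[t_i,\min\{T,1\}]$ that is $O(\Lambda\delta)$-close to $\twiddle{g}$. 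Uniqueness then identifies $\widehat{g}_i$ with $(F_{i,t}^{-1})^*g(t)$ for as long as $F_{i,t}$ remains a diffeomorphism. That is your continuity argument, now run from $t_i$, with Step 2 used only to get it started. Choose $\twiddle{\eps}$ so small that $O(\Lambda\delta)$ lies below $\half\overline{\eps}_2$, $\frac{1}{16\sqrt n}\overline{\eps}_1$ and $\frac14$. Then the bootstrap closes: every restart of Proposition \ref{PROP HMHF short time diffeo} is legitimate, $|dF_{i,t}|^2\leq\frac32 n$, hence $\widehat{T}_i\geq\min\{T,1\}$, and the final $\eps$ is $O(\Lambda\delta)\to0$. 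Alternatively, you could keep $t_i'$, but then you would need a version of Proposition \ref{PROP HMHF short time diffeo} whose output closeness is $O(\delta)$ rather than $\overline{\eps}_1$, which its statement does not give.
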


\begin{proof}
    Fix $i\in\nats$ for now. First, we seek to apply Theorem \ref{PROP Katsuda extended} to $\ph$. Let
    \[
    \twiddle{\delta}:=\inf\{\delta:a(n,\alpha,\alpha^{-1/2},\rho,R\alpha^{-1/2};\delta)>\twiddle{\eps}\},
    \]
    where $a$ is as given in Theorem \ref{PROP Katsuda extended}. Note that $\twiddle{\delta}<\infty$ provided $\twiddle{\eps}$ is small enough, depending only on $n,\alpha,R$. Further, $\twiddle{\delta}$ is increasing in $\twiddle{\eps}$, with $\twiddle{\delta}\searrow0$ as $\twiddle{\eps}\searrow0$. By passing to a subsequence, we may assume without loss of generality that $\eps_i<a(n,\alpha,\alpha^{-1/2},\rho,R\alpha^{-1/2};\twiddle{\delta}+2^{-i})$ for each $i$.

    Then we can apply Theorem \ref{PROP Katsuda extended} to $\ph$, considered as a map from $(\mfd,t_i^{-1}g(t_i))$ to $(\twiddle{\mfd},t_i^{-1}\twiddle{g}(t_i))$. This gives a diffeomorphism $f_i:\mfd\rightarrow\twiddle{\mfd}$ which satisfies
    \begin{equation}\label{EQN Getting RDT flow, deriv bd after smoothing, tang vector}
        ||d(f_i)_x(\xi)|_{\twiddle{g}(t_i)}-1|<\twiddle{\delta}+2^{-i} \text{ for every } (x,\xi)\in T\mfd \text{ with }|\xi|_{g(t_i)}=1,
    \end{equation}
    \begin{equation}\label{EQN Getting RDT flow, dist bd after smoothing}
        d_{\twiddle{g}(t_i)}(f_i(x),\ph(x))\leq (\twiddle{\delta}+2^{-i})\sqrt{t_i} \text{ for every } x\in\mfd.
    \end{equation}

    Reducing the upper bound on $\twiddle{\eps}$ if necessary such that $\twiddle{\delta}\leq\frac{1}{4}$, \eqref{EQN Getting RDT flow, deriv bd after smoothing, tang vector} gives
    \begin{equation}\label{EQN Getting RDT flow, deriv bd after smoothing, metric sandwich}
        (1-(\twiddle{\delta}+2^{-i}))g(t_i) \leq f_i^*\twiddle{g}(t_i) \leq (1+(\twiddle{\delta}+2^{-i}))g(t_i).
    \end{equation}

    Now, we apply Proposition \ref{PROP Long global HMHF existence}, taking $t=t_i$ as the start time, with $\Lambda_0=2n$. We note that the finite bounds for the derivatives of $\riem(g(t)),\riem(\twiddle{g}(t))$ when $t\geq t_i>0$ follow from Shi's estimates \cite{Shi}. This gives a smooth solution $F_i:\mfd\times[t_i,\widehat{T}_i]\rightarrow\twiddle{\mfd}$ to \eqref{EQN HMHF}. Writing $F_{i,t}(\cdot):=F_i(\cdot,t)$, we have that $F_{i,t_i}=f_i$, and $0<\widehat{T}_i\leq T$ is the maximal time such that
    \[
    |dF_{i,t}|^2\leq2n
    \]
    for $t\in[0,\widehat{T}_i]$. Further, we have
    \begin{equation}\label{EQN Getting RDT flow, higher deriv bd after flowing}
        |D^kdF_{i,t}|^2\leq L_k(n,\alpha,k)(t-t_i)^{-k},
    \end{equation}
    \begin{equation}\label{EQN Getting RDT flow, dist bd after flowing}
        \twiddle{d}_0(F_{i,t}(x),\ph(x))\leq C(n,\alpha,\rho,R)\sqrt{t}
    \end{equation}
    for all $(x,t)\in\mfd\times[t_i,\widehat{T}_i]$, where for the second inequality we have combined \eqref{EQN Long global HMHF existence, dist result}, \eqref{EQN Getting RDT flow, dist bd after smoothing}, and Lemma \ref{PROP distance distortion}.

    Now, reduce the upper bound on $\twiddle{\eps}$ if necessary so that $\twiddle{\delta}<\frac{1}{16\sqrt{n}}\overline{\eps}_1$, where $\overline{\eps}_1=\overline{\eps}_1(n)>0$ is from Proposition \ref{PROP HMHF short time diffeo}. Then for sufficiently large $i$, we may apply Proposition \ref{PROP HMHF short time diffeo} to obtain $\tau_i=\tau_i(n,\alpha,t_i)>0$ such that $F_{i,t}$ is a diffeomorphism for every $t\in[t_i,t_i+\tau_i]\cap[0,T]$.

    We further reduce the upper bound on $\twiddle{\eps}$, so that $\twiddle{\delta}<\half\overline{\eps}_2$, where $\overline{\eps}_2=\overline{\eps}_2(n,\alpha)>0$ is from Theorem \ref{PROP CLL flow existence}. Then by applying this theorem, we see that for sufficiently large $i$, there is a solution $\widehat{g}_i(t)$ to \eqref{EQN Ricci-DeTurck} on $\twiddle{\mfd}\times[t_i,\min\{T,1\}]$ with $\widehat{g}_i(t_i)=(f_i^{-1})^*g(t_i)$, and such that
    \[
    (1-\Lambda(\twiddle{\delta}+2^{-i}))\twiddle{g}(t) \leq \widehat{g}_i(t) \leq (1+\Lambda(\twiddle{\delta}+2^{-i}))\twiddle{g}(t)
    \]
     for all $t\in[t_i,\min\{T,1\}]$, where $\Lambda=\Lambda(n,\alpha)$. Further, this is the unique solution to \eqref{EQN Ricci-DeTurck} with this initial data, so we see that we must have $\widehat{g}_i(t) \equiv (F_{i,t}^{-1})^*g(t)$ while $F_{i,t}$ continues to be a diffeomorphism. We have already obtained that $F_{i,t}$ is a diffeomorphism for a small positive time, and therefore $(F_{i,t}^{-1})^*g(t)=\widehat{g}_i(t)$ for that time. Reducing the upper bound on $\twiddle{\eps}$ such that $\Lambda\twiddle{\delta}<\frac{1}{4}$, we see that $\Lambda(\twiddle{\delta}+2^{-i})\leq\frac{1}{4}$ for sufficiently large $i$. Then $F_{i,t}$ cannot cease to be a diffeomorphism while $t\leq\min\{T,1\}$, so
    \[
    (1-\Lambda(\twiddle{\delta}+2^{-i}))\twiddle{g}(t) \leq (F_{i,t}^{-1})^*g(t) \leq (1+\Lambda(\twiddle{\delta}+2^{-i}))\twiddle{g}(t),
    \]
    for all $t\in[t_i,\min\{\widehat{T}_i,1\}]$.

    Further, for $t\in[t_i,\min\{\widehat{T}_i,1\}]$, we have
    \[
    |dF_{i,t}|^2\leq n(1+2\Lambda(\twiddle{\delta}+2^{-i}))\leq\frac{3n}{2}.
    \]
    Since $\widehat{T}_i$ is the maximum time such that $|dF_{i,t}|^2\leq2n$, we see that $\widehat{T}_i\geq\min\{T,1\}$. Hence we have that for all $t\in[t_i,\min\{T,1\}]$, the map $F_{i,t}$ satisfies
    \begin{equation}\label{EQN Getting RDT flow, inverse metric sandwich}
        (1-\Lambda(\twiddle{\delta}+2^{-i}))\twiddle{g}(t) \leq (F_{i,t}^{-1})^*g(t) \leq (1+\Lambda(\twiddle{\delta}+2^{-i}))\twiddle{g}(t),
    \end{equation}
    and therefore
    \begin{equation}\label{EQN Getting RDT flow, bound on deriv of inverse}
        |dF_{i,t}^{-1}|^2\leq n(1+\Lambda(\twiddle{\delta}+2^{-i}))\leq\frac{5n}{4}
    \end{equation}
    and
    \begin{equation}\label{EQN Getting RDT flow, metric sandwich after RDT comparison}
        (1-2\Lambda(\twiddle{\delta}+2^{-i}))g(t) \leq F_{i,t}^*\twiddle{g}(t) \leq (1+2\Lambda(\twiddle{\delta}+2^{-i}))g(t).
    \end{equation}
    From \eqref{EQN Getting RDT flow, inverse metric sandwich} and \eqref{EQN Getting RDT flow, metric sandwich after RDT comparison}, we also obtain that if $x,y\in\mfd$ and $t\in[t_i,\min\{T,1\}]$, then
    \begin{equation}\label{EQN Getting RDT flow, dist distortion}
        (1-2\Lambda(\twiddle{\delta}+2^{-i}))d_{g(t)}(x,y) \leq d_{\twiddle{g}(t)}(F_{i,t}(x),F_{i,t}(y)) \leq (1+2\Lambda(\twiddle{\delta}+2^{-i}))d_{g(t)}(x,y).
    \end{equation}

    Finally, we take a subsequential limit of the $F_i$ as $i\rightarrow\infty$ to obtain the required $F$, with \eqref{EQN Getting RDT flow, higher deriv bd after flowing} and \eqref{EQN HMHF} allowing us to take a smooth limit for $t>0$. Writing $\eps:=2\Lambda\twiddle{\delta}$, the required estimates for $F$ follow from \eqref{EQN Getting RDT flow, metric sandwich after RDT comparison} and \eqref{EQN Getting RDT flow, dist bd after flowing}, in particular showing that $F_t$ is a local diffeomorphism for all $t\in(0,\min\{T,1\}]$. Injectivity of $F_t$ for $t\in(0,\min\{T,1\}]$ follows from \eqref{EQN Getting RDT flow, dist distortion}, while \eqref{EQN Getting RDT flow, bound on deriv of inverse} allows us to take the limit of $F_{i,t}^{-1}$ to obtain $F_t^{-1}$, giving surjectivity.
\end{proof}

\section{Obtaining uniqueness}\label{SEC uniqueness}

In this final section, we will combine the uniform Reifenberg condition with the results of the previous section to prove the main result. First, note that it is possible to convert the Reifenberg assumption into control on $|\riem|$ which is slightly stronger than assumed in \eqref{EQN assumptions inj}.

\begin{lemma}[Deruelle--Schulze--Simon, {\cite[Lemma A.2]{DSS2}}]\label{PROP Reifenberg curvature bound}
  Let $(M^n,g(t))_{t\in (0,T]}$, $T\leq 1$, be a smooth Ricci flow, satisfying $\ric(g(t)) \geq -g(t) , \quad |\riem(g(t))| \leq \alpha/t$, where 
 $M$ is connected but $(M^n,g(t))$ not  necessarily complete. Assume furthermore that $B_{g(t)}(x_0,1)$ is compactly contained in $M$ for all $t\in (0,T],$ and let $X\coloneqq  (\cap_{s \in (0,T]} B_{g(s)}(x_0,\frac 1 2))$ be as in Lemma \ref{PROP distance distortion} and endowed with the metric $d_0$.
 Assume that $ B_{d_0}(x_0,r)$ is as in Lemma \ref{PROP distance distortion}, and furthermore that all tangent cones of $ B_{d_0}(x_0,r)$ are isometric to  $(\reals^n,d(\delta)).$ 
Let $\eps>0$ be given. Then there exists a $\widehat{\delta}>0$ depending on $\eps,n,x_0$ and   the solution, such that if 
$0<t\leq \widehat{\delta}$ then 
\begin{equation*}
|\riem(g(t))| \leq  \frac{\eps}{t},\quad\text{ on  $ B_{d_0}\left(x_0,\tfrac r 4\right)$.}
\end{equation*} 
 \end{lemma}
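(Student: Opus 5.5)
We argue by contradiction combined with a blow-up and compactness argument. Suppose the conclusion fails. Then there are $\eps>0$, times $s_k\searrow0$ and points $x_k\in B_{d_0}(x_0,\tfrac r4)$ with
\[
|\riem(g(s_k))|(x_k)>\frac{\eps}{s_k}.
\]
Consider the parabolically rescaled flows $g_k(t):=s_k^{-1}g(s_kt)$ for $t\in(0,s_k^{-1}T]$. These satisfy:
\begin{itemize}
\item the same scaling-invariant bound $|\riem(g_k(t))|\le\alpha/t$;
\item a Ricci lower bound $\ric(g_k)\ge -s_k g_k\to0$;
\item $|\riem(g_k(1))|(x_k)>\eps$.
\end{itemize}
Because $B_{g(t)}(x_0,1)\Subset M$, the rescaled balls $B_{g_k(t)}(x_k,s_k^{-1/2}/4)$ are compactly contained for $t\in(0,1]$, and their radii diverge. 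Shi's estimates, applied on $[\tfrac12,1]$, then give local bounds on all derivatives of curvature.

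To pass to a limit we need noncollapsing at $g_k(1)$. For this I would use Lemma \ref{PROP distance distortion}, which gives $d_0-\gamma_n\sqrt{\alpha t}\le d_{g(t)}\le e^{t}d_0$. After rescaling, $(M,d_{g_k(1)},x_k)$ is within bounded additive distortion $\gamma_n\sqrt\alpha$ of $(M,s_k^{-1/2}d_0,x_k)$, and the latter converges, after a subsequence, to a tangent cone at $\lim x_k$, namely $\reals^n$. We must also make sure that rescaling $d_0$ around the moving points $x_k$ still gives Euclidean limits. If $x_k\to\bar x$ and the scale $s_k^{1/2}$ is small relative to $d_0(x_k,\bar x)$, the relevant limit is a tangent cone at points near $\bar x$. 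The hypothesis that all tangent cones of $B_{d_0}(x_0,r)$ are Euclidean, together with compactness of the closed ball (it sits inside the bounded set $X$), gives uniform Reifenberg flatness at small scales on $B_{d_0}(x_0,r/2)$. I would derive this by a contradiction argument: a limit of rescalings at varying points with scales tending to $0$ is a tangent cone of a limit point, using Bishop–Gromov/Cheeger–Colding volume continuity under $\ric\ge-1$. Hence $(M,s_k^{-1/2}d_0,x_k)\to(\reals^n,0)$.

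Volume noncollapsing of $g_k(t)$ at scale $\sqrt t$ then follows from the $\alpha/t$ bound and the near-Euclidean distance structure. Indeed, balls of radius $\sqrt t$ in $g_k(t)$ are almost $(1+\alpha)$-bi-Lipschitz to Euclidean balls on scales where curvature is bounded. Alternatively, volume convergence (Colding) gives $\vol$ close to Euclidean at large scale, and Bishop–Gromov transfers this down. Hamilton's compactness theorem then yields a smooth complete limit flow $(M_\infty,g_\infty(t),x_\infty)$ for $t\in(0,\infty)$. It satisfies $\ric\ge0$ and $|\riem|\le\alpha/t$, its $t\searrow0$ distance limit is $(\reals^n,d_{\mathrm{Euc}})$ by distortion estimates passed to the limit, and $|\riem(g_\infty(1))|(x_\infty)\ge\eps$.

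The final and hardest step is to show that such a limit flow is flat, which contradicts $|\riem|(x_\infty,1)\ge\eps$. The flow is an ancient-in-the-sense-of-initial-data flow from Euclidean space with $\ric\ge0$. By Bishop–Gromov and distance distortion, the asymptotic volume ratio of $g_\infty(t)$ equals that of the initial $\reals^n$, namely $\omega_n$, at all large scales. More carefully, the volume of $B_{g_\infty(t)}(x,\rho)$ for $\rho\gg\sqrt t$ is close to $\omega_n\rho^n$, since the $g_\infty(t)$-distances are within $\gamma_n\sqrt{\alpha t}$ of the Euclidean ones and volume is continuous (Cheeger–Colding). With $\ric\ge0$ and asymptotic volume ratio $\omega_n$, Bishop–Gromov rigidity forces $g_\infty(t)$ to be isometric to $\reals^n$, and hence to be flat. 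This is the step I expect to require the most care: justifying volume convergence from the initial data to positive times in the limit flow, which is done via the Cheeger–Colding volume continuity under $\ric\ge0$. With this in place the contradiction is immediate, and $\widehat\delta$ depends only on $\eps,n,x_0$ and the solution.
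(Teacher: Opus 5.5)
The paper does not prove this lemma. It imports it from \cite[Lemma A.2]{DSS2}, and adds only the remark that under the uniform Reifenberg hypothesis $\widehat\delta$ becomes independent of $x_0$. So there is no in-paper argument to compare against.

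Your argument is correct and is the standard blow-up route for results of this type. Its steps are:
\begin{itemize}
\item contradiction at bad points $(x_k,s_k)$ and parabolic rescaling;
\item noncollapsing from Colding's volume convergence plus Bishop--Gromov;
\item a Hamilton limit with $\ric\ge0$, $|\riem|\le\alpha/t$ and Euclidean asymptotic cone;
\item flatness from the equality case of Bishop--Gromov.
\end{itemize}
It also makes the paper's remark transparent: $x_0$ enters only through the small-scale flatness modulus of $d_0$ around the points $x_k$.

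Four steps need tightening, though none is fatal.
\begin{itemize}
\item A limit of $(M,\lambda_k^{-1}d_0,x_k)$ with $x_k\to\bar x$ and $\lambda_k\to0$ is in general \emph{not} a tangent cone at $\bar x$, for example if $d_0(x_k,\bar x)/\lambda_k\to\infty$. What actually gives uniform flatness is the volume argument:
  \begin{itemize}
  \item non-flatness of $B_{d_0}(x_k,\lambda_k)$ forces a definite volume-ratio deficit, by Cheeger--Colding almost rigidity;
  \item Bishop--Gromov monotonicity carries this deficit up to any fixed small scale $\rho$;
  \item letting $k\to\infty$ then contradicts density one at $\bar x$.
  \end{itemize}
  This is the Dini-type argument behind \cite[Lemma 2.1]{DSS2}.
\item The volume argument needs $(M,g(t))$ to be uniformly noncollapsed on $B_{d_0}(x_0,r/2)$ as $t\searrow0$. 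This follows from Colding's theorem at $x_0$, whose tangent cone is Euclidean, together with relative volume comparison.
\item Your first justification of noncollapsing at scale $\sqrt t$ (near bi-Lipschitz to Euclidean balls) is not valid. The bound $|\riem|\le\alpha/t$ alone gives no injectivity radius control, so keep only the Colding/Bishop--Gromov justification.
\item Compact containment of the rescaled balls holds for all $t\in(0,s_k^{-1}T]$ by the same argument you give for $(0,1]$. That is what a limit on $(0,\infty)$ requires, although a limit on $(0,1]$ already suffices for the rigidity step at $t=1$.
\end{itemize}
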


By examining the proof of \cite[Lemma A.2]{DSS2}, we see that if $(\mfd,d_0)$ is uniformly Reifenberg, then $\widehat{\delta}$ no longer depends on $x_0$. Hence when $(\mfd,d_0)$ is uniformly Reifenberg, we may assume that both $g(t)$ and $\twiddle{g}(t)$ satisfy
\[
|\riem|\leq\frac{\alpha(t)}{t},
\]
where $\alpha(t)\rightarrow0$ as $t\searrow0$.

We are now ready to prove Theorem \ref{PROP Reifenberg uniqueness}.

\begin{proof}[Proof of Theorem \ref{PROP Reifenberg uniqueness}.]
    As above, the uniform Reifenberg condition gives that $g(t)$, $\twiddle{g}(t)$ both satisfy
    \[
    |\riem|\leq\frac{\alpha(t)}{t},
    \]
    where $\alpha\rightarrow0$ as $t\searrow0$.

    Consider some fixed $t\in(0,\min\{T,1\}]$. By Lemma \ref{PROP distance distortion}, we have that
    \[
    d_0 - \gamma(n)\sqrt{\alpha(t)t} \leq d_{g(t)} \leq e^{(n-1)t}d_0,
    \]
    \[
    \twiddle{d}_0 - \gamma(n)\sqrt{\alpha(t)t} \leq d_{\twiddle{g}(t)} \leq e^{(n-1)t}\twiddle{d}_0,
    \]
    so
    \[
    | d_{g(t)} - \ph^*d_{\twiddle{g}(t)} |
    \leq C_n \left( t\,d_{g(t)} + \sqrt{\alpha(t)t} \right)
    \]
    since $d_0=\ph^*\twiddle{d}_0$ and $t\leq1$.
    Therefore if $d_{g(t)}\leq\half\sqrt{\alpha_0^{-1}t}$, where $\alpha_0:=\sup_{t\in(0,\min\{T,1\}]}\alpha(t)$, then
    \[
    | d_{g(t)} - \ph^*d_{\twiddle{g}(t)} |
    \leq C_n \left( \frac{t}{\sqrt{\alpha_0}} + \sqrt{\alpha(t)} \right) \sqrt{t}.
    \]
    
    Since $\alpha_0^{-\half}t + \sqrt{\alpha(t)}\rightarrow0$ as $t\searrow0$, and $\ph$ is surjective, we may apply Theorem \ref{PROP flowing from t=0} with $\alpha=\alpha_0$ to obtain a solution $F:(\mfd,g(t))\times(0,\min\{T,1\}]\rightarrow(\twiddle{\mfd},\twiddle{g}(t))$ to \eqref{EQN HMHF}, such that
    \[
    g(t) = F_t^*\,\twiddle{g}(t)
    \]
    for all $t\in(0,\min\{T,1\}]$, and $F_t(x)\rightarrow \ph(x)$ as $t\searrow0$ for all $x\in\mfd$.

    Since $F_t:(\mfd,g(t))\rightarrow(\twiddle{\mfd},\twiddle{g}(t))$ is an isometry of Riemannian manifolds for each $t\in(0,\min\{T,1\}]$, we have $\lap_{g(t),\twiddle{g}(t)} F_t=0$. Then $F$ is constant in time by \eqref{EQN HMHF}, so $F_t\equiv\ph$.
\end{proof}

\appendix
\section{Appendix}

We use this appendix to collect various results which are of use in \S\ref{SEC HMHF} and \S\ref{SEC R-DT flow}. First, we have several results from \cite{MCLeePaper}, concerning solutions to the harmonic map heat flow.

\begin{proposition}[Lee, {\cite[Proposition 2.2]{MCLeePaper}}]\label{PROP Local HMHF existence}
    Suppose $g(t), t\in[0,T]$ is a smooth Ricci flow on some open set $U\Subset\mfd^n$ (not necessarily complete), and $\twiddle{g}(t), t\in[0,T]$ is a smooth complete Ricci flow on $\twiddle{\mfd}^n$, such that for $t\in[0,T]$,
    \begin{enumerate}[label=(\roman*)]
        \item $\sup_U|\riem(g(t))| + \sup_{\twiddle{\mfd}}|\riem(\twiddle{g}(t))| \leq \rho^{-2}$,
        \item $\sup_U|\nabla\riem(g(t))| \leq \rho^{-3}$,
        \item $\sup_{\twiddle{\mfd}}|\twiddle{\nabla}^k\riem(\twiddle{g}(t))| < \infty$ for all $k\geq0$,
    \end{enumerate}
    for some $\rho>0$. Let $\Lambda_0>0$, and let $f:\overline{U}\rightarrow\twiddle{\mfd}$ be a smooth map with $|df|_{g(0),\twiddle{g}(0)}^2\leq\Lambda_0$. Write $U_\rho:=\{x\in U:B_{g(0)}(x,\rho)\Subset U\}$. Then there exists $T_0(n,\Lambda_0)>0$, and a smooth solution $F:U_\rho\times[0,\min\{T,T_0\rho^2\}]\rightarrow\mfd$ to \eqref{EQN HMHF}, such that $F_0=f$ on $U$ and
    \[
    |dF_t|^2\leq10\Lambda_0
    \]
    on $U_\rho\times[0,\min\{T,T_0\rho^2\}]$.
\end{proposition}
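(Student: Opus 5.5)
This is Lee's local existence result for harmonic map heat flow, so I would prove it by the standard route for \eqref{EQN HMHF}: embed the target, solve a localised parabolic problem, and use a Bochner-type estimate to control $|dF_t|^2$ for a time of order $\rho^2$. After parabolic rescaling $g\mapsto\rho^{-2}g$, $\twiddle{g}\mapsto\rho^{-2}\twiddle{g}$, $t\mapsto\rho^{-2}t$, I may assume $\rho=1$. The quantity $|df|^2$ is invariant under this rescaling, and the goal becomes a solution on $U_1\times[0,\min\{T,T_0\}]$ with $T_0=T_0(n,\Lambda_0)$.

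\textbf{Existence step.} Since $\twiddle{\mfd}$ is complete with bounded curvature and all derivatives of curvature bounded (assumption (iii)), Nash embed it isometrically at time $0$. Alternatively, because the metric is time dependent, work in local harmonic or exponential coordinates of $\twiddle{g}(t)$. Then write \eqref{EQN HMHF} as a quasilinear parabolic system
\[
\del_tF=\lap_{g(t)}F+\Gamma(F)(\nabla F,\nabla F).
\]
I would solve a Dirichlet problem on an exhaustion of $U$ by smooth domains, with boundary data $f$. Short-time existence comes from standard parabolic theory, and the solution is extended as long as $|dF|$ stays bounded. The boundary behaviour only matters near $\del U$, and we discard that region by restricting to $U_1$.

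\textbf{Gradient estimate.} This is the key step. The Bochner formula for the harmonic map heat flow along Ricci flows on both domain and target gives
\[
\heatop|dF|^2\leq-2|DdF|^2+2|\riem(\twiddle{g})|\,|dF|^4,
\]
where the domain Ricci terms cancel against the time derivative of $g(t)$. With $|\riem(\twiddle{g})|\leq1$ this becomes $\heatop|dF|^2\leq 2|dF|^4$. The ODE comparison $y'=2y^2$ with $y(0)=\Lambda_0$ stays at most $10\Lambda_0$ for $t\leq c/\Lambda_0$.

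To localise, multiply by a cutoff $\phi(d_{g(t)}(x_0,\cdot))$ supported in $B(x_0,1)$. Two facts control the cutoff:
\begin{itemize}
\item the time derivative of the distance function and its Laplacian are controlled by the curvature bound $|\riem(g)|\leq1$ and by $|\nabla\riem|\leq1$, via Hessian comparison and Perelman's distance distortion lemma;
\item the cutoff term $-2\nabla\phi\cdot\nabla|dF|^2$ is absorbed by the good term $-2|DdF|^2$.
\end{itemize}
The maximum principle then gives $|dF|^2\leq10\Lambda_0$ on $U_1$ for $t\leq T_0(n,\Lambda_0)$. I expect a Lee--Tam style local maximum principle to handle the fact that $F$ is a priori defined only locally.

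\textbf{Main obstacle.} The crux is making the local gradient bound independent of the domain exhaustion and of the boundary data, so that one can pass to a limit. With the gradient bound uniform, higher derivative bounds follow interiorly from parabolic Schauder theory in the target coordinate charts. These charts have uniform size, since the target curvature is bounded by $1$ and the injectivity radius can be avoided by working with $f$-pulled-back local lifts. A diagonal subsequence then gives the smooth solution $F$ on $U_1$ with $F_0=f$.
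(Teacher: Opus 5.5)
The paper gives no proof of this statement; it quotes it from Lee. Judged on its own terms, your sketch has a genuine gap. It sits exactly at the step you call the main obstacle, and the tools you name do not close it.

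\emph{Existence time.} A solution of the Dirichlet problem on $\Omega$ with data $f$ can only be continued while $|dF|$ stays bounded on all of $\overline{\Omega}$, and you give no boundary gradient estimate. Any such estimate depends on the geometry of $\del\Omega$ and on higher derivatives of $f$, not only on $n$ and $\Lambda_0$. If $|dF|$ blows up at $\del\Omega$ at some time $\tau\ll T_0\rho^2$, the flow ends everywhere. So the region near the boundary cannot simply be discarded. Exhausting $U$ makes matters worse, since $g(t)$ is incomplete on $U$ and the boundaries of the exhausting domains degenerate.

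\emph{Localisation.} Even granting existence, the cutoff argument does not close, because the reaction term is superlinear. (Your Bochner inequality also drops the target Ricci term; the correct form is $\heatop|dF|^2\le-2|DdF|^2+C_n|\riem(\twiddle{g})|(|dF|^2+|dF|^4)$, which is harmless.) For $\Phi=\phi|dF|^2$ the reaction term becomes $C\phi|dF|^4=C\Phi^2/\phi$, which is uncontrolled where $\phi$ is small. Absorbing the cross term into $-2|DdF|^2$ does nothing about this. A Lee--Tam principle does not rescue it either. Theorem \ref{PROP Lee Tam max princ improved} needs $\ph\le\alpha t^{-b}$ on the whole domain and $L\le\alpha t^{-1}$. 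For $\ph=|dF|^2-\eta(t)$, with $\eta$ your ODE solution, one has $L=C(1+|dF|^2+\eta)$. So its hypotheses presuppose $|dF|^2\lesssim t^{-1}$ up to the boundary, which is precisely what is at stake. In fact no interior bound for subsolutions can be independent of the boundary behaviour, as large boundary values for the linear heat equation already show.

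What is missing is a device that removes the boundary from the problem, and this is where hypothesis (ii) has to enter. In your sketch it does no real work, since Perelman's distance lemma and Hessian comparison need only bounds on $\riem(g)$. One way to do this, for instance, is a Hochard-type conformal completion:
\begin{itemize}
\item Fix $u\ge0$ vanishing on $U_\rho$ and blowing up at the boundary of some $U'$ with $U_\rho\subset U'\subset U$, with derivatives controlled at scale $\rho$, and set $g_u(t):=e^{2u}g(t)$.
\item Since $u$ is independent of time, controlling $\nabla^2_{g(t)}u$ along the flow requires $\del_t\Gamma=g^{-1}*\nabla\ric$ to be bounded, which is exactly (ii). Then $(U',g_u(t))$ is complete with $|\riem(g_u)|+|\del_t g_u|\le C\rho^{-2}$.
\item Because \eqref{EQN HMHF} is local, a solution with domain metric $g_u(t)$, restricted to $U_\rho$, solves the original problem. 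Also $|df|^2_{g_u(0)}\le|df|^2_{g(0)}\le\Lambda_0$.
\item The evolution of $|dF|^2_{g_u}$ picks up an extra term $(\del_t g_u+2\ric(g_u))*dF*dF$ of size $C\rho^{-2}|dF|^2$, because the domain Ricci cancellation is lost.
\item The global maximum principle on this complete manifold, combined with your ODE comparison, now gives $|dF|^2\le10\Lambda_0$ up to $T_0(n,\Lambda_0)\rho^2$. This a priori bound is what lets a short-time solution be continued for that long.
\end{itemize}
Without some such step, your argument gives at best an interior bound for as long as a solution happens to exist, with no lower bound on that time.
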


\begin{proposition}[Lee, {\cite[Proposition 2.1]{MCLeePaper}}]\label{PROP Lee HMHF bound on derivs}
    Suppose $(\mfd^n, g(t))$, $(\twiddle{\mfd}^m, \twiddle{g}(t))$, $t \in [0, T]$ are two smooth complete solutions to the Ricci flow, such that $|\riem(g(t))| + |\riem(\twiddle{g}(t))| \leq \alpha t^{-1}$ for $t\in(0,T]$, for some $\alpha>0$. Let $x_0\in\mfd$, $r>0$ be such that $B_{g(0)}(x_0,4r)\Subset\mfd$.
    
    Suppose $F:B_{g(0)}(x_0,4r)\times[0,T] \rightarrow \twiddle{\mfd}$ is a smooth solution to \eqref{EQN HMHF} such that $|dF_t|^2\leq\Lambda_0$ for all $t\in[0,T]$ for some $\Lambda_0>0$. Then for all $k\in\nats$, there exists $L_k(n,m,\alpha,\Lambda_0,k),S_k(n,m,\alpha,\Lambda_0,k)>0$ such that
    \[
    |D^kdF_t|^2 \leq L_kt^{-k}
    \]
    for all $(x,t)\in B_{g(0)}(x_0,r)\times(0,\min\{T,S_kr^2\}]$.
\end{proposition}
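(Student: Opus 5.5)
The plan is to prove the estimates by induction on $k$, using Bernstein--Shi type quantities and a localised maximum principle. First, a parabolic rescaling normalises the geometry. Replace $g(t),\twiddle{g}(t)$ by $r^{-2}g(r^2t),r^{-2}\twiddle{g}(r^2t)$. The hypotheses $|\riem|\le\alpha t^{-1}$ and $|dF_t|^2\le\Lambda_0$ are scale invariant. The claimed bound $|D^kdF_t|^2\le L_kt^{-k}$ on $(0,\min\{T,S_kr^2\}]$ is also invariant. Hence it suffices to take $r=1$, $B_{g(0)}(x_0,4)\Subset\mfd$, and prove the bound for $t\le S_k$ with $S_k\le1$.

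Second, I need bounds on the curvature derivatives and a good cutoff function. Shi's local derivative estimates, applied on parabolic cylinders of size $\sqrt t$ under $|\riem|\le\alpha/t$, give $|\nabla^j\riem(g(t))|\le C_j t^{-1-j/2}$ on $B_{g(0)}(x_0,3)$, and likewise for $\twiddle{g}$ on all of $\twiddle{\mfd}$. The distance distortion estimate (Lemma \ref{PROP distance distortion}, or directly \cite[Lemma 3.1]{SimonToppingMollification}) shows that for $t\le S$ small the balls $B_{g(t)}(x_0,\cdot)$ and $B_{g(0)}(x_0,\cdot)$ are comparable. Perelman's Laplacian comparison under $|\riem|\le\alpha/t$ gives $(\del_t-\lap)d_{g(t)}(x_0,\cdot)\ge -C\sqrt{\alpha/t}$ away from $x_0$. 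From this I build a cutoff $\psi(d_{g(t)}(x_0,x)+C\sqrt{\alpha t})$ supported in $B_{g(0)}(x_0,4)$, equal to $1$ on $B_{g(0)}(x_0,1)$, and satisfying $(\del_t-\lap)\psi\le C t^{-1/2}$ and $|\nabla\psi|^2/\psi\le C$.

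Third, I compute the evolution inequalities for the harmonic map heat flow coupled to Ricci flow. For $|dF|^2$, the domain Ricci terms cancel against $\del_tg$, leaving
\[
\heatop|dF|^2\le-2|DdF|^2+C|\riem(\twiddle{g})||dF|^4\le -2|DdF|^2+C\alpha\Lambda_0^2t^{-1}.
\]
For $k\ge1$, commuting derivatives gives
\[
\heatop|D^kdF|^2\le-2|D^{k+1}dF|^2+\sum \left(\text{terms } \nabla^{j}\riem * D^{a_1}dF*\cdots*D^kdF\right).
\]
Here the indices are constrained so that every term scales like $t^{-k-1}$ once the inductive hypotheses $|D^\ell dF|^2\le L_\ell t^{-\ell}$ for $\ell<k$ and the Shi bounds are inserted. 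The key auxiliary quantity is Shi's product
\[
\Phi_k:=t^k|D^kdF|^2\big(A_k+t^{k-1}|D^{k-1}dF|^2\big),
\]
with $A_k$ large depending on $L_{k-1}$. The good term $-2t^{k-1}|D^kdF|^2\cdot t^k|D^kdF|^2$, coming from the evolution of the second factor, yields $\heatop\Phi_k\le -c\,t^{-1}\Phi_k^2/A_k^2+C t^{-1}$ (plus a gradient cross term absorbed by choosing $A_k$ large). For $k=1$, use $A_1+|dF|^2$ directly.

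Finally, apply the maximum principle to $\psi\Phi_k$. At an interior maximum, the cutoff terms are controlled by $Ct^{-1/2}\Phi_k+C\Phi_k^{3/2}/A_k$-type errors, which are dominated by the quadratic absorption term. This gives $\psi\Phi_k\le C(n,\alpha,\Lambda_0,k)$, and since $\Phi_k\ge A_k t^k|D^kdF|^2$, the desired bound follows on $B_{g(0)}(x_0,1)\times(0,S_k]$. At $t=0$ we have $\Phi_k=0$, so there is no boundary issue. The main obstacle is this bookkeeping: the cutoff error terms coming from Perelman's $t^{-1/2}$ Laplacian bound and the lower order curvature terms must all be shown to scale no worse than $t^{-1}$. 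This is exactly why $S_k$ must be small, so that the ball shrinkage stays inside $B_{g(0)}(x_0,4)$ and the losses from the cutoff are absorbed.
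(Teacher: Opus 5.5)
The paper does not prove this statement; it quotes it from \cite{MCLeePaper}. Your Bernstein--Shi scheme is the natural route: rescale to $r=1$, use Shi bounds $|\nabla^j\riem|\leq C_jt^{-1-j/2}$ for both flows, form the product $\Phi_k$ with the cross term absorbed by taking $A_k$ large, and run a maximum principle for $\psi\Phi_k$. The inequality $\heatop\Phi_k\leq -ct^{-1}\Phi_k^2+Ct^{-1}$ and the absorption at an interior maximum both check out.

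\textbf{The cutoff step fails as written.} The hypotheses give only $|\riem|\leq\alpha t^{-1}$, so Lemma~\ref{PROP distance distortion}, which assumes $\ric\geq-K$, is not available. Only its left inequality $d_{g(0)}\leq d_{g(t)}+\gamma_n\sqrt{\alpha t}$ survives, because it uses only $\ric\leq(n-1)\alpha/t$. The reverse comparison genuinely fails. For $g(0)=\lambda^2g_{\mathbb{H}^n}$ the flow is $g(t)=(\lambda^2+2(n-1)t)g_{\mathbb{H}^n}$. Then $|\riem(g(t))|\leq C_nt^{-1}$ uniformly in $\lambda$, while $d_{g(t)}=\mu\,d_{g(0)}$ with $\mu=\sqrt{1+2(n-1)t\lambda^{-2}}$. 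For its bounds to depend only on $n,\alpha$, your cutoff $\psi(d_{g(t)}(x_0,\cdot)+C\sqrt{\alpha t})$ must have a fixed profile. So for fixed $t$ and small $\lambda$ it vanishes on most of $B_{g(0)}(x_0,1)$, and the maximum principle only gives the estimate on about $B_{g(t)}(x_0,1)=B_{g(0)}(x_0,\mu^{-1})$.

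The repair is to centre the cutoff at each point. For $x_1\in B_{g(0)}(x_0,1)$ use $\psi_{x_1}=\phi\big(d_{g(t)}(x_1,\cdot)+C\sqrt{\alpha t}\big)$, with $\phi\equiv1$ on $[0,1]$ and $\phi\equiv0$ on $[2,\infty)$. Then:
\begin{itemize}
\item $\psi_{x_1}(x_1,t)=1$ once $C\sqrt{\alpha t}\leq1$;
\item the one-sided estimate, with $C\geq\gamma_n$, puts the support inside $B_{g(0)}(x_1,2)\subset B_{g(0)}(x_0,3)$;
\item you read off the bound only at $x_1$.
\end{itemize}

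\textbf{A second, smaller gap is in the induction.} The product trick needs $t^{k-1}|D^{k-1}dF|^2\leq L_{k-1}$ on the whole support of the cutoff. The lower-order terms likewise need $|D^\ell dF|^2\leq L_\ell t^{-\ell}$ there for all $\ell<k$. The order-$\ell$ statement only gives these on $B_{g(0)}(x_0,1)$. Since the constants are scale-free, apply the order-$\ell$ case with radius $1/4$, centred at each $y\in B_{g(0)}(x_0,3)$, using $B_{g(0)}(y,1)\subset B_{g(0)}(x_0,4)$. This gives the bounds on $B_{g(0)}(x_0,3)\times(0,\min\{T,S_\ell/16\}]$. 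So you should take $S_k\leq\min_{\ell<k}S_\ell/16$, in addition to the smallness needed for the cutoff.

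You also dropped the term $-2g^{ij}\ric(\twiddle{g})(\del_iF,\del_jF)$ from $\heatop|dF|^2$, which comes from the evolving target metric. It is bounded by $C\alpha\Lambda_0t^{-1}$, so nothing changes. With these repairs your argument proves the proposition.
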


\begin{lemma}[Lee, {\cite[Lemma 2.2, second part]{MCLeePaper}}]\label{PROP Lee HMHF evo eqn for derivs}
    Suppose $(\mfd^n, g(t))$, $(\twiddle{\mfd}^m, \twiddle{g}(t))$, $t \in [0, T]$ are two smooth solutions to the Ricci flow.
    
    If $F:B_{g(0)}(x_0,4r)\times[0,T] \rightarrow \twiddle{\mfd}$ is a smooth solution to \eqref{EQN HMHF}, then for all $k\in\nats$, we have
    \begin{align*}
        \left(\del_t-\lap_{g(t)}\right)|D^kdF_t|^2 \leq 
        &-2|D^{k+1}dF_t|^2 + C(n,m,k)|D^kdF_t|\Bigg[\sum_{\ell=0}^k|\nabla^\ell\riem(g(t))||D^{k-\ell}dF_t| \\
        &+ \sum_{\ell=0}^k\sum_{p=0}^\ell\sum_{i+j=p}|\twiddle{\nabla}^{\ell-p}\riem(\twiddle{g}(t))||D^idF_t||D^jdF_t||D^{k-\ell}dF_t|\\
        &+\sum_{\ell=0}^k\sum_{p=0}^\ell|\twiddle{\nabla}^{\ell-p}\riem(\twiddle{g}(t))||D^pdF_t||D^{k-\ell}dF_t| + |\riem(\twiddle{g}(t))||D^kdF_t| \Bigg].
    \end{align*}
\end{lemma}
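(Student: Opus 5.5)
The plan is a direct computation: derive the evolution equation of $D^kdF_t$, viewed as a section of $T^*\mfd^{\otimes(k+1)}\otimes F_t^*T\twiddle{\mfd}$, and then take the inner product with $D^kdF_t$. We use the $*$-notation for contractions with universal coefficients depending only on $n,m,k$. Since no curvature bounds are assumed, only the algebraic structure of the terms matters, so this is a bookkeeping argument.

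\emph{Base case $k=0$.} We start from the Bochner formula for \eqref{EQN HMHF}. Write $\del_tF=\tau(F)=\operatorname{tr}_gD\,dF$ and commute $D$ with the trace and with $\del_t$. This gives
\[
\left(\del_t-\lap_{g(t)}\right)dF = \riem(g)*dF + \riem(\twiddle{g})*dF*dF*dF + \mathcal{E},
\]
where $\mathcal{E}$ collects the terms produced by the time dependence of the two metrics. These terms come from three sources.
\begin{itemize}
\item The Laplacian of the domain changes with $g(t)$. Using $\del_t\Gamma=\nabla\ric$ together with the contracted Bianchi identity, these contributions cancel or reduce to $\riem(g)*dF$.
\item The pulled-back target connection changes with $\twiddle{g}(t)$. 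Since $\del_t\twiddle{\Gamma}=\twiddle{\nabla}\twiddle{\ric}$, this contributes $\twiddle{\nabla}\riem(\twiddle{g})*dF*dF$.
\item The norm on $F^*T\twiddle{\mfd}$ evolves by $-2\twiddle{\ric}$.
\end{itemize}
Next we compute $\del_t|dF|^2$, including the $\ric(g)$ and $\ric(\twiddle{g})$ terms coming from the evolving metrics in the norm, and use $\lap|dF|^2=2|D\,dF|^2+2\langle dF,\lap dF\rangle$. The good term $-2|D\,dF|^2$ appears, and the remaining terms are of the stated shapes with $k=0$.

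\emph{Inductive step.} Apply $D$ to the evolution equation for $D^{k-1}dF$. We must commute $D$ with $\del_t$ and with $\lap$, and each commutator has a controlled form.
\begin{itemize}
\item Commuting $D$ with $\del_t$ gives $\nabla\ric(g)*D^{k-1}dF$ from the domain connection, and $F^*(\twiddle{\nabla}\twiddle{\ric})*dF*D^{k-1}dF$ from the pulled-back target connection.
\item Commuting $D$ with $\lap$ gives $\nabla^{\ell}\riem(g)*D^{k-\ell}dF$ from the domain, and $F^*\riem(\twiddle{g})*dF*dF*D^{k}dF$ together with its derivatives from the target.
\end{itemize}
Differentiating the pulled-back curvature $F^*\twiddle{\nabla}^q\riem(\twiddle{g})$ uses the chain rule. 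Each derivative either lands on the curvature, producing $\twiddle{\nabla}^{q+1}\riem(\twiddle{g})*dF$, or lands on an existing factor $D^idF$. By induction, $D^p$ applied to $F^*\riem(\twiddle{g})*dF*dF$ is a sum of terms
\[
\twiddle{\nabla}^{\ell-p}\riem(\twiddle{g})*D^idF*D^jdF, \qquad i+j=p,
\]
with additional $dF$ factors absorbed into the contraction structure. This reproduces exactly the double and triple sums in the statement. The extra lone term $|\riem(\twiddle{g})||D^kdF|$, and the single sum with $|D^pdF||D^{k-\ell}dF|$, come from the $\del_t\twiddle{\Gamma}$ terms and from the evolving fibre metric.

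\emph{Conclusion.} Finally we write
\[
\left(\del_t-\lap\right)|D^kdF|^2 = -2|D^{k+1}dF|^2 + 2\langle D^kdF,(\del_t-\lap)D^kdF\rangle + (\text{metric-variation terms}).
\]
The metric-variation terms are $\ric*D^kdF*D^kdF$ with $\ric$ coming from either $g$ or $\twiddle{g}$, and they are absorbed into the $\ell=0$ terms. Cauchy--Schwarz then gives the inequality with $C=C(n,m,k)$.

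The main obstacle is the combinatorial bookkeeping: we must verify that every term produced by the chain rule and by the time-derivative commutators fits one of the four listed families, with the correct total derivative count. In particular, the $\del_t\twiddle{\Gamma}=\twiddle{\nabla}\twiddle{\ric}$ terms must be correctly placed in the single sum $\sum_{\ell,p}|\twiddle{\nabla}^{\ell-p}\riem(\twiddle{g})||D^pdF||D^{k-\ell}dF|$. To keep this manageable, we would state the inductive hypothesis as a structural formula for $(\del_t-\lap)D^kdF$ itself, rather than for its norm.
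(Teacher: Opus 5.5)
The paper gives no proof of this lemma; it quotes Lee. Computing $(D_t-\lap)D^kdF$ and pairing with $D^kdF$ is the natural route, but there is a genuine gap.

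\textbf{The step that fails.} You claim the extra $dF$ factors produced by the chain rule can be ``absorbed into the contraction structure''. The $*$-notation hides only universal constants and metric contractions. It cannot hide factors of $|dF|$: nothing in the hypotheses bounds $|dF|$, and the constant must be $C(n,m,k)$. Since $D\big(F^*\twiddle{\nabla}^q\riem(\twiddle{g})\big)=F^*\twiddle{\nabla}^{q+1}\riem(\twiddle{g})*dF$, the term $D^k\big(F^*\riem(\twiddle{g})*dF*dF*dF\big)$ is a sum of terms
$F^*\twiddle{\nabla}^s\riem(\twiddle{g})*D^{a_1}dF*\cdots*D^{a_s}dF*D^{b_1}dF*D^{b_2}dF*D^{b_3}dF$ with $\sum_r(a_r+1)+b_1+b_2+b_3=k$. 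Such a term has $s+3$ factors of the form $D^{\bullet}dF$, whereas every term of the second sum has exactly three. So your induction does not ``reproduce exactly'' the stated sums.

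\textbf{This is not a bookkeeping issue.} Take $k=1$. The part of $(D_t-\lap)D_iF_j$ that is quartic in $dF$ comes from two places: $D_i$ landing on the pulled-back curvature in $D_i\big(g^{ab}\twiddle{R}(F_j,F_b)F_a\big)$, and the term $g^{ab}D_a\big(\twiddle{R}(F_i,F_b)F_j\big)$ inside $[D_i,\lap]F_j$. Here $\twiddle{R}$ is the curvature operator of $\twiddle{g}$.

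Work at a point where $n=m$ and $dF=\lambda\iota$ with $\iota$ a linear isometry. Set $\twiddle{A}_{ijl}:=(\twiddle{\nabla}_{e_i}\ric(\twiddle{g}))(e_j,e_l)$, with frames identified via $\iota$. The contracted second Bianchi identity combines the two quartic contributions into $\lambda^4 S_{ijl}$, where, up to sign conventions, $S_{ijl}=\twiddle{A}_{ijl}+\twiddle{A}_{jil}-\twiddle{A}_{lij}$. The tensor $S$ is symmetric in $i,j$, and its $(j,l)$-trace is the gradient of the scalar curvature of $\twiddle{g}$, so $S\neq0$ in general.

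Now prescribe the initial $2$-jet of the map at that point so that $D\,dF=\pm S$. The terms $-2|D^2dF|^2$ appear on both sides and cancel. The left-hand side then contains $2\lambda^4|S|^2$, while every term on the right-hand side is $O(\lambda^3)$; the largest is $|D\,dF|\,|\twiddle{\nabla}\riem(\twiddle{g})|\,|dF|^3$. So the inequality with $C=C(n,m,k)$, read literally, fails for large $\lambda$.

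\textbf{What your computation does prove.} It gives the inequality with the chain-rule factors kept: general products of $F^*\twiddle{\nabla}^s\riem(\twiddle{g})$ with factors $D^{a}dF$, of the correct total order, on the right-hand side. This version is all the paper needs where it uses the lemma, in Proposition \ref{PROP Global HMHF existence}. There $|dF^R_t|^2\le10\Lambda_0$ and the lower-order derivatives are bounded by the induction, so every such term is absorbed into $C(|D^kdF^R_t|^2+1)$. You should state and prove that version instead.

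\textbf{A smaller point on the base case.} At $k=0$ no $\del_t\twiddle{\Gamma}$ term arises, because $D_t\,dF=D(\del_tF)$ by torsion-freeness of the pullback connection. This matters: a term $\twiddle{\nabla}\riem(\twiddle{g})*dF*dF$ is not among the $k=0$ shapes, so as written your base case would not close. Such terms first enter at $k\ge1$, through $[D_t,D]$.
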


Next, we will need some variations of \cite[Corollary 3.1]{LeeTamMaxPrinc}, a localised maximum principle for Ricci flows. Note that when differential inequalities are said to hold \emph{in the barrier sense}, we will be using the same definition as \cite[Definition 18.5]{RFT&A}. In particular, given a (possibly nonsmooth) function $f$ and $C\in\reals$, we say that $\heatop f\leq C$ at $(x_0,t_0)$ if the following is true. For any $\eps>0$ there is a $C^2$ function $F$ with $F\leq f$ defined in a neighbourhood of $(x_0,t_0)$, and such that $F(x_0,t_0)=f(x_0,t_0)$, and $\heatop F\leq C+\eps$ at $(x_0,t_0)$.

First, we obtain the following variation of \cite[Corollary 3.1]{LeeTamMaxPrinc}, to allow for the case where we have a slightly weaker initial bound on $\ph$. This can be obtained by a minor adaptation of the proof, as outlined below. This will be improved in upcoming work \cite{SharpMaxPrinc}.

\begin{theorem}\label{PROP Lee Tam max princ improved}
    Let $(\mfd^n,g(t)),t\in[0, T]$ be a smooth solution to the Ricci flow which is possibly incomplete. Suppose
    \[
    \ric(g(t))\leq \alpha t^{-1}
    \]
    on $\mfd\times(0,T]$ for some $\alpha>0$. Let $\ph(x,t)$ be a continuous function on $\mfd\times[0,T]$ which satisfies $\ph(x,t)\leq\alpha t^{-b}$ for some $b\geq0$ on $\mfd\times(0, T]$, and
    \[
    \left(\del_t-\lap_{g(t)}\right)\ph\,\Big|_{(x_0,t_0)} \leq L(x_0,t_0)\ph(x_0,t_0)
    \]
    whenever $\ph(x_0,t_0)>0$ in the sense of barrier, for some continuous function $L(x, t)$ on $\mfd\times[0, T]$ with $L(x, t) \leq\alpha t^{-1}$. Let $p\in\mfd, r>0$ be such that $B_{g(0)}(p, r)\Subset\mfd$ and $\ph(x, 0)\leq0$ on $B_{g(0)}(p, r)$. Then for any $\ell>\alpha+b$, there exists $\widehat{T}=\widehat{T}(n,\alpha,\ell,b)>0$ such that for $t\in[0,\min\{T,\widehat{T}r^2\}]$,
    \[
    \ph(p,t) \leq r^{-2(\ell+b)}t^\ell.
    \]
\end{theorem}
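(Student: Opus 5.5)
The statement is Theorem \ref{PROP Lee Tam max princ improved}. It is a variant of \cite[Corollary 3.1]{LeeTamMaxPrinc}, whose only new feature is the weaker initial hypothesis $\ph\leq\alpha t^{-b}$ in place of a bound at $t=0$. The plan is to rerun the Lee--Tam argument: multiply $\ph$ by a time weight and a spatial cutoff, then apply a maximum principle on the compact set $\overline{B_{g(0)}(p,r)}\times[0,T]$.

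\textbf{Reduction.} By parabolic rescaling $g\mapsto r^{-2}g(r^2t)$, the hypotheses are invariant up to a factor $r^{2b}$ in the bound on $\ph$. So we may assume $r=1$, and the target becomes $\ph(p,t)\leq t^\ell$ for $t\leq\widehat{T}$.

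\textbf{Cutoff function.} Under $\ric\leq\alpha t^{-1}$, Perelman's / Simon--Topping's distance estimate gives a cutoff $\Phi(x,t)=\psi\big(d_{g(t)}(p,x)+c_n\sqrt{\alpha t}\big)$, with $\psi$ decreasing, equal to $1$ near $0$ and to $0$ beyond $1/2$. For $t\leq\widehat{T}$ this function is supported in $B_{g(0)}(p,1)$. It satisfies, in the barrier sense, $\heatop\Phi\leq C_n\Phi^{1/2}$ after replacing $\psi$ by a suitable power, with $|\nabla\Phi|^2\leq C\Phi$.

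\textbf{Auxiliary quantity.} Set $Q:=\Phi^m\,t^{-\ell}e^{-\Psi(t)}\,\ph$, possibly with a small additive correction. The factor $t^{-\ell}$ absorbs the reaction term: at a positive interior maximum we have $\del_t(t^{-\ell})\ph=-\ell t^{-\ell-1}\ph$, while the reaction term contributes at most $L\ph\leq\alpha t^{-1}\ph$. Since $\ell>\alpha$, this leaves a strictly negative term $-(\ell-\alpha)t^{-\ell-1}\ph$. That negative term must dominate the cutoff error terms $\Phi^{m-1}(|\lap\Phi|+|\nabla\Phi|^2/\Phi)\ph$, which are of order $\Phi^{m-1/2}\ph$. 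The Lee--Tam trick is to use a small-time argument: the bad terms carry only $t^{-\ell}$, whereas the good term carries $t^{-\ell-1}$. Hence for $t\leq\widehat{T}$ the good term wins unless $\Phi$ is tiny, i.e. unless $\Phi^{1/2}\ph$ is comparable to $t^{-1}\Phi\ph$. In that regime one compares $Q$ with $t^{\ell'}$ directly instead.

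\textbf{Main obstacle: behaviour as $t\to0$.} Knowing $\ph(\cdot,0)\leq0$ is not enough, because $Q$ involves $t^{-\ell}\ph$. This is exactly where $b$ and the condition $\ell>\alpha+b$ enter. I would first try to argue as follows. By continuity of $\ph$ on $\mfd\times[0,T]$ and compactness of $\overline{B_{g(0)}(p,1)}$, for every $\eta>0$ there is $s>0$ with $\ph\leq\eta$ on the ball for $t\leq s$. This handles continuity near $t=0$. Then I would run the maximum principle on the quantity $\Phi^m t^{-\ell}\ph - 1$ and show it cannot first reach $0$ at a positive time $t_0\leq\widehat{T}$; at $t=0$ it is negative via the continuity argument. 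If this fails because $t^{-\ell}\ph$ might blow up, I would instead apply the argument with exponent $\ell$ replaced by $\ell-b$ applied to $t^{b}\ph$. This version uses only $t^b\ph\leq\alpha$ and inherits the reaction bound $(L+bt^{-1})$, hence needs $\ell-b>\alpha$. I expect this bookkeeping — matching exponents so that $\ell>\alpha+b$ suffices and the result is $\ph(p,t)\leq t^\ell$ — to be the delicate step.

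Finally, undo the rescaling to obtain $\ph(p,t)\leq r^{-2(\ell+b)}t^\ell$ for $t\in[0,\min\{T,\widehat{T}r^2\}]$.
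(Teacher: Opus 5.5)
Your outline (rescale to unit size, a Perelman-type cutoff $\Phi$ raised to a power $m$, the weight $t^{\ell}$, a first-touching argument for small times) is the Lee--Tam skeleton that the paper reuses. However, the two steps that carry the content are not carried out, and you have placed the hypothesis $\ph\le\alpha t^{-b}$ in the wrong step.

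\textbf{The small-$\Phi$ regime.} At a maximum of $u:=\Phi^m\ph$, the cutoff error is $2m^2|\nabla\Phi|^2\Phi^{-2}u$ (or $\lesssim\Phi^{-1}u$ with your normalisation). This is unbounded as $\Phi\to0$, so saying that one ``compares $Q$ with $t^{\ell'}$ directly'' in that regime leaves the crux open. This is exactly the step the paper modifies, and it is where $b$ enters. At the first time $t_0$ at which $u$ touches $\eta=\half t^\ell$, the hypothesis gives $\Phi^m=\eta/\ph\ge t_0^{\ell+b}/(2\alpha)$. Hence $\Phi^{-2}\le(2\alpha t_0^{-\ell-b})^{2/m}$, and, up to harmless constants,
\[
\ell t_0^{\ell-1}\le t_0^{\ell}\Big(\frac{\alpha}{t_0}+C_1^2m^2\Big(\frac{\alpha}{t_0^{\ell+b}}\Big)^{2/m}\Big).
\]
Choosing $m$ with $\frac{2}{m}(\ell+b)<\half$ forces $t_0\ge t_2(n,\alpha,\ell,b)$, which is why $\widehat T$ depends on $b$.

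\textbf{The behaviour as $t\to0$.} Your treatment of this step does not work. Continuity of $\ph$ and $\ph(\cdot,0)\le0$ give $\ph\le\eps$ for small $t$, but no control on $t^{-\ell}\ph$. Switching to $t^b\ph$ changes nothing: it also tends to $0$ at an unknown rate. Moreover, a bound $t^b\ph\le t^{\ell-b}$ would only give $\ph\le t^{\ell-2b}$, and would require $\ell-b>\alpha+b$.

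More fundamentally, the information you retain cannot suffice: $L\le\alpha t^{-1}$, $\ph\le\alpha t^{-b}$ and $\ph(\cdot,0)\le0$. On static flat $\reals^n$, the function $\ph=ct^{\alpha}$ with $L=\alpha t^{-1}$ and $c$ small satisfies all of these. Yet it violates $\ph(p,t)\le r^{-2(\ell+b)}t^\ell$ for small $t$, because $\ell>\alpha$.

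What rules this out is the continuity of $L$ on $\mfd\times[0,T]$, i.e.\ a non-quantitative bound $L\le C_0$ near $t=0$. Your fallback with coefficient $L+bt^{-1}$ discards exactly this. With it, one can for instance use $\Phi^{-2}u=\ph^{2/m}u^{1-2/m}$ and compare $u$ with solutions of the resulting sublinear ODE started from $\eps>0$. This gives $u\lesssim t^{m/2-b}$ near $t=0$. Once $m/2-b>\ell$, it follows that $u<\half t^\ell$ for small $t>0$, so the first touching time is positive.

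In the paper this start-up is inherited unchanged from Lee--Tam, and only the final inequality above is altered.
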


\begin{proof}
    We first consider the case $r=2$, and proceed with an almost identical proof to that of \cite[Theorem 1.1]{LeeTamMaxPrinc}. The proof is entirely unchanged until the point where we take $\eta=\half t^\ell$ (near the end of the proof; see \cite[\S3]{LeeTamMaxPrinc} for more details). Then with the bound $\ph\leq\alpha t^{-b}$, the subsequent line becomes
    \[
    \ell t_0^{\ell-1}\leq t_0^\ell\bigg(\frac{\alpha}{t_0}+C_1^2m^2\left(\frac{\alpha}{t_0^{\ell+b}}\right)^\frac{2}{m}\bigg),
    \]
    which implies
    \[
    t_0^{\ell-1}\leq C_1^2m^2\alpha^\frac{2}{m}t_0^{\ell-\frac{2}{m}(l+b)}.
    \]
    Now choosing $m$ large enough that $\frac{2}{m}(\ell+b)<\half$ gives that $t_0\geq t_2(n,\alpha,\ell,b)$. Concluding as in the proof of \cite[Theorem 1.1]{LeeTamMaxPrinc} gives the result in the case $r=2$.

    To obtain the result for all $r$, we rescale as in the proof of \cite[Corollary 3.1]{LeeTamMaxPrinc}. Taking $\lambda=4r^{-2}$, we define $g_1(x,t)=\lambda g(x,\lambda^{-1}t)$, $\ph_1(x,t)=\lambda^{-b}\ph(x,\lambda^{-1}t)$, $L_1(x,t)=\lambda^{-1}L(x,\lambda^{-1}t)$. Then $g_1, \ph_1, L_1$ satisfy the conditions needed to apply the result in the case $r=2$.
\end{proof}

Next, we will need another version of \cite[Corollary 3.1]{LeeTamMaxPrinc} which works for a ``twisted" heat operator. To this end, we first establish the following adaptation of \cite[Lemma 8.3a]{PerelmanDistanceEvolution}.

\begin{lemma}\label{PROP twisted dist evo}
    Let $(\mfd,g(t))_{t\in[0,T]}$ be a Ricci flow. Suppose that at some time $t_0$, we have that
    $$
    |\riem(g(t_0))| \leq K \hspace{3ex}\text{in}\hspace{3ex} B_{g(t_0)}(p,r),
    $$
    for some number $K>0$, and some point $p$ and radius $r>0$ such that $B_{g(t_0)}(p,r)\Subset\mfd$. Let $0<r_0\leq\half r$.
    Finally, let $\widehat{g}$ be a Riemannian metric defined on $B_{g(t_0)}(p,r)$, which satisfies $(1-\eps)g(t_0)\leq\widehat{g}\leq(1+\eps)g(t_0)$ for some $0<\eps<1$. Then there is some $C_\eps>0$, depending only on $\eps$, such that $d(x,t):=\operatorname{dist}_{g(t)}(p,x)$ satisfies
    \[
    \left(\del_t-\widehat{g}^{ij}\nabla_i\nabla_j\right) d 
    \geq -C_\eps(n-1)\left(Kr_0 + r_0^{-1}\right)
    \hspace{3ex}\text{in } B_{g(t_0)}(p,r)\setminus B_{g(t_0)}(p,2r_0)
    \]
    in the barrier sense, where $\nabla$ is defined with respect to $g(t_0)$.
\end{lemma}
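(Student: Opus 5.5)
The plan is to follow Perelman's proof of \cite[Lemma 8.3a]{PerelmanDistanceEvolution}: compute $\del_t d$ along a minimising geodesic, bound the twisted second-order term $\widehat{g}^{ij}\nabla_i\nabla_jd$ from above by second variation, and arrange the variation fields so that the curvature terms nearly cancel against $\del_td$.

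\textbf{Setup and reduction to smooth points.} Fix $x\in B_{g(t_0)}(p,r)\setminus B_{g(t_0)}(p,2r_0)$ and a unit-speed minimising $g(t_0)$-geodesic $\gamma:[0,L]\to\mfd$ from $p$ to $x$, where $L=d(x,t_0)\geq 2r_0$. Since $x\in B_{g(t_0)}(p,r)$, $\gamma$ lies in $B_{g(t_0)}(p,r)$, so $|\riem|\leq K$ along all of $\gamma$. If $x$ is a cut point, I will use the usual Calabi-type argument. The quantity $\int_\gamma$ of (length of a variation of $\gamma$ with fixed endpoints) is a smooth upper barrier for $d$ that touches at $x$. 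Each estimate below is really an estimate on this barrier, and the first-variation identity $\del_tL_{g(t)}(\gamma)=-\int_\gamma\ric(\gamma',\gamma')$ holds for the barrier too. So it suffices to argue as if $d$ were smooth at $x$ and give the barrier version verbatim; this yields the inequality in the barrier sense.

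\textbf{Time derivative and second variation.} First, $\del_td=-\int_0^L\ric(\gamma',\gamma')\,ds$. Next write $A:=\widehat{g}^{-1}$, viewed as a $g(t_0)$-self-adjoint endomorphism with eigenvalues in $[(1+\eps)^{-1},(1-\eps)^{-1}]$. Since $\nabla^2d(\gamma',\cdot)=0$, we have $\widehat{g}^{ij}\nabla_i\nabla_jd=\operatorname{tr}(PAP\,\nabla^2d)$, where $P$ is orthogonal projection onto $\gamma'(L)^\perp$. Set $B:=PAP$ on $\gamma'(L)^\perp$; it is positive definite with eigenvalues between $(1+\eps)^{-1}$ and $(1-\eps)^{-1}$.

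Let $\{e_i\}_{i=1}^{n-1}$ be a parallel orthonormal frame of $\gamma'^\perp$ along $\gamma$, and let $B(s)$ be the parallel transport of $B$. Choose a cutoff $\chi$ with $\chi\equiv0$ on $[0,L-r_0]$, $\chi(L)=1$ and $|\chi'|\leq 2/r_0$, and define
\[
Q(s):=\big((1-\chi(s))I+\chi(s)B(s)\big)^{\half}.
\]
Choose also $\phi(s)=\min\{s/r_0,1\}$, and take variation fields $W_i(s):=\phi(s)Q(s)e_i(s)$. These satisfy $W_i(0)=0$ and $\sum_iW_i(L)\otimes W_i(L)=B$. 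The second variation formula then gives
\[
\operatorname{tr}(B\nabla^2d)\leq\sum_i\int_0^L\Big(|W_i'|^2-\riem(W_i,\gamma',\gamma',W_i)\Big)ds .
\]
The cutoff regions $[0,r_0]$ and $[L-r_0,L]$ are disjoint because $L\geq 2r_0$; this is exactly where the restriction $x\notin B_{g(t_0)}(p,2r_0)$ is used.

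\textbf{Estimating the difference.} The spectrum of $(1-\chi)I+\chi B$ lies in a compact subset of $(0,\infty)$ depending only on $\eps$. Hence $|Q|,|Q^{-1}|\leq C_\eps$ and $|Q'|\leq C_\eps r_0^{-1}$, so $\sum_i\int|W_i'|^2\leq C_\eps(n-1)r_0^{-1}$. For the curvature terms, on $[r_0,L-r_0]$ we have $\sum_iW_i\otimes W_i=P$, so there $\sum_i\riem(W_i,\gamma',\gamma',W_i)=\ric(\gamma',\gamma')$ exactly. On the two end intervals, each of length $r_0$, both this sum and $\ric(\gamma',\gamma')$ are bounded by $C_\eps(n-1)K$. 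Subtracting the second-variation bound from $\del_td$, all curvature contributions cancel except on the end intervals. The result is
\[
\left(\del_t-\widehat{g}^{ij}\nabla_i\nabla_j\right)d\geq-C_\eps(n-1)\left(Kr_0+r_0^{-1}\right).
\]

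\textbf{Main obstacle.} The main difficulty is that $\widehat{g}^{ij}\nabla_i\nabla_jd$ is not a $g(t_0)$-trace. A naive choice of variation fields, $\phi\cdot$(parallel transport of $B^{1/2}$), would produce $\int\phi^2\operatorname{tr}(B\riem(\cdot,\gamma',\gamma',\cdot))$. This does not cancel against $\ric(\gamma',\gamma')$ and leaves an error of order $KL$, which is not bounded by the right-hand side. The interpolation $Q$, which is the identity away from $x$ and equals $B^{1/2}$ only in an $r_0$-neighbourhood of $x$, is what keeps this error localised. I expect checking that the derivative of the matrix square root is controlled by $C_\eps r_0^{-1}$ to be the only real computation.
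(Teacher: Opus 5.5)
Your proposal is correct and is essentially the paper's argument. Both use a Perelman-style second variation in which the variation fields equal a parallel orthonormal frame on the middle of $\gamma$, so the curvature terms cancel $\partial_t d$ exactly; the fields are ramped up linearly on $[0,r_0]$ and interpolated to a $\widehat{g}^{-1/2}$-type scaling on the final interval of length $r_0$.

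The differences are cosmetic. The paper diagonalises $\widehat{g}$ on all of $T_{x_0}\mfd$ and handles non-orthogonality to $\gamma'$ through the components $\widehat{e}_i$, whereas you first compress $\widehat{g}^{-1}$ to $\gamma'^\perp$. Also, because $B(s)$ is parallel, your $Q$ is diagonal in the transported eigenframe of $B$. The matrix square-root derivative bound you flagged as the main computation is therefore the same one-line scalar estimate as for the paper's $f_i$.
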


\begin{proof}
    We follow an almost identical proof to that of \cite[Lemma 8.3a]{PerelmanDistanceEvolution}, but we give the full details for convenience. First, let $t_0\in[0,T]$ and $x_0\in B_{g(t_0)}(p,r)\setminus B_{g(t_0)}(p,2r_0)$, and let $\gamma$ be the shortest geodesic from $p$ to $x_0$, parametrised by arc length. Next, at the point $x_0$, take an orthonormal basis $\{e_i\}_{i=1}^n$ with respect to $g(t_0)$ such that $\widehat{g}$ is diagonal, that is $\widehat{g}_{ij}=\lambda_i\delta_{ij}$ for some $\lambda_1,\ldots,\lambda_n\in(0,2)$. Then move this along $\gamma$ by parallel transport with respect to $g(t_0)$, to obtain an orthonormal basis $\{e_i(s)\}_{i=1}^n$ along $\gamma$.

    For fixed $i$, define the vector field $V_i(s)$ along $\gamma$ by $V_i(s)=f_i(s)e_i(s)$, where
    \[
    f_i(s)=
    \begin{cases}
        \frac{s}{r_0} & 0\leq s<r_0 \\
        1 & r_0\leq s<d_0-r_0 \\
        1+(\lambda_i^{-1/2}-1)\frac{s+r_0-d_0}{r_0} & d_0-r_0\leq s\leq d_0,
    \end{cases}
    \]
    writing $d_0:=d_{g(t_0)}(x_0,t_0)$. This variation field gives rise to an upper barrier for $d$ at $(x_0,t_0)$, so its second variation $s_{V_i}''(\gamma)$ satisfies
    \[
    \lambda_i^{-1}\nabla_i\nabla_id\leq s_{V_i}''(\gamma).
    \]
    The barrier is constructed in such a way that the endpoint contributions to the second variation formula are $0$. Therefore, writing $e_i=e_i(s)$, $g=g(t_0)$, $\riem=\riem(g(t_0))$, $\mu_i:=\lambda_i^{-1/2}$ for simplicity, and writing $\widehat{e}_i:=e_i-g(e_i,\gamma')\gamma'$ for the part of $e_i$ perpendicular to $\gamma'$, the second variation formula gives
    \begin{align*}
        \lambda_i^{-1}\nabla_i&\nabla_id \\
        &\leq \int_0^{r_0} \left( \frac{1}{r_0^2}g(\widehat{e}_i,\widehat{e}_i) - \frac{s^2}{r_0^2}\riem(\widehat{e}_i,\gamma',\widehat{e}_i,\gamma') \right) \,\mathrm{d}s 
        + \int_{r_0}^{d_0-r_0} - \riem(\widehat{e}_i,\gamma',\widehat{e}_i,\gamma') \,\mathrm{d}s \\
        &\hspace{3ex}+ \int_{d_0-r_0}^{d_0} \left( \frac{(\mu_i-1)^2}{r_0^2}g(\widehat{e}_i,\widehat{e}_i) - \bigg(1+(\mu_i-1)\frac{(s+r_0-d_0)}{r_0}\right)^2\riem(\widehat{e}_i,\gamma',\widehat{e}_i,\gamma') \bigg) \,\mathrm{d}s  \\
        &= - \int_0^{d_0} \riem(e_i,\gamma',e_i,\gamma') \,\mathrm{d}s
        + \frac{1}{r_0}\left(1+(\mu_i-1)^2\right)\left(1-g(e_i,\gamma')^2\right) \\
        &\hspace{3ex}+ \int_0^{r_0} \left( 1 - \frac{s^2}{r_0^2}\right)\riem(\widehat{e}_i,\gamma',\widehat{e}_i,\gamma')  \,\mathrm{d}s \\
        &\hspace{3ex}+ \int_{d_0-r_0}^{d_0} -\left( 2(\mu_i-1)\frac{(s+r_0-d_0)}{r_0} + (\mu_i-1)^2\frac{(s+r_0-d_0)^2}{r_0^2}\right)\riem(\widehat{e}_i,\gamma',\widehat{e}_i,\gamma') \,\mathrm{d}s \\
        &\leq - \int_0^{d_0} \riem(e_i,\gamma',e_i,\gamma') \,\mathrm{d}s
        + C_\eps\left(r_0^{-1}+Kr_0\right)\left(1-g(e_i,\gamma')^2\right).
    \end{align*}
    
    Now summing over $i$, and noting that $\del_td=-\int_\gamma\ric(\gamma',\gamma')$, gives the result.
\end{proof}

Using this, we may generalise Theorem \ref{PROP Lee Tam max princ improved} to the case of this operator.

\begin{theorem}\label{PROP Lee Tam max princ improved twisted}
    Let $(\mfd^n,g(t)),t\in[0, T]$ be a smooth solution to the Ricci flow which is possibly incomplete. Suppose
    \[
    |\riem(g(t))|\leq \alpha t^{-1}
    \]
    on $\mfd\times(0,T]$ for some $\alpha>0$, and let $(\widehat{g}(t))_{t\in[0,T]}$ be a family of Riemannian metrics defined on $B_{g(t_0)}(p,r)$, which satisfies $(1-\eps)g(t) \leq\widehat{g}(t) \leq(1+\eps)g(t)$ for all $t$, for some $0<\eps<1$. Let $\ph(x,t)$ be a continuous function on $\mfd\times[0,T]$ which satisfies $\ph(x,t)\leq\alpha t^{-b}$ for some $b\geq0$ on $\mfd\times(0, T]$, and
    \[
    \left(\del_t-\widehat{g}^{ij}\nabla_i\nabla_j\right)\ph\,\Big|_{(x_0,t_0)} \leq L(x_0,t_0)\ph(x_0,t_0)
    \]
    whenever $\ph(x_0,t_0)>0$ in the sense of barrier, for some continuous function $L(x, t)$ on $\mfd\times[0, T]$ with $L(x, t) \leq\alpha t^{-1}$. Here $\nabla$ is always defined with respect to $g(t)$. Let $p\in\mfd, r>0$ be such that $B_{g(0)}(p, r)\Subset\mfd$ and $\ph(x, 0)\leq0$ on $B_{g(0)}(p, r)$. Then for any $\ell>\alpha+b$, there exists $\widehat{T}(n,\eps,\alpha,\ell,b)>0$ such that for $t\in[0,\min\{T,\widehat{T}r^2\}]$,
    \[
    \ph(p,t) \leq r^{-2(\ell+b)}t^\ell.
    \]
\end{theorem}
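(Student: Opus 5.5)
We follow the proof of Theorem \ref{PROP Lee Tam max princ improved}, which is itself a minor adaptation of \cite[Theorem 1.1, Corollary 3.1]{LeeTamMaxPrinc}. The only ingredient of that argument which depends on the operator being the Laplacian is the behaviour of the cutoff function built from the distance function. We therefore plan to replace Perelman's distance estimate with Lemma \ref{PROP twisted dist evo}, and to check that every other step goes through with $\lap_{g(t)}$ replaced by $\widehat{g}^{ij}\nabla_i\nabla_j$.

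First we reduce to the case $r=2$ by parabolic rescaling. As in the proof of Theorem \ref{PROP Lee Tam max princ improved}, set $\lambda=4r^{-2}$, $g_1(x,t)=\lambda g(x,\lambda^{-1}t)$, $\widehat{g}_1(t)=\lambda\widehat{g}(\lambda^{-1}t)$, $\ph_1=\lambda^{-b}\ph(\cdot,\lambda^{-1}t)$ and $L_1=\lambda^{-1}L(\cdot,\lambda^{-1}t)$. The operator $\del_t-\widehat{g}^{ij}\nabla_i\nabla_j$ scales exactly like $\del_t-\lap$, and the sandwich $(1-\eps)g\leq\widehat{g}\leq(1+\eps)g$ is scale invariant. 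Hence all hypotheses are preserved with the same $\alpha,b,\eps$.

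In the case $r=2$ we construct the Lee--Tam test function. Take a cutoff $\Phi(x,t)=\phi\big(d_{g(t)}(p,x)+c\sqrt{t}\big)^m$, where $\phi$ is a fixed nonincreasing cutoff on $[0,\infty)$ equal to $1$ near $0$ and vanishing beyond $1$, and $m$ is large. We need $(\del_t-\widehat{g}^{ij}\nabla_i\nabla_j)$ applied to $d+c\sqrt t$ to be nonnegative in the barrier sense wherever $\phi'\neq0$.

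This is where Lemma \ref{PROP twisted dist evo} enters. We apply it at each time $t_0$, using $|\riem(g(t_0))|\leq\alpha t_0^{-1}=:K$ and choosing $r_0=\sqrt{t_0}$, which is at most $\frac12$ for $t_0\leq\widehat{T}$ small. It gives
\[
(\del_t-\widehat{g}^{ij}\nabla_i\nabla_j)d\geq -C_\eps(n-1)(\alpha+1)t_0^{-1/2}
\]
away from $B_{g(t_0)}(p,2\sqrt{t_0})$. This is the same $t^{-1/2}$ rate as in Perelman's lemma, so choosing $c=c(n,\eps,\alpha)$ large absorbs it. Near $p$, within distance $2\sqrt{t}$, the cutoff is constant once $\widehat{T}$ is small, so nothing needs to be checked there.

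The remaining gradient terms are handled as in Lee--Tam. For example, $\widehat{g}^{ij}\nabla_i\Phi\nabla_j\Phi$ is comparable to $|\nabla\Phi|^2$ with constants depending on $\eps$, so the usual estimate $|\nabla\Phi^{1/m}|^2\lesssim\Phi^{1-2/m}$ carries over with $\eps$-dependent constants.

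We then run the maximum principle argument on $\Phi\ph-\eta$ with $\eta=\frac12 t^\ell$. At a first positive maximum we have $\nabla(\Phi\ph)=0$ and $\widehat{g}^{ij}\nabla_i\nabla_j(\Phi\ph)\leq0$, since $\widehat{g}$ is positive definite. The twisted operator therefore yields the same inequality as in the proof of Theorem \ref{PROP Lee Tam max princ improved}:
\[
\ell t_0^{\ell-1}\leq t_0^\ell\big(\alpha t_0^{-1}+C_1^2m^2(\alpha t_0^{-\ell-b})^{2/m}\big),
\]
now with $C_1=C_1(n,\eps,\alpha)$. Because $\ell>\alpha+b$ and $m$ is large, this forces $t_0\geq\widehat{T}(n,\eps,\alpha,\ell,b)$, which gives a contradiction for small times and hence the conclusion.

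The main obstacle is the cutoff step, namely verifying that Lemma \ref{PROP twisted dist evo} can be applied uniformly in time with $r_0\sim\sqrt{t}$. One must also check that the barrier-sense inequality for $d$ combines correctly with the barrier-sense hypothesis on $\ph$ at points where $d$ is not smooth. We plan to handle the latter, as in \cite{LeeTamMaxPrinc}, by using Calabi's trick with the upper barrier constructed in the proof of Lemma \ref{PROP twisted dist evo}.
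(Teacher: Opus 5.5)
Your proposal is correct and follows essentially the same route as the paper. You rescale to $r=2$, replace Perelman's distance estimate by Lemma~\ref{PROP twisted dist evo} to build a radial cutoff that is a subsolution for $\del_t-\widehat{g}^{ij}\nabla_i\nabla_j$, and then run the Lee--Tam argument with the modified final inequality from the proof of Theorem~\ref{PROP Lee Tam max princ improved}; your choice $r_0=\sqrt{t}$ instead of the paper's $r_0=\sqrt{t/\alpha}$ only changes constants.
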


\begin{proof}
    The proof is again almost identical to that of \cite[Theorem 1.1]{LeeTamMaxPrinc}. We begin by replacing the use of \cite[Lemma 8.3a]{PerelmanDistanceEvolution} with Lemma \ref{PROP twisted dist evo}, to see that in our case,
    \[
    \left(\del_t-\widehat{g}^{ij}\nabla_i\nabla_j\right) d 
    \geq -C_\eps(n-1)\sqrt{\frac{\alpha}{t}}
    \]
    outside $B_{g(t)}(p,2\sqrt{\alpha^{-1}t})$, for some $C_\eps>0$ depending only on $\eps$. Hence we may use this to define a radial cutoff function $\Phi_r$ with the property
    \[
    \left(\del_t-\widehat{g}^{ij}\nabla_i\nabla_j\right) \Phi_r \leq0,
    \]
    and then calculate
    \[
    \left(\del_t-\widehat{g}^{ij}\nabla_i\nabla_j\right) (-\Phi_r\phi+\eta)
    \]
    for some candidate bound $\eta$, exactly as in \cite{LeeTamMaxPrinc} (up to changing some constants, which now depend on $\eps$).
    
    The rest of the proof of \cite[Theorem 1.1]{LeeTamMaxPrinc} may continue unchanged, although once more we make the modifications outlined in the proof of Theorem \ref{PROP Lee Tam max princ improved}.
\end{proof}

\printbibliography

\end{document}